\documentclass[reqno]{amsart}
\usepackage{amssymb,latexsym}
\usepackage{amsmath}
\usepackage{graphicx}
\usepackage{amscd}
\usepackage{color}
\usepackage{enumerate}

\numberwithin{equation}{section}
\theoremstyle{plain}
 \newtheorem{theorem}{Theorem}[section]
 \newtheorem{lemma}[theorem]{Lemma}
 \newtheorem{proposition}[theorem]{Proposition}

 \newtheorem{corollary}[theorem]{Corollary}
\theoremstyle{definition}
 
 \newtheorem{remark}[theorem]{Remark}
 \newtheorem{notation}[theorem]{Notation}
 \newtheorem{convention}[theorem]{Convention}
 \newtheorem{example}[theorem]{Example}
 \newtheorem{problem}[theorem]{Problem}
 
 \newtheorem{conjecture}[theorem]{Conjecture}
\theoremstyle{remark}
 \newtheorem{case}{Case}

%
\newcommand \Part[1] {\textup {Part}({#1})}
\newcommand \gnu [1] {\nu(#1)}
\newcommand \grho {p}

\newcommand \ogrho  {\overline p}
\newcommand \isitneeded[1]{}
\newcommand \ovl [1] {\overline #1}
\newcommand \conf {\alpha_\textup{conf}}
\newcommand \zconf {z(\alpha_\textup{conf})}
\newcommand \othv {r} 
\renewcommand \epsilon{\varepsilon}

\newcommand \upint [1] {\lceil #1 \rceil}
\newcommand \loint [1] {\lfloor #1 \rfloor}

\newcommand \zwid [1] {\textup{zwid}(#1)}
\newcommand \wid [1] {\textup{wid}(#1)}
\newcommand \vdim [1] {\textup{dim}(#1)}
\newcommand \zlen [1] {\textup{zlen}(#1)}
\newcommand \len [1] {\textup{len}(#1)}
\newcommand \air [1] {\textup{air}(#1)}
\newcommand  \eff [1] {{\textup{eff}(#1)}}

\newcommand \Sym [1] {\textup{Sym}(#1)}
\newcommand \Stb [1] {\textup{Stb}(#1)}
\newcommand \wextra  {w}
\newcommand \notdiv {\mathrel{\not{\kern-0.3pt|}}}

\newcommand  \Zfi {Z[\vph]}
\newcommand  \Zpfi {Z[\pvph]}
\newcommand  \Zmfi {Z[\mvph]}

\newcommand  \Zppfi[1] {Z[#1]}

\newcommand \inseg[2] {\textup{init}(#1,#2)}



 %
%
\newcommand \stza {a^{\vph}}
\newcommand \stzb {b^{\vph}} 
\newcommand \stzc {c^{\vph}} 
\newcommand \stpza {a^{\pvph}} 
\newcommand \stpzb {b^{\pvph}} 
\newcommand \stpzc {c^{\pvph}} 
\newcommand \stzga {\alpha^{\vph}} 
\newcommand \stzgb {\beta^{\vph}}
\newcommand \stzgc {\gamma^{\vph}}
\newcommand \stzgd {\delta^{\vph}}

\newcommand \stpzga {\alpha^{\pvph}} 
\newcommand \stpzgb {\beta^{\pvph}}
\newcommand \stpzgc {\gamma^{\pvph}}
\newcommand \stpzgd {\delta^{\pvph}}
\newcommand \stpzge {\epsilon^{\pvph}}


\newcommand \stzogb {{\overline\beta}^{\kern 1pt\vph}} 
\newcommand \stpzogb {{\overline\beta}{}^{\kern 1pt\pvph}\kern-2pt } 
\newcommand \stzogc {{\overline\gamma}^{\kern 1pt\vph}} 
\newcommand \stpzogc {{\overline\gamma}{}^{\kern 1pt\pvph}\kern-2pt } 

\newcommand \stzogtwa {{\overline\alpha}_2^{\kern 1pt\vph}} 
\newcommand \stzogtwb {{\overline\beta}_2^{\kern 1pt\vph}} 
\newcommand \stzogtwc {{\overline\gamma}_2^{\kern 1pt\vph}} 
\newcommand \stzogtwd {{\overline\delta}_2^{\kern 1pt\vph}} 

\newcommand \stpzogtwa {{\overline\alpha}_2^{\kern 1pt\pvph}\kern-2pt} 
\newcommand \stpzogtwb {{\overline\beta}_2{}^{\kern -3.5pt{\raisebox{1pt}[0pt]{$\scriptstyle\pvph$}}\kern-2pt}} 
\newcommand \stpzogthb {{\overline\beta}_3{}^{\kern -3.5pt{\raisebox{1pt}[0pt]{$\scriptstyle\pvph$}}\kern-2pt}} 

\newcommand \stpzogtwc {{\overline\gamma}_2^{\kern 1pt\pvph}\kern-2pt} 
\newcommand \stpzogthc {{\overline\gamma}_3^{\kern 1pt\pvph}\kern-2pt}

\newcommand \stpzogthd {{\overline\delta}_3{}^{\kern -3pt{\raisebox{1.5pt}[0pt]{$\scriptstyle\pvph$}}\kern-2pt}}

\newcommand \stzogtwbmu {{\bmu}_2^{\kern 1pt\vph}} 
\newcommand \stpzogtwobmu {{\obmu}_2^{\kern 1pt\pvph}\kern-2pt}

\newcommand \stzf {f^{\vph}} 
\newcommand \stpzf {f^{\pvph}} 
\newcommand \stzbg {^\bullet\kern-2pt g^{\vph}} 
\newcommand \stzclubg {^\clubsuit\kern-2pt g^{\vph}} 
\newcommand \stpzbg {^\bullet\kern-2pt g^{\pvph}\kern-2pt } 

\newcommand \stzotbmu{{\overline{\widetilde{\pmb\mu}}_2{}^{\kern -3.5pt{\raisebox{1pt}[0pt]{$\scriptstyle\vph$}}\kern-0.5pt}}}

\newcommand \stpzotbmu{{\overline{\widetilde{\pmb\mu}}_2{}^{\kern -3.5pt{\raisebox{1pt}[0pt]{$\scriptstyle\pvph$}}\kern-2pt}}}

\newcommand \stpzclubg {^\clubsuit\kern-2pt g^{\pvph}} 

\newcommand \stmza {a^{\mvph}} 
\newcommand \stmzb {b^{\mvph}}

\newcommand \stzg {g^{\vph}} 
\newcommand \stpzg {g^{\pvph}} 
\newcommand \stzdotsg {\ddot g^{\kern0.8pt\vph}} 
\newcommand \stpzdotsg {\ddot g^{\kern0.8pt\pvph}} 
\newcommand \stzkg {\widehat g^{\kern1.8pt\vph}} 
\newcommand \stpzkg {\widehat g^{\kern1.8pt\pvph}} 

\newcommand \stzkf {\widehat f^{\kern1.8pt\vph}} 
\newcommand \stzbmu {{\bmu}^{\vph}} 
\newcommand \stzI {I^{\vph}} 
\newcommand \stzeuv [2] {e^{\vph}_{#1,#2}} 
\newcommand \stpzeuv [2] {e^{\pvph}_{#1,#2}} 
\newcommand \stpzclubeuv [2] {{}^\clubsuit\kern-2pt e^{\pvph}_{#1,#2}} 
\newcommand \stzclubeuv [2] {{}^\bullet\kern-2pt e^{\pvph\kern-2pt,\kern1pt\vph}_{#1,#2}} 
\newcommand \stzzclubeuv [2] {{}^\bullet\kern-2pt e^{\vph\kern0pt,\kern1pt\vph}_{#1,#2}}

\newcommand \stmzeuv [2] {e^{\mvph}_{#1,#2}} 
\newcommand \stzteuv [2] {\widetilde e^{\kern 1pt\vph}_{#1,#2}}

\newcommand \restrict[2] {{#1\rceil_{#2}}}
\newcommand \sublat[1] {[#1]_{\kern-1pt\textup{lat}}}

\newcommand \lbrak {[\hskip-1.5pt[}
\newcommand \rbrak {\hskip0.5pt]\hskip-1.5pt]}
\newcommand \equ[2]{\lbrak{}#1,#2\rbrak{}^{\kern-0.5pt{
\scriptscriptstyle\textup{e}}}}
\newcommand \kequ[1]{\lbrak{}#1\rbrak{}^{\kern-0.5pt{\scriptscriptstyle\textup{e}}}}
\newcommand \xequ[3]{\lbrak{}\langle #1,#2\rangle : #3\rbrak{}^{\kern-0.5pt{\scriptscriptstyle\textup{e}}}}
\newcommand \faequ[2]{\lbrak{}#1,#2\rbrak{}^{\kern-0.5pt{\scriptscriptstyle\textup{e}}}_{\kern-0.5pt\scriptscriptstyle\forall}}
\newcommand \vonal {\noalign{\hrule}}
\newcommand \tuple [1] {\langle #1 \rangle}
\newcommand \pair [2] {\tuple{#1,#2}}
\newcommand \oal{\overline\alpha}
\newcommand \obe{\overline\beta}
\newcommand \oga{\overline\gamma}
\newcommand \ode{\overline\delta}
\newcommand \valpha{\vec\alpha}
\newcommand \vbeta{\vec\beta}
\newcommand \vgamma{\vec\gamma}
\newcommand \vdelta{\vec\delta}
\newcommand \stxal[3]{{\alpha^{\pair{#1}{#2}}[#3\kern 0.8pt]}}
\newcommand \stxbe[3]{{\beta^{\pair{#1}{#2}}[#3\kern 0.8pt]}}
\newcommand \stxga[3]{{\gamma^{\pair{#1}{#2}}[#3\kern 0.8pt]}}
\newcommand \stxde[3]{{\delta^{\pair{#1}{#2}}[#3\kern 0.8pt]}}
\newcommand \stxn[3]{{n^{\pair{#1}{#2}}[#3\kern 0.8pt]}}
\newcommand \stxm[3]{{m^{\pair{#1}{#2}}[#3\kern 0.8pt]}}
\newcommand \stxk[3]{{k^{\pair{#1}{#2}}[#3\kern 0.8pt]}}
\newcommand \vbmu{{{\vec{\pmb\mu}\kern 0pt ^\ast}}}
\newcommand \DBV[2]{\textup{DBV}(#1,#2) }
\newcommand \sba[2]{\textup{sba}(#1,#2) }
\newcommand \TRA[2]{\textup{TRA}(#1,#2) }
\newcommand \tra[2]{\textup{tra}(#1,#2) }

\newcommand \obmu{{\overline{\pmb\mu}}}

%

\newcommand \bomu{{^\ast\kern0pt\overline{\pmb\mu}}}
\newcommand \bmu{{{\pmb\mu}}}

\newcommand \cxbmu [3]{{{\pmb\mu}}^{\pair{#1}{#2}}[#3\kern0.8pt]}

\newcommand \rbmu{{^\ast\kern-1.5pt{\pmb\mu}}}

\newcommand \lev {\kern 1pt|}

\newcommand \hkey[2] {h_{#1}[#2\kern 1pt]}
\newcommand \ehkey[4] {h^{\pair{#1}{#2}}_{#3}[#4\kern 1pt]}

\newcommand \rftrm[1] {{^\ast\kern-2pt f_{#1}}}

\newcommand \sltrm[1]  {g^{\textup{left}}}
\newcommand \srtrm[1]  {g^{\textup{right}}}
\newcommand \keterm [2] {\widehat e_{#1,#2}}

\newcommand \kecsterm [2]{{\widehat e_{#1,#2}}^{\kern 2pt\clubsuit}}
\newcommand \kgcsterm [2]{{\widehat g_{#1,#2}}^{\kern 2pt\clubsuit}}
\newcommand \kfcsterm [2]{{f^{\kern 0pt\clubsuit\clubsuit}_{#1,#2}}}

\newcommand \enul {\Delta}

\newcommand \NN {{\mathbb N^+}}
\newcommand \Nnul {{\mathbb N_0}}

\newcommand \Bell[1] {\textup{Bell}(#1)}

\newcommand \leqref [1] {\overset{\eqref{#1}}{\leq}}
\newcommand \eeqref[1]{\overset{\eqref{#1}}{=}}

\newcommand \pvec[1] {{\vec{#1}\,\pmb'}}

\newcommand \ppvec[1] {{\vec{#1}\kern 1pt''}}

\newcommand \ffree[1] {\textup{FL}(3)}

\newcommand \lideal[1]{\mathord{\downarrow}_{\kern-1pt#1}}
\newcommand \lfilter[1]{\mathord{\uparrow}_{\kern-1pt#1}}

\renewcommand\phi{\varphi}
\newcommand \vph{{\boldsymbol\phi}}
\newcommand \pvph{{\boldsymbol{\phi'}}}
\newcommand \mvph{{\boldsymbol{\phi^{\kern-1pt\mathord-}}}}

\newcommand \tbf [1] {\textbf{#1}} 
\newcommand \set[1] {\{#1\}}
\newcommand \Equ[1] {\textup{Equ}(#1)}

\renewcommand \phi {\varphi}


\newcommand \red [1] {{\color{red}#1\color{black}}}

\newcommand \nothing [1] {}
\newcommand \magenta [1] {{\color{magenta}#1\color{black}}}

%
%
\begin{document}
\title[Generating partition lattices and their direct products] 
{Four-element generating sets of partition lattices and their direct products}

%
\author[G.\ Cz\'edli]{G\'abor Cz\'edli}
\email{czedli@math.u-szeged.hu}
\urladdr{http://www.math.u-szeged.hu/\textasciitilde{}czedli/}
\address{University of Szeged, Bolyai Institute, 
Szeged, Aradi v\'ertan\'uk tere 1, HUNGARY 6720}

\author[L.\ Oluoch]{Lillian Oluoch}
\email{lillybenns@yahoo.com}
\urladdr{http://www.math.u-szeged.hu/\textasciitilde{}oluoch/}
\address{University of Szeged, Bolyai Institute, 
Szeged, Aradi v\'ertan\'uk tere 1, HUNGARY 6720}

\thanks{This research of the first author is supported by  NFSR of Hungary (OTKA), grant
number K 134851}

\dedicatory{Dedicated to the memory of our colleague, Professor Gyula Pap $(\kern-1pt$1954--2019$\kern1pt)$
}

\date{\hfill {\tiny{\magenta{(\tbf{\underline{Always}} check the  authors' websites for possible updates!) }}}\  \red{June 24, 2020}}

\subjclass[2000] {06B99, 06C10}

\keywords{Equivalence lattice, partition lattice, four-element generating set, sublattice, statistics, computer algebra, computer program, direct product of lattices, generating partition lattices, semimodular lattice, geometric lattice}

\begin{abstract} 
Let $n>3$ be a natural number.
By a 1975 result of H. Strietz,  the lattice  Part$(n)$ of all partitions of an $n$-element set  has a four-element generating set.
In 1983, L.\ Z\'adori gave a new proof of this fact with a particularly elegant construction. Based on his construction from 1983, the present paper  gives a lower bound on the number $\gnu n$ of four-element  generating sets of Part$(n)$. 
We also present a computer assisted statistical approach to $\gnu n$ for small values of $n$. 

In his 1983 paper, L.\ Z\'adori also proved that for $n\geq 7$,
the lattice Part$(n)$ has a four element generating set that is not an antichain. He left the problem whether such a  generating set for $n\in\set{5,6}$ exists open. Here we solve this problem in negative for $n=5$ and in affirmative for $n=6$. 

Finally, the main theorem asserts that the direct product of some powers of partition lattices is four-generated. In particular, by the first part of this theorem,  $\Part{n_1}\times\Part{n_2}$ is four-generated for any two distinct integers $n_1$ and $n_2$ that are at least 5. The second part of the theorem is technical but it has two corollaries that are easy to understand. Namely, the direct product $\Part{n}\times \Part{n+1}\times\dots\times \Part {3n-14}$ is four-generated for each integer $n\geq 9$. Also, for every positive integer $u$, the $u$-th the direct power of the direct product 
$\Part{n}\times \Part{n+1}\times\dots\times \Part{n+u-1}$ is four-generated for all but finitely many $n$. If we do not insist on too many direct factors, then the exponent can be quite large. For example, our  theorem implies that the $ 10^{127}$-th direct power of $\Part{1011}\times \Part{1012}\times \dots \times\Part{2020}$ is four-generated. 
\end{abstract}

\maketitle
\section{Introduction}\label{sectintro}
\subsection*{Goal}
For a non-empty set $A$, let $\Part A$ denote the lattice of all partitions of $A$. Also, for a positive integer $n$, let $\Part n$ stand for  $\Part{\set{1,2,\dots,n}}$. Note that $\Part n$ is a semimodular and, in fact, a geometric lattice, which occurs frequently in lattice theory, universal algebra, and combinatorics.
We know from Strietz~\cite{strietz75,strietz77} that for $n\geq 3$, $\Part n$ is \emph{four-generated}, that is, it has a four-element generating set. 
We also know that $\Part n$ is not three-generated provided $n\geq 4$. A four-element generating set, different from and, in fact, more elegant than Strietz's, was found by Z\'adori~\cite{zadori}. Implicitly, some other four-element generating sets occur in 
Cz\'edli~\cite{czedli4gen,czsmall,czedli112gen}, which deal with infinite partition lattices. Hence, taking into account that $\Part n$ has $n!$ many automorphisms (those induced by the permutations of $\set{1,\dots,n}$) and that automorphisms send generating sets to generating sets, it is clear that $\Part n$ has \emph{many} four-element generating sets. It is natural to ask how many. So,  for $n\in\NN:=\set{1,2,3,\dots}$, 
\begin{equation}\left.
\parbox{6.5cm}{let us denote the number of four-element generating sets of $\Part n$ by $\gnu n$;}\,\,\right\}
\label{pbxgnujl}
\end{equation}
our first goal is to investigate this number. In addition to the fact that it is generally reasonable to enumerate our interesting objects in lattice theory, another ingredient of our motivation is given in an earlier and somewhat parallel   paper, Cz\'edli~\cite[last section]{czgparallel}. 
When dealing with $\gnu n$, it causes some difficulty that  $\Part n$ has very many elements and a complicated structure; for example, each finite lattice is embeddable in $\Part n$ for some $n$ by a classical result of Pudl\'ak and  T\r uma~\cite{pudltum}. This explains that, instead of determining the exact or the asymptotic value of $\gnu n$, we can only give a lower bound for $\gnu n$. This lower bound is better than what would trivially follow from Z\'adori~\cite{zadori} and Cz\'edli~\cite{czedli4gen,czsmall,czedli112gen} with $n!$ automorphisms taken into account; see above, before \eqref{pbxgnujl}. We prove the validity of our lower bound by constructing involved four-element generating sets of $\Part n$; these sets are  motivated by but differ from those occurring in the afore-mentioned papers. 

Since the lower bound for $\gnu n$ that we are able to prove is not sharp, we have developed some computer programs for investigating $\gnu n$ for some small values of $n$. The results obtained in this way are analyzed with the usual methods of mathematical statistics.

As an immediate use of our efforts to estimate $\gnu n$, we have found and proved our main result, Theorem~\ref{thmmain}, which states that certain direct products of direct powers of partitions lattices are still four-generated. For example, for any integers $5\leq n_1<n_2$, the direct product $\Part{n_1}\times \Part{n_2}$ is four-generated. If the reader decides to jump ahead to cast a quick glance at the technical second part of the main result,  he is advised to look at Corollaries~\ref{coroltconsecutive} and \ref{corolmanyfactors} first. Note that the direct products in the main result are geometric lattices; indeed, they are semimodular for example by Cz\'edli and Walendziak~\cite{czedliwalendziak}, and they are clearly atomistic.

\subsection*{Outline}
In Section~\ref{section112}, we recall some basic facts and definitions,  and we fix some notation and convention used in the whole paper. Also, Propositions~\ref{propsTsx} and \ref{propfVjh} solve Z\'adori's problem about the existence of such a four-element generating set of $\Part 5$ and $\Part 6$ that is not an antichain. For 
 $5\leq n\in\NN:=\set{1,2,3,\dots}$, Section~\ref{sectlowbnd} gives a lower bound of the  number  $\gnu n$ of four-element generating sets of $\Part n$. 
Section~\ref{sectdirprod} formulates and  proves Theorem~\ref{thmmain} about the existence of four-element generating sets in certain direct products of direct powers of partitions lattices; Corollaries~\ref{coroltconsecutive} and \ref{corolmanyfactors} as well as Example~\ref{example2020} are worth separate mentioning here. Finally, Section~\ref{sectstatcomp} outlines what our computer programs do, gives some facts achieved by these programs, and draws some conclusions and conjectures supported by mathematical statistics.

\subsection*{Joint authorship} It is not common that a lattice theorist and an expert of mathematical statistics work jointly. This unusual circumstance explains that we give some details of the authors' contributions. The two purely lattice theoretical sections, Sections~\ref{sectlowbnd} and \ref{sectdirprod}, are due to the first author. Sections~\ref{section112} and \ref{sectstatcomp}, that is, both sections that benefit from computers, are joint work, and so are most of the data obtained by computers. The statistical analysis of the data in Section  \ref{sectstatcomp} is due to the second author.

Let us take the opportunity to draw a general conclusion of this sort of joint work. Sometimes, research in lattice theory as well as in some other fields of mathematics produces numerical data. Even if statistics is not a substitute for a mathematical proof, it can give confidence in conjectures and in directions we try to go further.

\section{$(1+1+2)$-generation}\label{section112}
We know from  Strietz~\cite{strietz75,strietz77} and  Z\'adori~\cite{zadori} that, for $n\geq 4$, if $\Part n$ has a four-element generating set, then this generating set is a so-called $(1+1+2)$-poset, that is, exactly two of the four generators are comparable. If $\Part n$ has such a generating set, then we say that $\Part n$ has a \emph{$(1+1+2)$-generating set}.
The purpose of this section is to prove the following two statements, which solve a problem raised by Z\'adori~\cite{zadori}

\begin{proposition}\label{propsTsx}
The partition lattice $\Part 6$ has a $(1+1+2)$-generating set.
\end{proposition}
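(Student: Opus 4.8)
The plan is to exhibit four explicit partitions $A,B,C,D$ of the set $\set{1,2,3,4,5,6}$ that form a $(1+1+2)$-poset --- say with $C<D$ and with no other two of the four comparable --- and then to check that the sublattice $\langle A,B,C,D\rangle$ they generate is all of $\Part 6$.

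I would begin with two standard reductions. Since $\Part 6$ is atomistic, every partition is the join of the atoms below it, so a sublattice that contains all $\binom 62=15$ atoms of $\Part 6$ (the partitions whose only nontrivial block has exactly two elements) is automatically the whole of $\Part 6$: it then also contains $\widehat 0$, the meet of any two distinct atoms, and every partition is a join of atoms. Second, atoms are easy to handle by the usual partition manipulations. The join of two atoms sharing a point, $\set{a,b}\vee\set{b,c}$, is the partition whose only nontrivial block is $\set{a,b,c}$, and more generally a join of atoms is the partition whose blocks are the connected components of the associated graph on $\set{1,\dots,6}$; conversely, an atom can be carved out by a meet, for instance if a partition $\sigma$ has a block $\set{a,b,c}$ and a partition $\tau$ keeps $a$ and $b$ together but separates $c$ from them, then $\sigma\wedge\tau$ is the atom $\set{a,b}$. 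Thus the verification reduces to writing down, for each of the $15$ atoms, a lattice term in $A,B,C,D$ that evaluates to it; once all atoms lie in $\langle A,B,C,D\rangle$, joining suitable atoms recovers every element of $\Part 6$.

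The construction should be arranged so that a few short terms already produce a spanning family of atoms, from which the rest follow. Concretely, I would try taking $A$ and $B$ to be two matchings of $\set{1,\dots,6}$ in general position and $C<D$ a short chain of coarse partitions whose meets against joins of $A$ and $B$ peel off the remaining edges; this is in the spirit of Z\'adori's~\cite{zadori} treatment of the non-antichain case for $n\ge 7$, but it has to spend the two comparable generators frugally. Because $n=6$ is small there is very little room to manoeuvre, so selecting $A,B,C,D$ is the delicate step, and it is the one most naturally delegated to a computer: search the four-element $(1+1+2)$-subsets of $\Part 6$ and test which of them generate. Having located a working quadruple, the write-up records the explicit $A,B,C,D$ together with the explicit terms for the $15$ atoms, and what remains is a finite check.

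The main obstacle is precisely this tightness at $n=6$: in contrast with the case $n\ge 7$, imposing the comparability $C<D$ removes so much freedom that most natural quadruples fail to generate --- indeed, the companion negative result for $n=5$ says that no such quadruple exists there at all --- so the real content of the proposition is the \emph{existence} of a suitable quadruple. Once one is exhibited, the proof is the routine, though not short, verification that all $15$ atoms lie in the generated sublattice.
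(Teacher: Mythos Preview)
Your proposal is a plan, not a proof. You correctly identify that the entire content of the proposition is the \emph{existence} of a working quadruple, and you correctly outline the verification strategy (reduce to atoms, since $\Part 6$ is atomistic). But you then stop short of producing the quadruple: you say you ``would try'' matchings for $A,B$ and a short chain for $C<D$, that this ``is the one most naturally delegated to a computer,'' and that ``having located a working quadruple'' one records it and checks. None of that locating or recording happens. Since, as you yourself note, the $n=5$ case fails entirely and $n=6$ is tight, deferring the construction to an unspecified search leaves the proof with no content.

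The paper's proof does exactly what your plan calls for but actually carries it out, and with two differences worth noting. First, the explicit generators are not matchings: with $\beta>\alpha$ the comparable pair, the paper takes $\alpha$ with a single $3$-block $\{u_4,u_5,u_6\}$, $\beta$ with the two $3$-blocks $\{u_1,u_2,u_3\}$ and $\{u_4,u_5,u_6\}$, and $\gamma,\delta$ each with one $3$-block and one $2$-block. Second, the verification is much shorter than checking all fifteen atoms by hand: six atoms forming a Hamiltonian cycle $\langle u_1,u_3,u_6,u_5,u_4,u_2\rangle$ are produced by short explicit terms (each a meet of one generator with a join involving a previously obtained atom), and then the Circle Principle (Lemma~\ref{lemmaHamilt}) immediately yields all remaining atoms. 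If you revise, either exhibit your quadruple and the terms, or adopt the paper's quadruple and use the cycle trick to keep the verification to half a page.
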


\begin{proposition}\label{propfVjh}
Every four-element generating set of $\Part 5$ is an antichain. Hence,  $\Part 5$ has no $(1+1+2)$-generating set.
\end{proposition}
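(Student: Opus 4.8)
The plan is to argue by contradiction and reduce the assertion to a finite verification; I do not expect a purely structural proof, the more so since Proposition~\ref{propsTsx} shows the analogous statement \emph{fails} for $\Part 6$, so whatever obstructs $n=5$ is a genuine small-$n$ accident rather than something visible from the general theory. Thus suppose $G=\{a,b,c,d\}$ generates $\Part 5$ and is a $(1+1+2)$-poset, with $c<d$ its only comparable pair. The first thing I would record is that none of $a,b,c,d$ equals $\hat 0$ or $\hat 1$: if, say, one of them is $\hat 0$, let $G'$ consist of the other three; adjoining $\hat 0$ to a sublattice introduces no further new element, so $\langle G\rangle=\langle G'\rangle\cup\{\hat 0\}$, whence either $\langle G'\rangle=\Part 5$, contradicting that $\Part 5$ is not three-generated, or $\langle G'\rangle=\Part 5\setminus\{\hat 0\}$, which is impossible because that set is not $\wedge$-closed (any two atoms meet to $\hat 0$). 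The dual argument excludes $\hat 1$. Hence $c,d$, and likewise $a,b$, lie in the three middle ranks $1,2,3$ of the length-$4$ geometric lattice $\Part 5$, and $\operatorname{rank}(c)<\operatorname{rank}(d)$.

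Next I would use $\operatorname{Aut}(\Part 5)\cong S_5$ (order $120$) to normalize the comparable pair. Up to this action, a pair $c<d$ in the middle ranks is determined by the partition types of $c$ and $d$ together with their relative position, and a short bookkeeping by type shows there are only nine such pairs up to symmetry. For each of these nine representatives $(c,d)$ I would enumerate every pair $\{a,b\}$ of middle-rank partitions for which $\{a,b,c,d\}$ is a $(1+1+2)$-poset (i.e.\ $a,b$ are incomparable to one another and to each of $c,d$), compute the generated sublattice $\langle a,b,c,d\rangle$, and check that it is a proper subset of $\Part 5$. Since $\Part 5$ has only $52$ elements the sublattice closure is cheap, and the number of quadruples is only of the order of a few thousand, so this search is routine by computer, as befits this section. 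Once every such quadruple is seen to generate a proper sublattice, no $(1+1+2)$-poset generates $\Part 5$; combined with the recalled result of Strietz and Z\'adori that a four-element generating set of $\Part n$ with $n\geq 4$ has at most one comparable pair, i.e.\ is an antichain or a $(1+1+2)$-poset, this yields that every four-element generating set of $\Part 5$ is an antichain, which is the proposition.

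The only real obstacle is the absence of a conceptual shortcut, which forces the argument onto the exhaustive check; its feasibility rests entirely on $\Part 5$ being small. If one wanted to pare the few-thousand cases down to a hand check, the natural levers are: $\langle a,b,c,d\rangle$ must contain all $10$ atoms and all $15$ coatoms of $\Part 5$; the meet-irreducible elements of $\Part 5$ are precisely its coatoms and the join-irreducibles precisely its atoms (since $\Part 5$ is atomistic and, dually, coatomistic); and an atom lying in the sublattice is either one of the four generators or a meet of two strictly larger sublattice elements, with the dual statement for coatoms. Exploiting these could in principle shrink the verification, but for the present proposition the bounded computer search suffices.
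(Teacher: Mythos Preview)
Your approach is correct, and like the paper's it ultimately rests on computer verification, but the routes differ. The paper attempts no reduction at all: it simply lists by program all four-element generating sets of $\Part 5$ (there are exactly $5305$) and then checks by a second program that each of these $5305$ sets is an antichain. Your route is more structured: you first invoke the Strietz--Z\'adori dichotomy to reduce to excluding $(1+1+2)$-generating sets, then argue conceptually that $0,1\notin G$, then exploit the $S_5$-action to cut the comparable pair $(c,d)$ down to nine orbit representatives, and only then run a smaller computer search over the admissible pairs $\{a,b\}$. What your approach buys is a much smaller search space and a modicum of structural insight (the $0/1$-exclusion and the symmetry reduction are genuine, if easy, simplifications); what the paper's approach buys is self-containment, since the brute-force enumeration establishes the first sentence of the proposition directly, from which the second is immediate, without needing the antichain-or-$(1+1+2)$ dichotomy. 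One small caution on your side: that dichotomy is precisely what lets you ignore four-element posets with two or more comparable pairs, so be sure the exact form you need is actually available in the Strietz and Z\'adori references you cite.
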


\begin{figure}[ht]
\centerline
{\includegraphics[scale=0.9]{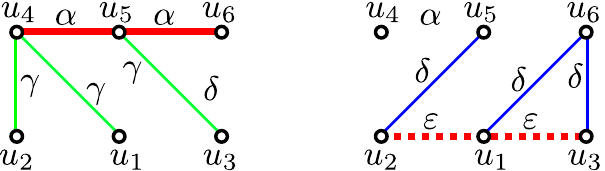}}
\caption{A $(1+1+2)$-generating set for $\Equ 6$}
\label{figequ6112}
\end{figure}

As usual, associated with a partition $U$ of $A$, we define an \emph{equivalence relation} $\pi_U$ of $A$ as the collection of all pairs $(x,y)\in A^2$ such that $x$ and $y$ belong to the same block of $U$. As it is well known, the equivalence relations and the partitions of $A$ mutually determine each other, and $\pi_U\leq \pi_V$ if and only if $U\leq V$. 
(Here $\pi_U\leq \pi_V$ means that $\pi_U\subseteq \pi_V$ as sets of pairs of elements of $A$.)
Hence, the \emph{lattice $\Equ A$ of all equivalence relations} of $A$ (in short, the \emph{equivalence lattice} of $A$) is isomorphic to $\Part A$. In what follows, we do not make a sharp distinction between a partition and the corresponding equivalence relation; no matter which of them is given, we can use the other one without warning.
Typically, we speak of \emph{partitions lattices} in the main statements but we prefer to speak of \emph{equivalence lattices} in the proofs.

\begin{convention}\label{conventionsgRsktWph}
Almost exclusively, we are going to define our equivalence relations and the corresponding partitions by (undirected edge-coloured)  graphs; multiple edges are allowed. Even when we give an equivalence relation by a defining equation, there is a graph in the background. 
 Each edge of the graph is colored by one of the colors $\alpha$, $\beta$, $\gamma$,  $\delta$, and $\epsilon$. 
On the vertex set of such a graph $A$, Figure~\ref{figequ6112} and the other figures in the paper define an \emph{equivalence} (relation) $\alpha\in \Equ A$ in the following way: deleting all edges but the $\alpha$-colored ones, the components of the remaining graph are the blocks of the partition associated with $\alpha$. In other words, $\pair x y\in\alpha$ if and only if there is an $\alpha$-coloured path from vertex $x$ to vertex $y$ in the graph, that is, a path (of possibly zero length) all of whose edges are $\alpha$-colored. The equivalences $\beta$, $\gamma$, $\delta$, and (sometimes) $\epsilon$ are defined analogously.
\end{convention}

\begin{notation} Following Cz\'edli~\cite{czgparallel}, we adopt the following notation. Assume that $A$ is a base set and we are interested in its partitions or, equivalently, in its equivalence relations.
For elements $u_1,\dots,u_k$ of $A$, the partition of $A$ with block $\set{u_1,\dots, u_k}$ such that all the other blocks are singletons will be denoted by
\[\kequ{u_1,\dots u_k}.
\] 
Usually but not always, the elements $u_1,\dots, u_k$ are assumed to be pairwise distinct. Note that $\kequ{u_1,u_1}$ is $\enul$, the least equivalence relation of $A$, that is, the zero element of $\Equ A$. 
For $\kappa,\lambda\in \Equ A$, the \emph{meet} and the \emph{join} of $\kappa$ and $\lambda$, denoted by $\kappa \lambda$ (or $\kappa\cdot\lambda$) and $\kappa+\lambda$, are the intersection and the transitive hull of the union of $\kappa$ and $\lambda$, respectively.  The usual precedence rules apply; for example, $xy+xz$ stands for $(x\wedge y)\vee (x\wedge z)$. 
\emph{Lattice terms} are composed from variables and join and meet operation signs in the usual way; for example, $f(x_1,x_2,x_3,x_4)=x_1(x_3+x_4)+(x_1+x_3)x_4$ is a quaternary lattice term. 
Given a lattice $L$ and $u_1,\dots, u_k\in L$, the \emph{sublattice generated} by $\set{u_1,\dots,u_k}$ is denoted and defined by 
\begin{equation}
\sublat{u_1,\dots,u_k}:=\set{f(u_1,\dots,u_k): u_1,\dots,u_k\in L,\,\,f\text{ is a lattice term}}.
\end{equation}
\end{notation}

Our arguments will often use the following technical lemma from Z\'adori \cite{zadori}, which has been used also in Cz\'edli \cite{czedli4gen,czsmall,czedli112gen} and in some other papers like Kulin~\cite{kulin}. Note that the proof of this lemma is straightforward.

\begin{lemma}[``Circle Principle'']\label{lemmaHamilt}
Let  $d_0,d_1,\dots,d_{t-1}$ be pairwise distinct elements of a set $A$. 
Then, for any $0\leq i<j\leq t-1$ and in the lattice $\Equ A$, 
\begin{equation}
\begin{aligned}
\equ {d_i}{d_j}=\bigl(\equ{d_{i}}{d_{i+1}} + \equ{d_{i+1}}{d_{i+2}}\dots + \equ{d_{j-1}}{d_{j}} \bigr) \cdot
 \bigl( \equ{d_{j}}{d_{j+1}} 
\cr
+ \dots + \equ{d_{t-2}}{d_{t-1}} +
\equ{d_{t-1}}{d_{0}}
 + \equ{d_{0}}{d_{1}}+ \dots + \equ{d_{i-1}}{d_{i}} \bigr).
\end{aligned}
\label{eqGbVrTslcdNssm}
\end{equation}
Consequently, $\equ {d_i}{d_j}\in
\sublat{\, \equ {d_0}{d_1}, \equ{d_1}{d_2},\dots, \equ{d_{t-2}}{d_{t-1}}, \equ{d_{t-1}}{d_0}\, }$.
\end{lemma}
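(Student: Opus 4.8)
The plan is to evaluate the two joins on the right-hand side of \eqref{eqGbVrTslcdNssm} as explicit partitions via the graph description of Convention~\ref{conventionsgRsktWph}, and then to observe that their meet is forced. Recall that the join of a family of equivalence relations is the transitive hull of their union, so the first factor $P:=\equ{d_i}{d_{i+1}}+\dots+\equ{d_{j-1}}{d_j}$ is precisely the equivalence whose blocks are the connected components of the graph on $A$ having exactly the edges $d_id_{i+1},\dots,d_{j-1}d_j$. Since $d_0,\dots,d_{t-1}$ are pairwise distinct, that graph is a single path through $d_i,d_{i+1},\dots,d_j$ together with isolated vertices; hence $P=\kequ{d_i,d_{i+1},\dots,d_j}$, whose unique non-singleton block is $B_P:=\set{d_i,d_{i+1},\dots,d_j}$.

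In exactly the same way, the second factor $Q:=\equ{d_j}{d_{j+1}}+\dots+\equ{d_{t-2}}{d_{t-1}}+\equ{d_{t-1}}{d_0}+\equ{d_0}{d_1}+\dots+\equ{d_{i-1}}{d_i}$ corresponds to the path that wraps around the cycle from $d_j$ through $d_{t-1}$ and $d_0$ to $d_i$; thus $Q=\kequ{d_j,d_{j+1},\dots,d_{t-1},d_0,d_1,\dots,d_i}$, whose unique non-singleton block is $B_Q:=\set{d_j,\dots,d_{t-1},d_0,\dots,d_i}$. When $i=0$ the tail $\equ{d_0}{d_1}+\dots+\equ{d_{i-1}}{d_i}$ is empty, when $j=t-1$ the part $\equ{d_j}{d_{j+1}}+\dots+\equ{d_{t-2}}{d_{t-1}}$ is empty, and when $t=2$ both are; in each of these degenerate cases the description of $B_Q$ is unchanged.

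Next I would compute the meet $P\cdot Q$, which by definition is the set-theoretic intersection of $P$ and $Q$ viewed as sets of pairs. A pair $(x,y)$ with $x\neq y$ lies in $P\cap Q$ iff $\set{x,y}\subseteq B_P$ and $\set{x,y}\subseteq B_Q$, i.e.\ iff $\set{x,y}\subseteq B_P\cap B_Q$. Since the $d_k$ are pairwise distinct, $B_P\cap B_Q=\set{d_i,d_j}$, so the only such pair is $\set{x,y}=\set{d_i,d_j}$. Hence $P\cdot Q=\kequ{d_i,d_j}=\equ{d_i}{d_j}$, which is \eqref{eqGbVrTslcdNssm}.

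For the ``Consequently'' clause it then suffices to note that the right-hand side of \eqref{eqGbVrTslcdNssm} is assembled by finitely many joins and meets from the elements $\equ{d_0}{d_1},\equ{d_1}{d_2},\dots,\equ{d_{t-2}}{d_{t-1}},\equ{d_{t-1}}{d_0}$ alone, so $\equ{d_i}{d_j}\in\sublat{\equ{d_0}{d_1},\dots,\equ{d_{t-1}}{d_0}}$. There is no genuine obstacle in this argument: the whole proof is bookkeeping, and the only point demanding a little care is keeping the cyclic indexing straight, in particular checking that the degenerate ranges for $i$, $j$, and $t$ do not disturb the identification of the blocks $B_P$ and $B_Q$.
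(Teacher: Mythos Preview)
Your proof is correct and is exactly the straightforward computation the paper has in mind; indeed, the paper omits the proof entirely, noting only that it is straightforward. Your identification of the two joins as $\kequ{d_i,\dots,d_j}$ and $\kequ{d_j,\dots,d_{t-1},d_0,\dots,d_i}$ and the observation that their intersection has $\set{d_i,d_j}$ as its only non-singleton block is precisely the intended argument.
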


For later reference, note the following. If all the joinands (formally, the summands) in \eqref{eqGbVrTslcdNssm} are 
substitution values of appropriate quaternary terms, then so is $\equ {d_i}{d_j}$ of a longer quaternary term, which is defined according to \eqref{eqGbVrTslcdNssm} and 
\begin{equation}
\text{which we denote by $\keterm {d_i}{d_j}$.}
\label{eqtxthszKthTdtrDpcRvszQ}
\end{equation}

Armed with our conventions and notations, the first proof in the paper runs as follows.

\begin{proof}[Proof of Proposition~\ref{propsTsx}] Let  $A:=\set{u_1,u_2,\dots,u_6}$. Figure~\ref{figequ6112},  according to Convention~\ref{conventionsgRsktWph}, indicates that we consider the following equivalences of $A$
\begin{equation}
\begin{aligned}
\alpha&=\kequ{u_4,u_5,u_6}, &\gamma&=\kequ{u_1,u_2,u_4}+\equ{u_3}{u_5}\cr
 \epsilon&=\kequ{u_1,u_2,u_3},&  \delta&=\kequ{u_1,u_3,u_6}+\equ{u_2}{u_5},
\end{aligned}
\end{equation}
and let $\beta:=\alpha+\epsilon$. Since $\alpha<\beta$, the set $X:=\set{\alpha,\beta,\gamma,\delta}$ is of order type $1+1+2$.
Let $S$ be the sublattice generated by $X$; we are going to show that $X=\Equ A$. Observe that
\begin{align*}
&\equ{u_2}{u_1}=\beta\gamma\in S, && \equ{u_1}{u_3}=\beta\delta\in S,\cr
&\equ{u_5}{u_4}=\alpha(\gamma+\equ{u_1}{u_3})\in S, &&
\equ{u_6}{u_5}=\alpha(\delta+\equ{u_2}{u_1})\in S,
\cr
&\equ{u_4}{u_2}=\gamma(\delta + \equ{u_5}{u_4})\in S, \text{ and} && \equ{u_3}{u_6}=\delta(\gamma+\equ{u_6}{u_5})\in S.
\end{align*}
Hence, Lemma~\ref{lemmaHamilt} applies to the circle $\tuple{u_1,u_3,u_6,u_5,u_4,u_2}$, and we obtain that all atoms of $\Part A$ are in $S$. But $\Part A$ is an \emph{atomistic lattice}, that is, each of its elements is the join of some atoms; this 
 completes  the proof of Proposition~\ref{propsTsx}.\end{proof}

\begin{proof}[Proof of Proposition~\ref{propfVjh}]
Unfortunately, we have no elegant proof. However, we have  computer programs\footnote{{These programs are available from the authors' websites}} that list all four-element generating sets of $\Part 5$; there are exactly 5305 such sets.  And we have another program that checks if these 5305 sets are antichains. The application of this program completes the proof.
\end{proof}

\section{A lower bound on the number of four-element generating sets}\label{sectlowbnd}
First of all, we extend Convention~\ref{conventionsgRsktWph} by the following one.

\begin{convention}\label{conventionnwSszGshmmM} In our figures what follow, every  horizontal  straight edge is $\alpha$-colored, even if  this is not always indicated. The straight edges of slope $-1$, that is the southeast-northwest edges, are $\beta$-colored while the straight edges with slope $1$, that is the southwest-northeast edges, are $\gamma$-colored. Finally, the \emph{solid} curved edges are $\delta$-colored.  (We should disregard the \emph{dashed} curved edges unless otherwise stated.)
Figure~\ref{figconv} helps to keep this convention in mind. 
Note that later in the paper, $\alpha$, \dots, $\delta$ will often be ``decorated'' by superscripts like $\vph$ but these decorations will not appear in our figures. For example,
Figure~\ref{figvrzZ} visualizes $\stzgb$ with $\beta$-colored edges rather than $\stzgb$-colored ones. 
\end{convention}


\begin{figure}[ht]
\centerline
{\includegraphics[scale=0.9]{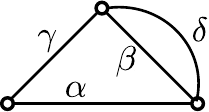}}
\caption{Notation for $\alpha$, $\beta$, $\gamma$ and $\delta$}
\label{figconv}
\end{figure}

Also, let us agree upon that
\begin{equation}\left.
\begin{aligned}
\sum_{\text{for all meaningful x}}\equ{u_x}{v_x} &\text{ will be denoted by}\cr
&\faequ{u_x}{v_x}\text{ or }\faequ{u_y}{v_y};
\end{aligned}\,\right\}
\end{equation}
that is, each of $x$ and $y$ in subscript or superscript position will mean that a join is formed for all meaningful values of these subscripts or superscripts. If only a part of the meaningful subscripts or superscripts are needed in a join, then the following notational convention will be in effect:
\begin{equation}
\xequ{u^{(i)}}{v^{(i)}}{i\in I}\quad\text{ stands for }\quad
\sum_{i\in I}\equ{u^{(i)}}{v^{(i)}}.
\end{equation} 
For an \emph{odd} positive integer $m$, to be referred to as \emph{Z-length}, define 
\begin{equation}
k=\zwid m:=\frac{m+3}2,\,\,\text{ to be referred to as \emph{Z-width}.}
\label{eqtxtHskzftmmCs}
\end{equation}
Equivalently, we can assume that the Z-width $k\geq 2$ is given; then 
\begin{equation}
m=\zlen k:=2k-3.
\label{eqtxtzhBmhQxlTgtNkf}
\end{equation}
The letter $Z$ in ``Z-length'' and ``Z-width'' has been chosen to remind us that an important particular case of what we are going to define is due to Z\'adori~\cite{zadori}.
Let us emphasize that $m$ (possibly with a subscript or superscript) in this paper is always an \emph{odd} positive integer.
Given $m$ and $k=\zwid m$ as above, a pair $\pair s t$ of nonnegative integers is called a \emph{necktie $($with respect to Z-length $m$ \textup{or} with respect to Z-width $k$)}  
if either $\pair s t=\pair 1 1$, or $0\leq s<t\leq k-1=\zwid m-1$. The pair $\pair 1 1 $ is called the \emph{trivial necktie} while we speak of a \emph{nontrivial necktie} if $s<t$. 
By a \emph{pin vector} (for a configuration to be defined soon) of Z-length $m$ we mean a vector $\vec x=\tuple{x_1,\dots,x_m}$ of $m$ bits.  That is, each $x_i$ is 0 or 1. In Section~\ref{sectdirprod}, it will be made clear why ``pin'' occurs in our terminology. Pin vectors are the same as bit vectors but pin vectors are used for a specific  purpose.  
The number of bits, $m$, is also called the \emph{dimension} of $\vec x$. 
By an \emph{identification quadruple $\vph$} or, in short, an \emph{id-quadruple} we mean a quadruple 
\begin{equation}\left.
\parbox{9.7cm}{ $\vph=\tuple{m,s,t,\vec z}$ 
such that $\vec z$ is an $m$-dimensional bit vector and either $\pair s t=\pair 1 1$, or $s$ and $t$ are integers satisfying  $0\leq s<t\leq \zwid m-1$. We will often denote
$m$, $s$, $t$, and $\vec z$ by $m_\vph$, $s_\vph$, $t_\vph$, and $\vec z_\vph$, respectively. Also, we are going to use the notation   $ k_\vph:=\zwid m$ and 
$n_\vph:=
\begin{cases}
m+4,&\text{if }\pair s t=\pair 1 1;\cr
m+5,&\text{otherwise}.
\end{cases}
$
}\,\,\,\right\}
\label{eqpbxHnstrNKhmrrDm}
\end{equation}

\begin{figure}[ht]
\centerline
{\includegraphics[scale=0.9]{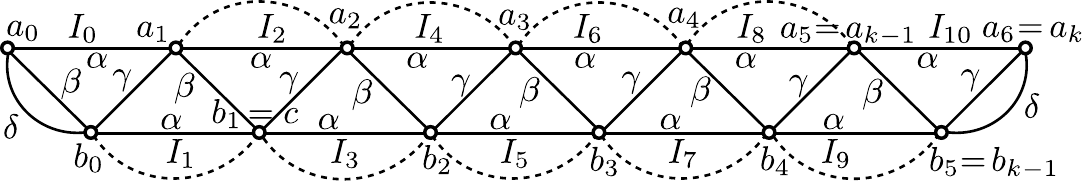}}
\caption{$\Zppfi{\tuple{9, 1,1,\vec 0}}$, that is, Z\'adori's thirteen element configuration (disregard the dashed arcs)}
\label{figzbrdd}
\end{figure}

The pair $\pair s t=\pair{s_\vph}{t_\vph}$ is the necktie of $\vph$. 
Note that $\vph$ is determined by the triple $\tuple{s_\vph,t_\vph,\vec z_\vph}$. However, we use quadruples rather than triples because, say, $\vph:=\tuple{3,1,2,\vec 0}$ is \emph{not} determined by  $\tuple{1,2,\vec 0}$ without specifying the dimension of the zero vector in it. If we need another id-quadruple, we usually denote it by
\begin{equation}\left.
\parbox{8.2cm}{ $\pvph=\tuple{m',s',t',\pvec z}=\tuple{m_\pvph,s_\pvph,t_\pvph,\pvec z}$, and we also use the notation $k'=k_\pvph=\zwid{m'}$ and $n'=n_\pvph$.
}\,\,\,\right\}
\label{eqpbxnhMrtnkVnsRlTn}
\end{equation}
Also, if $s_\vph\neq t_\vph$, then we define the 
\begin{equation}\left.
\parbox{8.9cm}{\emph{necktie-free version} $\mvph:=\tuple{m_\vph,1,1,\vec z_\vph}$ of $\vph$, and we use the notation $\mvph=\tuple{m_\mvph,s_\mvph,t_\mvph,\vec z_\mvph}=\tuple{m_\vph,1,1,\vec z_\vph}$,  $k_\mvph=\zwid{m_\mvph}$, and $n^-=n_\mvph=n_\vph-1$.
}\,\,\,\right\}
\label{eqpbxsznxsgbTrW}
\end{equation}
In other words, we obtain $\mvph$ from $\vph$ by changing the nontrivial necktie of $\vph$ to the trivial one.
Associated with an id-quadruple $\vph$ from \eqref{eqpbxHnstrNKhmrrDm}, we define an $n_\vph$-element system $\Zfi$ as follows; this system will be called a \emph{configuration} or, if $\vph$ needs to be specified, a \emph{$\vph$-configuration}. Letting $k:=k_\vph$, the  underlying set of  $\Zfi$ is  also denoted by $\Zfi$ and it consists of
\begin{equation}\left.
\begin{aligned}
&\kern 1.6cm \stza_0,\,\,\stza_1,\,\dots,\,\stza_k, \stzb_0,\,\stzb_1,\,\dots,
\stzb_{k-1},\,\stzc\cr
&\parbox{9.0cm}{such that the elements $\stza_0,\dots, 
,\stza_k, \stzb_0,\dots,\stzb_{k-1}$ are pairwise distinct,   $\stzc$ is distinct from the previous elements if $s_\vph<t_\vph$, and  $\stzc=\stzb_1$ if $\pair {s_\phi}{t_\phi}=\pair 1 1$; so $|\Zfi|=n_\phi$.}
\end{aligned}\,\,\right\}
\label{eqpbxbnLltkJlJlPl}
\end{equation}
Without their superscripts, these elements are visualized in Figures~\ref{figzbrdd} and \ref{figZgRjmCbsWs} for $\vph=\tuple{9,1,1,\vec 0}$, Figure~\ref{figzbrkkpe} for $\vph=\tuple{3,0,1,\vec 0}$, and Figure~\ref{figvrzZ} for $\vph=\tuple{9,1,4,\vec 0}$; disregard the dashed arcs (and arrows) in Figures~\ref{figzbrdd}, \ref{figzbrkkpe},  \ref{figvrzZ} and the dotted and dashed ovals in Figure~\ref{figZgRjmCbsWs}.
(The dashed arcs will be needed later.) The elements of $\Zfi$ are always placed in the plane in the way shown by the above-mentioned figures; this will be necessary to maintain Convention~\ref{conventionnwSszGshmmM}. 
 Note that regardless the value of the necktie $\pair s t:=\pair{s_\vph}{t_\vph}$,\, $\stzc$ is the intersection point of the line through $\stza_s$ and $\stzb_s$ with the line through $\stza_{t+1}$ and $\stzb_t$.
Let $m:=m_\vph$ and $k:=k_\vph$. The edges $\pair{\stza_0}{\stza_1}$, $\pair{\stzb_0}{\stzb_1}$, $\pair{\stza_1}{\stza_2}$, $\pair{\stzb_1}{\stzb_2}$, $\pair{\stza_2}{\stza_3}$, $\pair{\stzb_2}{\stzb_3}$, \dots , 
  $\pair{\stzb_{k-2}}{\stzb_{k-1}}$,  $\pair{\stza_{k-1}}{\stza_{k}}$, listed in zigzags,  
\begin{equation}
\text{will be denoted by $\stzI_0$, $\stzI_1$, $\stzI_2$, $\stzI_3$, $\stzI_4$, $\stzI_5$, \dots $\stzI_{m}$, $\stzI_{m+1}$,}
\label{eqtxtMhRskSlHgW} 
\end{equation}
respectively; see Figures~\ref{figzbrdd}--\ref{figzbrkkpe}, where the superscripts $\vph$ are omitted. The least equivalence collapsing the endpoints of $I_j$ will be denoted by $\kequ{I_j}$, that is, $\kequ{I_0}=\equ{\stza_0}{\stza_1}$,  $\kequ{I_1}=\equ{\stzb_0}{\stzb_1}$, etc. In order to turn $\Zfi$ into a structure, we define the following equivalences; $z_i$ will denote the $i$-th bit of $\vec z_\vph$:
\allowdisplaybreaks{
\begin{align}
&\begin{aligned}
  \stzga:=\kequ{&\stza_0,\stza_1,\dots \stza_k} +\kequ{\stzb_ 0,\stzb_1,\dots \stzb_{k-1}}\cr
 = \faequ{&\stza_x}{\stza_{x+1}}+\faequ{\stzb_y}{\stzb_{y+1}},
\end{aligned}
\cr
&\begin{aligned}
\stzgb:={}&\equ{\stzb_s}{\stzc} + \faequ{\stza_x}{\stzb_x}\cr
={}&\equ{\stzb_s}{\stzc} + \xequ{\stza_i}{\stzb_i}{0\leq i\leq k-1},
\end{aligned}
\cr
&\begin{aligned}
\stzgc:={}&\equ{\stzb_t}{\stzc} +  \faequ{\stza_{x+1}}{\stzb_x}\cr 
= {}&\equ{\stzb_t}{\stzc} +  \xequ{\stza_{i+1}}{\stzb_i}{0\leq i\leq k-1},
\end{aligned}
\cr
&\stzgd:=\equ{\stza_0}{\stzb_0}+\equ{\stza_k}{\stzb_{k-1}}
+\sum_{ {\scriptstyle i=1}  \atop {\scriptstyle z_i=1} }^{m} \kequ{I_i}
,\cr
&\text{and we let }\Zfi:=\tuple{\Zfi;\stzga,\stzgb,\stzgc,\stzgd}.
\label{eqZgRnGvSwsdr}
\end{align}
}%
So,  the $\vph$-configuration is defined by \eqref{eqZgRnGvSwsdr}. Let us emphasize that even if $I_0$ and $I_{m+1}$ are defined, they do not occur in the definition of $\stzgd$.  
According to Convention~\ref{conventionnwSszGshmmM}, 
the equivalences $\stzga$,  $\stzgb$, and $\stzgc$ are
visualized by Figures~\ref{figzbrdd}, \ref{figZgRjmCbsWs}, \ref{figzbrkkpe}, and \ref{figvrzZ}. For $\vec z=\vec 0$, $\stzgd$ is visualized only if we omit all the dashed arcs. However, if $\vec z\neq\vec 0$, then only some of the dashed arcs should be omitted and the rest should be changed to solid arcs. 
The particular case $\Zppfi{\tuple{m,1,1,\vec 0}}$ is the so-called \emph{Z\'adori Configuration} of \emph{odd size} $n_{\tuple{m,1,1,\vec 0}}=|\Zppfi{\tuple{m,1,1,\vec 0}}|=m+4$; this configuration was introduced (with different notation) in Z\'adori~\cite{zadori}. For this particular case, the following lemma is due to  Z\'adori~\cite{zadori}. For the even case, that is, for the case of nontrivial neckties, our $\Zfi$ is different from Z\'adori's one even when $\vec z_\vph=\vec 0$.

\begin{figure}[ht]
\centerline
{\includegraphics[scale=0.9]{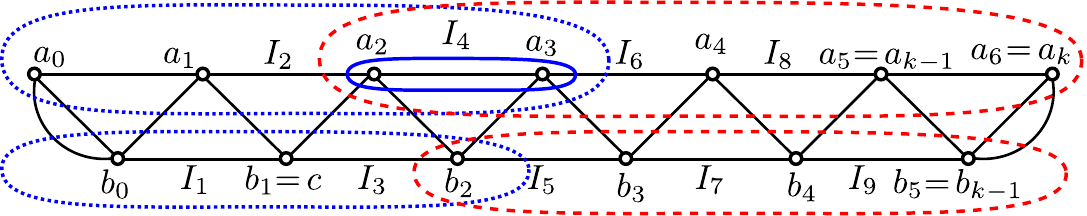}}
\caption{For $\vph=\tuple{9,1,1,\vec 0}$, the lower and upper bounds of $\stzf_4(\stzbmu)$ are illustrated by the solid oval and the dotted oval, respectively, while  $\rftrm 6(\stzbmu)$ is given by the dashed oval}
\label{figZgRjmCbsWs}
\end{figure}

\begin{lemma}\label{lemmaZoddxzG}
For every id-quadruple $\vph$, the equivalence lattice $\Equ{\Zfi}$ is generated by $\set{\stzga,\stzgb,\stzgc,\stzgd}$. Consequently, $\Part n$ is four-generated for all natural numbers $n\geq 5$.
\end{lemma}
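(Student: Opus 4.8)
The plan is to fix an id-quadruple $\vph=\tuple{m,s,t,\vec z}$, write $k:=k_\vph$, and show that every atom $\kequ{p,q}$ of $\Equ{\Zfi}$ lies in the sublattice $S:=\sublat{\stzga,\stzgb,\stzgc,\stzgd}$; since $\Equ{\Zfi}$ is atomistic, this will give $S=\Equ{\Zfi}$. The second sentence of the lemma will then follow: given $n\geq 5$, choose $m$ odd so that $\Zfi$ for $\vph=\tuple{m,1,1,\vec 0}$ has exactly $n$ elements when $n\geq 5$ is odd (namely $m=n-4$), and for even $n\geq 6$ pick a nontrivial necktie so that $n_\vph=m+5=n$ (namely $m=n-5$, any $0\leq s<t\leq k-1$); since $\Equ{\Zfi}\cong\Part{n_\vph}\cong\Part n$, four-generation of $\Part n$ drops out.

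\textbf{Key steps in order.} First I would recover the ``rung'' equivalences $\equ{\stza_i}{\stzb_i}$ and $\equ{\stza_{i+1}}{\stzb_i}$ (the edges of $\stzgb$ and $\stzgc$ other than the two involving $\stzc$). The natural device is the Circle Principle (Lemma~\ref{lemmaHamilt}): the points $\stza_0,\stzb_0,\stza_1,\stzb_1,\dots$ together with $\stzc$ sit on a zigzag/circle, and $\stzga$ already collapses the two ``horizontal rows'' $\stza_0,\dots,\stza_k$ and $\stzb_0,\dots,\stzb_{k-1}$. So meeting $\stzga$ with suitable joins of $\stzgb,\stzgc,\stzgd$ (and previously recovered atoms) should peel off individual rungs one at a time, marching along the configuration; the presence of the vertical edges $\stza_0\stzb_0$ and $\stza_k\stzb_{k-1}$ inside $\stzgd$ gives the needed ``starting rungs''. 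Second, once enough rungs are in $S$, I would recover the zigzag edges $\kequ{I_j}$, i.e.\ $\equ{\stza_i}{\stza_{i+1}}$ and $\equ{\stzb_i}{\stzb_{i+1}}$, again by meeting $\stzga$ with a join of rungs via the Circle Principle applied to a short cycle (two rungs plus one horizontal step plus one diagonal step bound a $4$-cycle, so its ``opposite'' edge—a single zigzag edge—appears as a meet). Here the bit vector $\vec z$ matters: the edges $\kequ{I_i}$ with $z_i=1$ are already handed to us inside $\stzgd$, so only the $z_i=0$ ones need work, but the same $4$-cycle argument covers them uniformly. Third, having all rungs and all zigzag edges (the edges of $\stzga$) in $S$, one more application of the Circle Principle to the full Hamiltonian cycle through all of $\stza_0,\dots,\stza_k,\stzb_{k-1},\dots,\stzb_0$ (and then, in the nontrivial-necktie case, inserting $\stzc$ via the two edges $\equ{\stzb_s}{\stzc}$ and $\equ{\stzb_t}{\stzc}$, which are recoverable from $\stzgb,\stzgc$ and already-obtained rungs) yields every atom $\kequ{p,q}$.

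\textbf{Main obstacle.} The hard part will be the bookkeeping of the induction that recovers the rungs and zigzag edges in the right order: one must choose, at each stage, a join of already-available equivalences whose meet with $\stzga$ is exactly the next desired single edge and nothing more, and this requires knowing precisely which pairs are \emph{not} yet collapsed—hence a careful description of the partition generated at each intermediate step. The special element $\stzc$ and the two neckties ($\pair 11$ versus $s<t$) force a case split: when $\pair{s_\vph}{t_\vph}=\pair 11$ we have $\stzc=\stzb_1$, so $\stzgb$ and $\stzgc$ each pick up an extra coincidence that must be accounted for, whereas when $s<t$ the point $\stzc$ is genuinely new and the edges $\stzb_s\stzc$, $\stzb_t\stzc$ create a ``shortcut'' across the configuration that has to be incorporated into the Circle-Principle cycles. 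For the $\pair 11$ case the argument is exactly Z\'adori's and can be cited; the real work is checking that the same strategy, suitably adapted, survives the insertion of $\stzc$ and of the $\vec z$-prescribed extra edges in $\stzgd$. Everything else—atomisticity of $\Equ{\Zfi}$, the arithmetic matching $n_\vph$ to an arbitrary $n\geq 5$—is routine.
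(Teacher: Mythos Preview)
Your overall architecture---reduce to atoms, apply the Circle Principle on a Hamiltonian cycle, handle the nontrivial necktie by first treating $\Zmfi$ and then adjoining $\stzc$---matches the paper, as does the arithmetic matching $n_\vph$ to a given $n\geq 5$. But your first step has a real gap. You propose to obtain individual rungs $\equ{\stza_i}{\stzb_i}$ and $\equ{\stza_{i+1}}{\stzb_i}$ by ``meeting $\stzga$ with suitable joins of $\stzgb,\stzgc,\stzgd$''. That cannot work: $\stzga$ separates the two rows $\{\stza_0,\dots,\stza_k\}$ and $\{\stzb_0,\dots,\stzb_{k-1}\}$, so anything below $\stzga$ collapses no cross-row pair whatsoever. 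The only rungs extractable at the outset are the two side edges $\equ{\stza_0}{\stzb_0}$ and $\equ{\stza_k}{\stzb_{k-1}}$ (as $\stzogb(\stzbmu)\cdot\stzgd$ and $\stzogc(\stzbmu)\cdot\stzgd$, where $\stzogb(\obmu):=\obe(\oal+\ode)$ strips the necktie edge from $\stzgb$, and similarly $\stzogc$), and there is no evident inductive step from these to further individual rungs before any horizontal edge is available; your $4$-cycle argument in Step~2 presupposes exactly the rungs that Step~1 fails to produce.

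The paper reverses your order: it first isolates every \emph{horizontal} edge $\kequ{\stzI_j}$. This is done with two families of terms, a ``right-going'' sequence $\stzf_0,\stzf_1,\dots$ (start with $\stzf_0(\obmu)=\oal\bigl(\stzogb(\obmu)\cdot\ode+\stzogc(\obmu)\bigr)$; at each step add $\stzogb$ or $\stzogc$ alternately and meet with $\oal$, inserting an extra meetand $\ode$ precisely when the corresponding bit of $\vec z$ is $1$) and a symmetric ``left-going'' sequence $\rftrm 0,\rftrm 1,\dots$ from the other end. One proves $\kequ{\stzI_j}\leq\stzf_j(\stzbmu)\leq\kequ{\stzI_0,\dots,\stzI_j}$ and the mirror statement for $\rftrm$; then the meet $\stzf_j(\stzbmu)\cdot\rftrm_{m+1-j}(\stzbmu)$ equals exactly $\kequ{\stzI_j}$. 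With all horizontal edges and the two side edges in $S$, the cycle $\stza_0,\stza_1,\dots,\stza_k,\stzb_{k-1},\dots,\stzb_0$ has every edge in $S$, and a single application of the Circle Principle yields all remaining atoms on $\Zmfi$, rungs included---so the rungs are never peeled off one by one. The extension to $\stzc$ then proceeds roughly as you sketch, via $\equ{\stza_s}{\stzc}$ and $\equ{\stza_{t+1}}{\stzc}$ and a second Hamiltonian cycle now passing through $\stzc$.
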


\begin{proof}[Proof of Lemma~\ref{lemmaZoddxzG}]
With the quadruple $\obmu:=\tuple{\oal,\obe,\oga,\ode}$ of variables, we define the following quaternary terms by induction; even and odd subscripts will be denoted by  $2\mid i$ and $2\notdiv i$, respectively.
\begin{align}
\stzogb(\obmu)&:=\obe(\oal+\ode),\qquad  \stzogc(\obmu):=\oga(\oal+\ode)
\label{eqmCsltcGrna}\\
\stzf_0(\obmu)&:=\oal(\stzogb(\obmu)\cdot\ode+\stzogc), \quad\text{and for }i\leq m_\vph,
\label{eqZgbVrSdGnNzka}\\
\stzf_{i+1}(\obmu)&:=
 \begin{cases}
  \bigl(\stzf_i(\obmu)+\stzogb(\obmu)\bigr)\oal, &\text{if $2\mid i$, $i<m_\vph$, and $z_{i+1}=0$;}\cr
  \bigl(\stzf_i(\obmu)+\stzogb(\obmu)\bigr)\oal\ode, &\text{if $2\mid i$, $i<m_\vph$, and $z_{i+1}=1$;}\cr  
  \bigl(\stzf_i(\obmu)+\stzogc(\obmu)\bigr)\oal, &\text{if $2\notdiv i$, and $i=m_\vph$ or $z_{i+1}=0$;}\cr
  \bigl(\stzf_i(\obmu)+\stzogc(\obmu)\bigr)\oal\ode, &\text{if $2\notdiv i$, $i<m_\vph$,  and $z_{i+1}=1$.}
 \end{cases}\label{eqZgbVrSdGnNzkb}
\end{align}
Since $m_\vph$ is always odd, the inequality $i<m_\vph$ in the first two lines of \eqref{eqZgbVrSdGnNzkb} is only a redundant information with the purpose of increasing the readability of the proof. 
The condition in third line of  \eqref{eqZgbVrSdGnNzkb} abbreviates that $i$ is odd and, in addition, 
either $i=m_\vph$ (and then $z_{i+1}$ is undefined), or
$i<m_\vph$ and $z_{i+1}=0$.
Note our convention: \emph{overlined} greek letters stand for variables or terms or a tuple of variables; they are meaningful in all lattices not just in equivalence lattices.  
Note also that $\stzogb(\obmu)$ rather than $\obe$  in \eqref{eqZgbVrSdGnNzka} 
is redundant but this is on purpose, which will be clear after that \eqref{eqpbxKjtmsVstpFta}--\eqref{eqpbxKjtmsVstpFtc} are used. Lots of similar redundancy can be detected in the rest of this section. Furthermore, some inequalities and equalities stated  in this section will only be used in the next section; they are all obtained by trivial inductions.

We also define the following terms.
\begin{align}
\rftrm 0 (\obmu)&:= \oal\bigl(\stzogc(\obmu)\ode+  \stzogb(\obmu)\bigr)
\label{eqszKnjbRva}\\
\rftrm {i+1}(\obmu)&:=
\begin{cases}
  \bigl(\rftrm i(\obmu)+\stzogc(\obmu)\bigr)\oal, &\text{if $2\mid i$ ;}\cr
\bigl(\rftrm i(\obmu)+\stzogb(\obmu)\bigr)\oal, &\text{if $2\notdiv i$.}
\end{cases}
\label{eqszKnjbRvb}
\end{align}
Note that while 
\begin{equation}
\text{$\stzf_i(\obmu)$ is defined only for $0\leq i\leq m_\vph+1$,}
\label{eqtxthnGtvGcslfrlTdLmb}
\end{equation}
the terms $\rftrm i(\obmu)$ are defined for all $i\in\Nnul$. 
For convenience, we let
\begin{equation}
\stzbmu:=\tuple{\stzga,\,\,\stzgb,\,\,\stzgc,\,\,\stzgc  };
\label{eqMlKnySrDGttSz}
\end{equation}
see \eqref{eqZgRnGvSwsdr}. The restriction of an equivalence of $\Zfi$ to a subset $B\subseteq \Zfi$ will be denoted by $\restrict\epsilon B$; that is, $\restrict\epsilon B:=\set{\pair u v\in \epsilon: u\in B \text{ and }v\in B}$. After observing that
\begin{align}
\stzogb(\stzbmu)&=\restrict{\stzgb}{\set{\stza_0,\stza_1,\dots, \stza_{k_\vph}, \stzb_0,\stzb_1,\dots, \stzb_{k_\vph-1}}}\quad \text{ and}
\label{eqmgszRtmTBvsGa}\\
\stzogc(\stzbmu)&=\restrict{\stzgc}{\set{\stza_0,\stza_1,\dots, \stza_{k_\vph}, \stzb_0,\stzb_1,\dots, \stzb_{k_\vph-1}}}\,\,,
\label{eqmgszRtmTBvsGb}
\end{align}
a straightforward induction shows that for any   id-quadruples $\vph$ and $\pvph$, see \eqref{eqpbxHnstrNKhmrrDm} and  \eqref{eqpbxnhMrtnkVnsRlTn}, 
\begin{align}
&\stpzf_i(\stzbmu)\leq\kequ{\stzI_j:0\leq j\leq \min(i,m_\vph+1)}\,\, \text{ for }\,\, i\leq m_\pvph+1;
\label{eqBGkJggLjSjgygLsMz}
\\
&\text{in particular, }\stzf_i(\stzbmu)\leq
\kequ{\stzI_j:0\leq j\leq i}\,\, \text{ for }\,\, 0\leq i\leq  m_\vph+1.
\label{eqmCskszvhTScHrS}
\end{align}
Since we work with restrictions according to \eqref{eqBGkJggLjSjgygLsMz} and \eqref{eqmCskszvhTScHrS} and since we do not use any ``outer element'',  let us point out now and let us observe in what follows that 
\begin{equation}\left.
\parbox{9cm}{%
if the $(2k_\vph +1)$-element set
$\{\stza_0$, \dots, $\stza_{k_\vph}$, $\stzb_0$, \dots, $\stzb_{k_\vph-1}\}$ is a subset of a set $A$ and the \emph{restrictions} of $\stzga$, \dots, $\stzgd$ are the equivalences described in \eqref{eqZgRnGvSwsdr}, then
\eqref{eqBGkJggLjSjgygLsMz} and \eqref{eqmCskszvhTScHrS}  are  valid and the forthcoming equalities and inequalities up to   \eqref{eqzsbTdpsVFgDskwm} will be valid even in $\Equ A$.}
\,\,\,\right\}
\label{eqpbxbVmbHlmWzKsT}
\end{equation}
Similarly to \eqref{eqmCskszvhTScHrS}, we obtain by an easy induction that 
\begin{equation}
\kequ{\stzI_i} \leq  \stzf_i(\stzbmu)\,\, \text{ for }\,\, i\in\set{0,1,\dots, m_\vph+1}.
\label{eqmmMcnVgSkrgBrsbkMS}
\end{equation}
For a bit vector $\vec x=\tuple{x_1,x_2,\dots, x_{\dim(\vec x)}}$ and an integer $i\in\set{1,\dots, \dim(\vec x)}$, we define the 
$i$-dimensional \emph{initial segment} of  $\vec x$ as follows: 
\begin{equation}
\inseg{\vec x}i=\tuple{x_1,x_2,\dots, x_i}.
\label{eqZhghnTkzDjk}
\end{equation}
For $m$-dimensional bit vectors $\pvec x$ and $\ppvec x$, the inequality $\pvec x\leq \ppvec x$ is understood as $(\forall i\leq m)(x'_i\leq x''_i)$. For later reference, note  that if $m_\vph=\dim(\vec z_\vph)\leq \dim(\vec z_\pvph)=m_\pvph$, then the same induction as the one used for \eqref{eqmmMcnVgSkrgBrsbkMS} also shows the following.  
\begin{equation}\left.
\parbox{8.2cm}{Assume that $i\leq  m_\vph+1$ and  $i\leq  m_\pvph+1$. Assume also  that if $i=m_\vph+1$, then either $i=m_\pvph+1$, or   $i<m_\pvph+1$ and the $i$-th bit of $\vec z_\pvph$ is 0. Then 
$\inseg{\vec z_\pvph}{m_\vph}\leq \vec z_\vph$ implies that $
\kequ{\stzI_{i}} \leq  \stpzf_i(\stzbmu)$.}
\,\,\,\right\}
\label{eqpbxZhgGtrcshwnhSVkpL}
\end{equation}
For convenience at later references, we combine \eqref{eqmCskszvhTScHrS} and \eqref{eqmmMcnVgSkrgBrsbkMS} into
\begin{equation}
\kequ{\stzI_i} \leq \stzf_i(\stzbmu) \leq
\kequ{\stzI_j:0\leq j\leq i}\,\, \text{ for }\,\, i\in\set{0,1,\dots, m_\vph+1}.
\label{eqZrrtgrGth}
\end{equation}
An induction similar to what was needed for \eqref{eqmCskszvhTScHrS}  yields that 
\begin{equation}
\rftrm i(\stzbmu)=\kequ{\stzI_j: m_\vph+1-i\leq j\leq  m_\vph+1}\, \text{ for }\, i\in\set{0,1,\dots, m_\vph+1}.
\label{eqinTrwkhFt} 
\end{equation}
According to \eqref{eqZrrtgrGth} and \eqref{eqinTrwkhFt}, we think of the terms $\stzf_i$ and the terms $\rftrm i$ as the \emph{terms going to the right} and the \emph{terms going to the left}, respectively. This visual idea, which goes back to Z\'adori~\cite{zadori}, is important both in this section and in the next one. Equations \eqref{eqZrrtgrGth} and \eqref{eqinTrwkhFt} are illustrated in Figure~\ref{figZgRjmCbsWs}. 
Motivated by this figure, we define the following terms.
\begin{equation}
\stzg_{j}(\obmu):= \stzf_j(\obmu)\cdot {}\rftrm{ m_\vph+1-j}(\obmu),\quad\text{ for }\,\, j\in\set{0,1,\dots,m_\vph+1}.
\label{eqzhGhzkRptrtjRk}
\end{equation}
Combining \eqref{eqBGkJggLjSjgygLsMz} and \eqref{eqinTrwkhFt}, it follows that for any id-quadruples $\vph$ and $\pvph$ such that $m_\pvph=m_\vph$,
\begin{equation}
\stpzg_j(\stzbmu)\leq \kequ{\stzI_j}\quad\text{ for }\,\, j\in\set{0,1,\dots, m_\vph+1=m_\pvph+1}.
\label{eqskzTskzblGvfrPhkgKzWs}
\end{equation}
We can state even more for $\pvph=\vph$ since then  \eqref{eqmmMcnVgSkrgBrsbkMS} is also at our disposal to obtain the converse inequality. That is, 
it is clear by \eqref{eqZrrtgrGth} and \eqref{eqinTrwkhFt}, or by  Figure~\ref{figZgRjmCbsWs}, that
\begin{equation}
\stzg_j(\stzbmu)=\kequ{\stzI_j}\quad\text{ for }\,\, j\in\set{0,1,\dots, m_\vph+1}.
\label{eqdmRtnNdgBlW}
\end{equation}

Next, we define two additional quaternary terms, the \emph{side terms}
\begin{equation}
\sltrm{a_0,b_0}(\obmu):= \stzogb(\obmu) \cdot\ode\quad\text{and}\quad 
\srtrm{a_k,b_{k-1}}(\obmu):= \stzogc(\obmu) \cdot\ode.
\label{eqsDtrmSdf}
\end{equation}
Trivially,
\begin{equation}
\sltrm{\stza_0,\stzb_0}(\stzbmu):=\equ{\stza_0}{\stzb_0}\quad\text{and}\quad 
\srtrm{\stza_k,\stzb_{k-1}}(\stzbmu):=\equ{\stza_k}{\stzb_{k-1}}. 
\label{eqsDtrmSfwwjkQk}
\end{equation}
Now, depending on the necktie, the proof splits into two cases.

First, we assume that the necktie of $\Zfi$ is trivial, that is, $\pair{s_\vph}{t_\vph}=\pair 11$. With $k:=k_\vph$, let
\begin{equation}
\tuple{d_0,d_1,\dots,d_{t-1}}:=\tuple{\stza_0,\stza_1,\dots, \stza_{k}, \stzb_{k-1}, \stzb_{k-2},\dots, \stzb_0}.
\label{eqsdScprTrPnlKlKpcCtrB}
\end{equation}
Based on \eqref{eqdmRtnNdgBlW}, \eqref{eqsDtrmSfwwjkQk}, and \eqref{eqsdScprTrPnlKlKpcCtrB},  equality  \eqref{eqGbVrTslcdNssm} together with \eqref{eqtxthszKthTdtrDpcRvszQ} define a quaternary term $\keterm uv(\obmu)$ for any two distinct $u,v\in \Zfi$. This allows us to define a term $\stzeuv u v (\obmu)$ for any $u,v\in\Zfi$ as follows.
\begin{equation}
\stzeuv u v (\obmu):=
\begin{cases}
\text{the above-mentioned }\keterm u v(\obmu),&\text{if }u\neq v;\cr
\oal\cdot  \stzogb(\obmu)  \cdot  \stzogc(\obmu) \cdot\ode,&\text{if }u=v.
\end{cases}
\label{eqtxtszmflHzGnmHs}
\end{equation}
It follows from  \eqref{eqdmRtnNdgBlW}, \eqref{eqsDtrmSfwwjkQk}, and Lemma~\ref{lemmaHamilt} that
\begin{equation}
\stzeuv u v(\stzbmu)=\equ u v\quad\text{ for any }u,v\in\Zfi
\text{, provided }\vph\text{ is necktie-free}.
\label{eqzGrhksVgDbgrGlDm}
\end{equation}

Now let $\vph$ and $\pvph$ be arbitrary id-quadruples such that $m_\vph=m_\pvph$.  (In this paragraph, we do not have to assume that their neckties are trivial.)
Using the terms \eqref{eqsDtrmSdf}, which do not depend on $\vph$, and the terms occurring in \eqref{eqskzTskzblGvfrPhkgKzWs} rather than in \eqref{eqdmRtnNdgBlW}, and still based on the circle \eqref{eqsdScprTrPnlKlKpcCtrB} so that its superscripts are changed to $\pvph$, \eqref{eqtxtszmflHzGnmHs} defines a quaternary term $\stpzeuv u v(\obmu)$. Similarly to the previous paragraph, we obtain from   \eqref{eqskzTskzblGvfrPhkgKzWs}, \eqref{eqsDtrmSfwwjkQk}, and Lemma~\ref{lemmaHamilt} that
\begin{equation}\left.
\parbox{6.8cm}{
if $m_\vph=m_\pvph$, then 
$\,\stpzeuv u v(\stzbmu)\leq\equ u v\,$  for any $u,v$ in 
$\set{\stza_{0},\dots,\stza_{k_\vph},\stzb_{0},\dots,\stzb_{k_\vph-1} }$.}\,\,\,\right\}
\label{eqzsbTdpsVFgDskwm}
\end{equation}

 \begin{figure}[ht]
\centerline
{\includegraphics[scale=0.9]{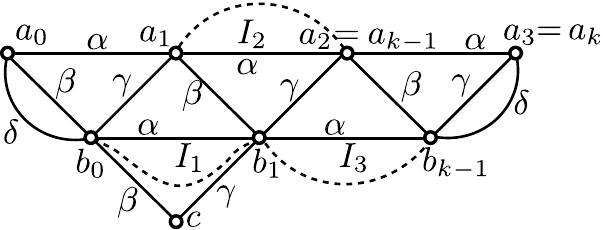}}
\caption{$\Zppfi{\tuple{3,0,1,\vec 0}}$}
\label{figzbrkkpe}
\end{figure}

Second, assume that the necktie $\pair s t:=\pair{s_\vph}{t_\vph}$ is nontrivial. Comment \eqref{eqpbxbVmbHlmWzKsT} allows us to use the terms associated with the necktie-free version $\mvph$. Let $k:=k_\vph=k_\mvph$. We can identify $\stmza_0$, \dots, $\stmza_k$, $\stmzb_0$, \dots, $\stmzb_{k-1}$ with 
$\stza_0$, \dots, $\stza_k$, $\stzb_0$, \dots, $\stzb_{k-1}$, respectively. That is, we consider $\Zmfi=\set{\stza_0, \dots, \stza_k, \stzb_0, \dots, \stzb_{k-1}}$ a subset of $\Zfi$. 
This allows us to let
\begin{equation}
\stzeuv {u}{v}(\obmu):= \stmzeuv u v (\obmu), \text{ for } u,v\in\Zmfi;\text{ see \eqref{eqtxtszmflHzGnmHs}.} 
\label{eqhmsHnZtzSpLkRznmTa}
\end{equation}
It follows form  \eqref{eqpbxbVmbHlmWzKsT} and \eqref{eqzGrhksVgDbgrGlDm} that 
\begin{equation}
 \stzeuv {u}{v}(\stzbmu)=\equ u v, \text{ for } u,v\in\Zmfi.
\label{eqhmsHnZtzSpLkRznmTb}
\end{equation}
Define
\begin{equation}\left. 
\begin{aligned}
\stzeuv {a_s}{c}(\obmu)&:= \obe\cdot \bigl( \oga+ \stzeuv  {a_s}{a_{t+1}}(\obmu) \bigr)=:\stzeuv c{a_s}(\obmu)\,\,\text{ and }\cr
\stzeuv {a_{t+1}}{c}(\obmu)&:= \oga\cdot \bigl( \obe+ \stzeuv {a_s}{a_{t+1}}(\obmu) \bigr) =:\stzeuv c{a_{t+1}}(\obmu).
\end{aligned}\,\,\right\}
\label{eqzghbjRwJF}
\end{equation}
It follows easily from \eqref{eqhmsHnZtzSpLkRznmTb} and the construction of $\Zfi$ that 
\begin{equation} 
\begin{aligned}
\stzeuv {a_s}{c}(\stzbmu)&= \stzeuv c{a_s}(\stzbmu)= \equ{\stza_s}{\stzc}\,\,\text{ and }\cr
\stzeuv {a_{t+1}}{c}(\obmu))&= \stzeuv c{a_{t+1}}(\obmu))= \equ{\stza_{t+1}}{\stzc}.
\label{eqmRknFltshJkD}
\end{aligned}
\end{equation}
Next, we define  a ``circle'' $\tuple{d_0,d_1,\dots,d_{t-1}}$ of \emph{all elements} of $\Zfi$ as follows, see the dashed arrows in Figure~\ref{figvrzZ}:
\begin{equation}\left.
\begin{aligned}
\tuple{d_0,&\dots,d_{t-1}}:=
\langle \stza_0,\stza_1,\dots, \stza_s, \stzc, \stza_{t+1}, \stza_{t+2},\dots,\cr
&\stza_k, \stzb_{k-1}, \stzb_{k-2},\dots,  
 \stzb_{t},\stza_t,\stzb_{t-1},\stza_{t-1},\stzb_{t-2},\cr
&\stza_{t-2},\dots,\stza_{s+1},\stzb_s,\stzb_{s-1},\dots, \stzb_0\rangle.
\end{aligned}\,\,\,\right\}
\label{eqsdzRpsndbblFnhThmThPdB}
\end{equation}
As \eqref{eqhmsHnZtzSpLkRznmTb} and \eqref{eqmRknFltshJkD} show, every two neighbouring elements of this circle have already been ``taken care of''. Hence, based on \eqref{eqhmsHnZtzSpLkRznmTb} and \eqref{eqmRknFltshJkD}, 
 equality \eqref{eqGbVrTslcdNssm},
 \eqref{eqtxthszKthTdtrDpcRvszQ}, and \eqref{eqsdzRpsndbblFnhThmThPdB} define a quaternary term 
\begin{equation}\keterm u v(\obmu)
\label{eqshzKnsvSkpJlRn}
\end{equation}  
for any two distinct $u$ and $v$ such that $\stzc\in\set{u,v}\subseteq \Zfi$. After letting $s=s_\vph$ and  $t:=t_\vph$, we let, for any $u,v\in\Zfi$,
\begin{equation}
\stzeuv u v (\obmu):= 
\begin{cases}
\stmzeuv u v (\obmu),&\text{if }u,v\in\Zmfi\text{ as in \eqref{eqhmsHnZtzSpLkRznmTa}};
\cr
\stzeuv u v (\obmu)\text{ from \eqref{eqzghbjRwJF}},&\text{if }\set{u,v}\in\set{\set{\stza_s,\stzc},\set{\stza_{t+1},\stzc}};\cr
\oal\cdot  \stzogb(\obmu)  \cdot  \stzogc(\obmu) \cdot\ode,&\text{if }u=v=c;
\cr
\keterm u v(\obmu)\text{ from \eqref{eqshzKnsvSkpJlRn}},&\text{otherwise.}
\end{cases}
\label{eqMssnzLlBkbsZwhgmLrRF}
\end{equation}
Note for later reference  that, due to the first line of \eqref{eqMssnzLlBkbsZwhgmLrRF},
\begin{equation}
\parbox{8.6cm}{for any $u,v\in\Zmfi$, the term $\stzeuv u v (\obmu)$ defined in \eqref{eqhmsHnZtzSpLkRznmTa}  and $\stzeuv u v (\obmu)$ defined in \eqref{eqMssnzLlBkbsZwhgmLrRF} are the same terms.}
\label{eqtxtsTsznflhLcSkh}
\end{equation}
We conclude from 
\eqref{eqhmsHnZtzSpLkRznmTb} and \eqref{eqtxtsTsznflhLcSkh}
(for $u,v\in\Zmfi$),  \eqref{eqmRknFltshJkD}, and Lemma~\ref{lemmaHamilt} that
\begin{equation}
\stzeuv u v(\stzbmu)={}\equ u v\quad \text{ for any }u,v\in\Zfi.
\label{eqzkfgHmrGlDmr}
\end{equation}
Due to \eqref{eqpbxbVmbHlmWzKsT} and  \eqref{eqtxtsTsznflhLcSkh}, \eqref{eqzGrhksVgDbgrGlDm} becomes a particular case of  \eqref{eqzkfgHmrGlDmr}. Therefore, 
\begin{equation}
\stzeuv u v(\stzbmu)={}\equ u v\quad \text{ for any id-quadruple $\vph$ and any }u,v\in\Zfi.
\label{eqzkfnbbBNhJdMr}
\end{equation}

\begin{figure}[ht]
\centerline
{\includegraphics[scale=0.9]{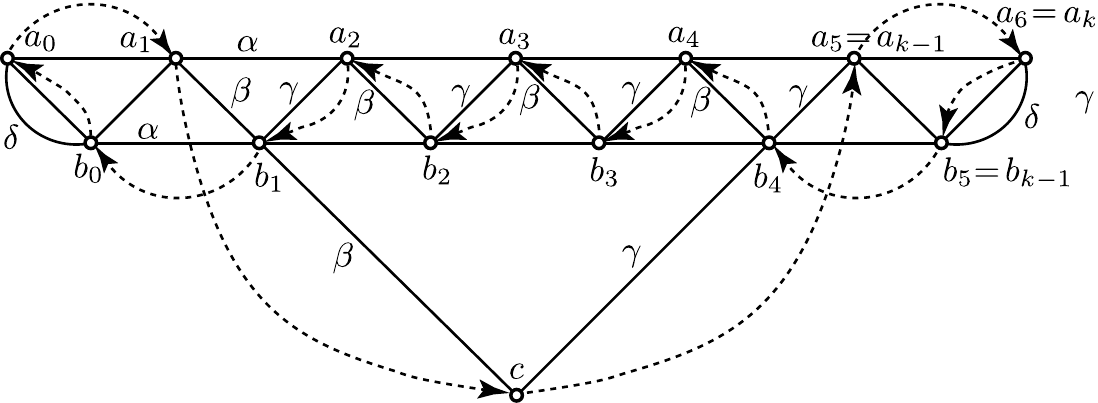}}
\caption{$\Zppfi{\tuple{9,1,4,\vec 0}}$; the dashed arrows indicate the ``circle'' \eqref{eqsdzRpsndbblFnhThmThPdB}}
\label{figvrzZ}
\end{figure}

Next, we interrupt the argument for the sake of some comments, which will be important only after the proof. 
First, observe that the terms $\stzeuv u v (\obmu)$ defined in \eqref{eqMssnzLlBkbsZwhgmLrRF}, see also \eqref{eqhmsHnZtzSpLkRznmTa} and  \eqref{eqtxtsTsznflhLcSkh}, depend on those defined in \eqref{eqtxtszmflHzGnmHs}. If we replace the terms in  \eqref{eqtxtszmflHzGnmHs} by those occurring in \eqref{eqzsbTdpsVFgDskwm}, which are terms yielding smaller values by the comparison of \eqref{eqzGrhksVgDbgrGlDm} and \eqref{eqzsbTdpsVFgDskwm}, then we clearly obtain terms  $\stpzeuv u v (\obmu)$ with smaller values. 
This proves the following counterpart of \eqref{eqzkfnbbBNhJdMr}:
\begin{equation}\left.
\parbox{9.5cm}{Let $\vph$ and $\pvph$ be id-quadruples such that $m_\vph=m_\pvph$, and let us identify $\stza_0,\dots, \stza_{k_\vph},\stzb_0,\dots,\stzb_{k_\vph-1}$ with 
 $\stpza_0,\dots, \stpza_{k_\pvph},\stpzb_0,\dots,\stpzb_{k_\pvph-1}$, respectively. We identify $\stzc$ and $\stpzc$ if and only if $\pair{s_\vph}{t_\vph}=\pair{s_\pvph}{t_\pvph}$
Then $\stpzeuv u v(\stzbmu)\leq\equ u v$ holds in $\Equ\Zfi$  for any $u,v\in\Zfi\cap\Zmfi$.
}\,\,\,\right\}
\label{eqszGnslmsLmjTzZbh}
\end{equation} 
Note that  $\set{\stza_0,\dots, \stza_{k_\vph},\stzb_0,\dots,\stzb_{k_\vph-1}}\subseteq \Zfi\cap\Zpfi$ holds above,  due to the identification of elements; of course, $k_\vph=k_\pvph$ also holds.

In order to define \emph{covering terms}, assume  till the end of the present proof that
no parentheses are omitted from  lattice terms just because the binary lattice operations are associative. That is, we cannot omit the parentheses from, say $(x_1x_2)x_3$. Let $\tau_1$ and $\tau_2$ be subterms of a lattice term $\tau$.  If  $\tau_1$ is a proper subterm of  $\tau_2$ but $\tau_1$ is not a proper subterm of any proper subterm of $\tau_2$, then we say that $\tau_2$ is the {covering term} of $\tau_1$ in $\tau$. Any proper subterm of $\tau$ has a unique covering term (in $\tau$). The term $\tau$ itself has no covering term but this case will not occur below. 
For later reference, let us observe that
the terms occurring in this proof can be parenthesized
in a straightforward way so that in these terms,
\begin{align}&
{\left.\parbox{7cm}{the only covering terms of  $\ode$ are the following four: $\oal+\ode$, $\oal\ode$, 
$\stzogb(\obmu)\ode$,
and  $\stzogc(\obmu)\ode$;}\,\,\,\right\}}
\label{eqpbxKjtmsVstpFta}
\\
&
{\left.\parbox{7cm}{apart from the trivial terms $\keterm u u$ and $\stzeuv u u$,   $\stzogb(\obmu) \ode$ is a subterm only in \eqref{eqZgbVrSdGnNzka} and \eqref{eqsDtrmSdf} while $\stzogc(\obmu) \ode$  only in \eqref{eqszKnjbRva} and \eqref{eqsDtrmSdf}; and}\,\,\,\right\}}
\label{eqpbxKjtmsVstpFtb}
\\
&\text{if $\vec z_\vph=\vec 0$, then $\oal\ode$ does not occur as a subterm.}
\label{eqpbxKjtmsVstpFtc}
\end{align}
For example, we parenthesize the term occurring in the second line of \eqref{eqZgbVrSdGnNzkb} as
$\bigl(\stzf_i(\obmu)+\stzogb(\obmu)\bigr)\cdot(\oal\ode)$.


Finally, resuming the proof,    \eqref{eqzkfnbbBNhJdMr} 
 implies that $\sublat{\stzga,\stzgb,\stzgc,\stzgd}$ contains all atoms of $\Equ\Zfi$. Thus, we conclude  the statement of Lemma~\ref{lemmaZoddxzG}.
\end{proof}

For $n\in\NN$, the number of partitions of the set $\set{1,2,\dots,n}$, that is $|\Equ n|$, is the so-called $n$-th \emph{Bell number}; see, for example,  Rennie and Dobson~\cite{renniedobson}. The $n$-th Bell number will be denoted by $\Bell n$.
The \emph{upper} and \emph{lower integer parts} of a real number will denoted by $\upint x$ and $\loint x$, respectively; for example, $\upint{\sqrt 2\,}=2$ and $\loint{\sqrt 2}=1$.
We define the \emph{width} $\wid n$ and the \emph{length}  $\len n$ of an integer $n\geq 5$ as follows:
\begin{equation}
\wid n:=\loint{(n-1)/2}\qquad \text{ and } \qquad\len n:=\begin{cases}
n-4,&\text{if }2\notdiv n,\cr
n-5,&\text{if }2\mid n;
\end{cases}
\label{eqkrSnmkGtmKlPtrZg}
\end{equation}
they should not be confused with the Z-width and Z-length defined in \eqref{eqtxtHskzftmmCs} and  \eqref{eqtxtzhBmhQxlTgtNkf}. Note that $\len n= \zlen{\wid n}$ and $\wid n= \zwid{\len n}$.

\begin{proposition}\label{proplwStmtsH}
For $n\geq 7$, $\Part n$ has at least
\begin{equation}
\frac 12\cdot n!\cdot 
{{\wid n} \choose 2}^{n-4-\len n}  \cdot \Bell {\wid n-1}  \cdot \Bell {\wid n}
\label{eqTrzRsslbRsnkbnTh}
\end{equation}
four-element generating sets; see  \eqref{eqkrSnmkGtmKlPtrZg} for the notation. 
\end{proposition}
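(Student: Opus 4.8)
The plan is to manufacture many four‑element generating sets of $\Part n$ from Z\'adori‑type configurations $\Zfi$ with $|\Zfi|=n$, exploiting the fact that the fourth generator of $\Equ\Zfi$ may be chosen from a much larger supply than the bit‑vector $\stzgd$ of Lemma~\ref{lemmaZoddxzG}. For the bookkeeping I would set $k:=\wid n$ and $m:=\len n$, so that $m=\zlen k=2k-3$, $k=\zwid m$, and $k\geq 3$ since $n\geq 7$. If $n$ is odd I take id‑quadruples $\vph$ with the trivial necktie, so $n_\vph=m+4=n$ and the necktie is unique; if $n$ is even I take id‑quadruples with a nontrivial necktie $\pair s t$, $0\leq s<t\leq k-1$, so $n_\vph=m+5=n$ and there are exactly $\binom{\wid n}{2}$ such neckties. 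As $n-4-\len n$ equals $0$ for odd $n$ and $1$ for even $n$, the necktie choice contributes the factor $\binom{\wid n}{2}^{\,n-4-\len n}$ to \eqref{eqTrzRsslbRsnkbnTh}.

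Next I would enlarge the fourth generator. Fix such a $\vph$, label the elements of $\Zfi$ as in \eqref{eqpbxbnLltkJlJlPl}, and for an arbitrary equivalence $\pi_a$ on $\set{\stza_1,\dots,\stza_{k-1}}$ and an arbitrary equivalence $\pi_b$ on $\set{\stzb_0,\dots,\stzb_{k-1}}$ put
\begin{equation*}
\delta:=\equ{\stza_0}{\stzb_0}+\equ{\stza_k}{\stzb_{k-1}}+\pi_a+\pi_b .
\end{equation*}
When $\pi_a,\pi_b$ are the interval partitions coded by a bit vector, $\delta$ is precisely the $\stzgd$ of \eqref{eqZgRnGvSwsdr}; in general there are $\Bell{k-1}$ choices of $\pi_a$ and $\Bell k$ of $\pi_b$. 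I claim $\set{\stzga,\stzgb,\stzgc,\delta}$ still generates $\Equ\Zfi$, and here I would simply re‑run the proof of Lemma~\ref{lemmaZoddxzG} with $\delta$ in place of $\stzgd$ and with each bit $z_{i+1}$ in \eqref{eqZgbVrSdGnNzkb} replaced by the truth value of $\kequ{\stzI_{i+1}}\leq\delta$. The only properties of the fourth generator used there are $(\mathrm i)$ $\delta\geq\equ{\stza_0}{\stzb_0}+\equ{\stza_k}{\stzb_{k-1}}$ together with the fact that no block of $\pi_a+\pi_b$ meets $\stza_0$ or $\stza_k$, which still yields $\sltrm{\stza_0,\stzb_0}(\stzbmu)=\equ{\stza_0}{\stzb_0}$ and $\srtrm{\stza_k,\stzb_{k-1}}(\stzbmu)=\equ{\stza_k}{\stzb_{k-1}}$ as in \eqref{eqsDtrmSfwwjkQk}; and $(\mathrm{ii})$ the recursion multiplies by $\ode$ only at steps where $\kequ{\stzI_{i+1}}\leq\delta$, so that \eqref{eqmCskszvhTScHrS}, \eqref{eqmmMcnVgSkrgBrsbkMS}, hence \eqref{eqZrrtgrGth} and \eqref{eqinTrwkhFt}, and therefore \eqref{eqdmRtnNdgBlW}, survive. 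Consequently $\stzg_j(\stzbmu)=\kequ{\stzI_j}$ for all $j$, the side terms recover the remaining two edges of the circle \eqref{eqsdScprTrPnlKlKpcCtrB} (odd case) or \eqref{eqsdzRpsndbblFnhThmThPdB} (even case, through the necktie‑free version exactly as before), and Lemma~\ref{lemmaHamilt} puts every atom of $\Equ\Zfi$ into $\sublat{\stzga,\stzgb,\stzgc,\delta}$; atomisticity finishes the claim. Since $k\geq 3$, the equivalences $\stzga,\stzgb,\stzgc,\delta$ are pairwise distinct (compare block structures: $\stzga$ has exactly two blocks, of sizes $k+1$ and $k$, plus the singleton $\set{\stzc}$ in the even case, while $\stzgb,\stzgc$ are near‑perfect matchings between the two ``rails'' and $\delta$ has a block inside a rail or too few cross‑edges), so we genuinely obtain four‑element generating sets.

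Then I would count. Every bijection $\iota\colon\Zfi\to\set{1,\dots,n}$ induces $\Equ\Zfi\cong\Part n$ and hence turns $\set{\stzga,\stzgb,\stzgc,\delta}$ into a four‑element generating set of $\Part n$; there are $n!$ bijections, so the construction produces $n!\cdot\binom{\wid n}{2}^{\,n-4-\len n}\cdot\Bell{\wid n-1}\cdot\Bell{\wid n}$ triples $\tuple{\vph,\delta,\iota}$. It then remains to show that each four‑element generating set of $\Part n$ arises from \emph{at most two} of these triples, which yields the factor $\tfrac12$. Given an unordered set $\set{\kappa_1,\kappa_2,\kappa_3,\kappa_4}$ coming from our construction, I would first use the block structures above to single out which $\kappa_i$ is $\iota(\stzga)$ and which is $\iota(\delta)$; the union of the two remaining $\kappa_i$'s, viewed as a graph on $\set{1,\dots,n}$, is the $\iota$‑image of $\stzgb\cup\stzgc$, which is a Hamiltonian path in the odd case and a cycle through $\iota(\stzc)$ (the unique singleton block of $\iota(\stzga)$) with two pendant paths in the even case. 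Together with the known $\iota(\stzga)$ and $\iota(\delta)$ this reconstructs the linear arrangement $\stza_0,\stzb_0,\stza_1,\dots,\stzb_{k-1},\stza_k$ of the configuration — hence the entire triple $\tuple{\vph,\delta,\iota}$ — up to the reversal $\stza_i\mapsto\stza_{k-i}$, $\stzb_i\mapsto\stzb_{k-1-i}$. Reversal is an involution of the whole family of triples (it interchanges $\stzgb$ and $\stzgc$, sends $\pair s t$ to $\pair{k-1-t}{k-1-s}$, reverses $\pi_a$ and $\pi_b$, and leaves the resulting generating set unchanged), so indeed at most two triples give the same set.

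I expect the conceptual core — that the enlarged fourth generator still generates — to be a fairly mechanical adaptation of the proof of Lemma~\ref{lemmaZoddxzG}, because $\delta$ enters that proof only through the two side terms and the harmless ``$\cdot\,\ode$'' cuts. The genuinely delicate step will be the de‑duplication: one has to verify that the unordered generating set determines the configuration data up to exactly the single reversal symmetry and nothing coarser — in particular that $\iota(\stzga)$ and $\iota(\delta)$ are always recognizable among the four generators and distinguishable from $\iota(\stzgb),\iota(\stzgc)$, and (in the even case) that the necktie is recoverable — and it is precisely this rigidity that needs $k\geq 3$, i.e.\ $n\geq 7$.
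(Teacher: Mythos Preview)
Your proposal is correct and follows essentially the same route as the paper: enlarge the fourth generator by arbitrary equivalences $\pi_a,\pi_b$ on the two rails, check that generation survives, identify $\alpha$ and $\delta$ inside the unordered four-element set by their block structure, and show the remaining data are recoverable up to the single reflection, giving the factor $\tfrac12$. The paper differs only in packaging --- it keeps the $\vec z=\vec 0$ terms \emph{unchanged} rather than modifying bits (observing via \eqref{eqpbxKjtmsVstpFta}--\eqref{eqpbxKjtmsVstpFtc} that $\ode$ enters only through $\oal+\ode$, $\stzogb(\obmu)\ode$, $\stzogc(\obmu)\ode$, each of which takes the same value at your $\delta$ as at $\stzgd$), it uses the relational products $\chi_j$ instead of the $\stzgb\cup\stzgc$ graph for the rigidity step, and it reduces the even case to the odd one rather than treating both uniformly. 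One small slip in your sketch: in the even case the relation $\stzgb\cup\stzgc$ gives $c$ four neighbours ($a_s,b_s,a_{t+1},b_t$, coming from the two size-three blocks), not two, so the ``cycle with two pendants'' picture is off --- but after deleting $c$ you still get the Hamiltonian path on the remaining vertices and then recover the necktie from the neighbours of $c$, so your conclusion that the configuration is determined up to reversal stands.
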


Note that, for $n\in\set{4,5,6}$, the number $\gnu n$ of four-element generating sets of $\Part n$ will exactly be given in Section~\ref{sectstatcomp}. The case $n\in\set{1,2,3}$ is trivial and will not be considered.
Note also the following.

\begin{remark} In the proof of Proposition~\ref{proplwStmtsH}, we will explicitly construct \eqref{eqTrzRsslbRsnkbnTh} many four-element generating sets. Combining the methods of the present paper with any of the earlier papers dealing with four-element generating sets of $\Equ n$ or, equivalently, $\Part n$, one can construct even more. 
For example, one can ``resect'' finite parts of the infinite constructions used in Cz\'edli~\cite{czedli4gen,czsmall,czedli112gen}, and one can even modify them in various ways. But all these possible improvements in our horizon would need \emph{much} more work than  \eqref{eqTrzRsslbRsnkbnTh} without yielding a ``nice'' explicit lower estimate and without getting close to the truth suspected by Section~\ref{sectstatcomp}. Hence, here we only prove as much as \emph{conveniently} possible with the toolkit developed for the sake of our main target, Theorem~\ref{thmmain}. Proving an estimate better than \eqref{eqTrzRsslbRsnkbnTh} is postponed to a forthcoming paper. 
\end{remark}

\begin{proof}[Proof of Proposition~\ref{proplwStmtsH}] 
Let $k=\wid n$ and $m:=\len n$. Pick an id-quadruple $\vph=\tuple{m,s,t,\vec 0}$ such that  $\Zfi$ from \eqref{eqZgRnGvSwsdr} consists of $n$ elements. Note that if $n$ is odd, then $\pair s t=\pair 1 1$. To ease the notation in this proof, we drop the superscript $\vph$ of the elements of $\Zfi$; for example, $a_1$ and $b_0$ stand for $\stza_1$ and $\stzb_0$. (In this way, Figure~\ref{figzbrdd} is in full harmony with the present proof.) Clearly, 
\begin{equation}\left.
\parbox{9cm}{there are $\Bell {k-1}  \cdot \Bell {k}$ ways to select a pair $\tuple{\mu_1,\mu_2}$ from $\Equ{\set{a_1,a_2,\dots,a_{k-1}}} \times \Equ{\set{b_0,b_1,\dots,b_{k-1}}}$.
}\,\,\right\}
\label{eqtxtZhgShsVNrSczhhs}
\end{equation}
For each pair $\pair{\mu_1}{\mu_2}$ from \eqref{eqtxtZhgShsVNrSczhhs}, let $\kequ{\mu_1}$ denote the equivalence of $\Zfi$ generated by $\mu_1$; this makes sense since $\mu_1$ is a subset of $\Zfi\times \Zfi$. The meaning of  $\kequ{\mu_2}$ is analogous. We let 
\begin{equation}
\delta'= \delta'(\mu_1,\mu_2):=\kequ{\mu_1}+ \equ{a_0}{b_0}+ \kequ{\mu_2}+\equ{a_{k}}{b_{k-1}},
\label{eqhSmkfNftcntNmrsnKspMk}
\end{equation}
and we take $\stzga$,  $\stzgb$, and $\stzgc$ from \eqref{eqZgRnGvSwsdr}.
Observe also that
\eqref{eqZrrtgrGth} for $i=0$ and 
 \eqref{eqsDtrmSfwwjkQk} remain valid if we replace $\stzgd$ in $\stzbmu$ by $\delta'$. Therefore, in virtue  of \eqref{eqpbxKjtmsVstpFta}--\eqref{eqpbxKjtmsVstpFtc}, we conclude that 
\begin{equation}
G(\tuple{\mu_1,\mu_2}:=\set{\stzga,\stzgb,\stzgc,\delta'=\delta'(\mu_1,\mu_2)}
\label{eqZhGrBWgpkwvlrsG}
\end{equation}
is a four-element generating set. 
We claim that 
\begin{equation}
\text{ $G(\tuple{\mu_1,\mu_2})$ determines the pair $\tuple{\mu_1,\mu_2}$}.
\label{eqtxtnbSdjTmbwsT}
\end{equation}
In order to see this, note that $G(\tuple{\mu_1,\mu_2})$ is only a set and it might be unclear at first sight which of its four members is, say, $\stzga$. However, $\stzga$ can be recognized by the property that it is the only member of $G(\tuple{\mu_1,\mu_2})$ that

\begin{equation}\left.
\parbox{8.4cm}{has exactly two nonsingleton blocks and one of these blocks is $(k+1)$-element while the other is  $k$-element; furthermore, it has at most one singleton block}
\,\,\,\right\}
\label{eqtxtHztBmszRGfKr}
\end{equation}
Indeed, property \eqref{eqtxtHztBmszRGfKr} fails for  $\stzgb$ and $\stzgc$ since each of them has a two-element block. (Note that for $2\mid n$, we have just  used the assumption  $n\geq 7$ and so $k>2$.) 
Since at least one of the $\delta'$-blocks is a subset of $\set{a_1,\dots,a_{k-1}}$, 
\eqref{eqtxtHztBmszRGfKr} does not hold for $\delta'$; either because $\delta'$ has $k-1$ singleton blocks, or because $\delta'$ has an at most $(k-1)$-element nonsingleton block.
(We have used  $n\geq 7$ again, which implies that $k-1>1$.)
Thus, $G(\tuple{\mu_1,\mu_2})$ determines $\stzga$. Next, consider the following property of an equivalence $\epsilon\in \Equ\Zfi$:
\begin{equation}\left.
\parbox{6.5cm}{$\epsilon\neq\stzga$ and, in addition, either $\epsilon\stzga\neq \enul$,  or 
$\epsilon\stzga=\enul$ and 
$\epsilon$ has a singleton block.}
\,\,\,\right\}
\label{eqskzhGrnPBstTsln}
\end{equation}
Since only $\delta'$ out of $\set{\stzga,\stzgb,\stzgc,\delta'}$ has this property,  $\delta'$ is also recognized.
We obtain $\mu_2$ as the restriction of $\delta'$ to the smaller nonsingleton $\stzga$-block. Hence, $\mu_2$ is determined by $G(\tuple{\mu_1,\mu_2})$.
From the largest $\stzga$-block, delete 
\begin{equation}\left.
\parbox{9cm}{the unique element $x$ such that there exists an element $y$ in the smaller nonsingleton $\stzga$-block such that $\pair x y\in\stzgb \delta'$;}
\,\,\right\}
\label{eqpbxZghCskjhNQvCR}
\end{equation}
this element is $a_0$. (This identification of $a_0$ will be used later.) 
Replacing $\stzgb$ in \eqref{eqpbxZghCskjhNQvCR} with $\stzgc$, we can describe $a_k$; delete it too. There remains $\set{a_1,\dots, a_{k-1}}$. Although $\stzgb$ and $\stzgc$  are not determined by $G(\tuple{\mu_1,\mu_2})$ separately, the set $\set{\stzgb,\stzgc}$ is determined since so are $\stzga$ and $\delta'$. 
Hence,  $\set{a_1,\dots, a_{k-1}}$ is also determined, and so is $\mu_1$, which is the restriction of $\delta'$ to this set. 
Therefore, \eqref{eqtxtnbSdjTmbwsT} holds.

Next, we assume that $n$ is odd until the opposite is explicitly said. 
We obtain from 
\eqref{eqtxtZhgShsVNrSczhhs} and \eqref{eqtxtnbSdjTmbwsT} that for $\Zfi$, that is, for a given labelling of the elements $\set{1,2,\dots,n}$ by 
$\Zfi$ and defining $\stzga$, $\stzgb$, $\stzgc$, and $\delta'$ accordingly, there are $\Bell k  \cdot \Bell {k-1}$  many generating sets of the form \eqref{eqZhGrBWgpkwvlrsG}. 
The permutations of $\Zfi$ give new generating sets.
Two distinct permutations $\pi_1$ and $\pi_2$ of $\Zfi$ can give the same generating set. But this happens if and only if $\pi_1\pi_2^{-1}$ belongs to the ``stabilizer''
\[\begin{aligned}
\Stb{\Zfi}:= \bigl\{\pi\in \Sym{\Zfi}: \set{\pi(\stzga),\pi(\stzgb),\pi(\stzgc),\pi(\delta')} \cr=  \set{\stzga,\stzgb,\stzgc,\delta'}\bigr\}.
\end{aligned}
\]
Let us fix the following terminology:  for a subset $X$ and a relation $\rho$ of $\Zfi$, a permutation $\pi\in\Sym\Zfi$ is said to \emph{preserve} $X$ and to \emph{preserve} $\rho$ if $\set{\pi(u):u\in X}=X$ and  $\set{\pair{\pi(u)}{\pi(v)}: \pair u v\in\rho } =\rho$, respectively.
Assume that $\pi\in\Stb \Zfi$. Since $\stzga$ is the only member of $\set{\stzga,\stzgb,\stzgc,\delta'}$ having property \eqref{eqtxtHztBmszRGfKr}, $\pi$ preserves $\stzga$. Since the $\stzga$-blocks are of different sizes, each of them is preserved by $\pi$. Using property \eqref{eqskzhGrnPBstTsln}, we obtain that $\pi$ preserves $\delta'$. Hence, $\pi$ also preserves $\set{\stzgb,\stzgc}$.  So
\begin{equation}\left.
\parbox{6.1cm}{$\pi$ preserves each of $\set{a_0, a_1,\dots,a_k}$, $\set{b_0, b_1,\dots,b_{k-1}}$, $\stzga$, $\delta'$, and $\set{\stzgb,\stzgc}$.}\,\,\right\}
\label{eqhzGnRmnRmg}
\end{equation}
Combining \eqref{eqpbxZghCskjhNQvCR}, the property obtained from \eqref{eqpbxZghCskjhNQvCR} by replacing $\stzgb$ by $\stzgc$, and \eqref{eqhzGnRmnRmg}, it follows that $\set{\set{a_0,b_0},\set{a_k,b_{k-1}}}$ is preserved and, additionally,  
\begin{align}
\text{either }&\pi(a_0)=a_k,\,\, \pi(a_k)=a_0,\,\,\pi(b_0)=b_{k-1}, \,\,\pi(b_{k-1})=b_0,
\label{eqbhzTmmRngjsKsLta}
\\
\text{or }&\pi(a_0)=a_0,\,\,  \pi(a_k)=a_k,\,\, \pi(b_0)=b_{0},\,\, \pi(b_{k-1})=b_{k-1}.
\label{eqbhzTmmRngjsKsLtb}
\end{align}
For $j\in\NN$, let 
\[\chi_j:=(\underbrace{\stzgb\circ\stzgc\circ\stzgb\circ\dots}_{j\text{ factors}})\cup (\underbrace{\stzgc\circ\stzgb\circ\stzgc\circ\dots}_{j\text{ factors}}),
\]
where $\circ$ denotes relational product; for example, 
$\stzgb\circ\stzgc=\set{\pair xy: (\exists z)\bigl(\pair xz\in\stzgb\text{ and }\pair zy\in\stzgc \bigr)}$. 
Since $\set{\stzgb,\stzgc}$ is preserved,  $\pi\in\Stb \Zfi$ preserves $\chi_j$ for all $j\in\NN$. 
For $j=1,2,\dots$, the element $a_j$ is characterized by the property that $\pair{a_0}{a_j}$ belongs to $(\stzga\cap \chi_{2j})\setminus\chi_{2j-1}$.  Observe that $\pi$ preserves this property since it preserves $\stzga$,  $\chi_{2j}$ and $\chi_{2j-1}$. In particular, for $j\in\set{2,3,\dots, k-1}$,
\begin{equation}
\pair{\pi(a_0)}{\pi(a_j)}\in (\stzga\cap \chi_{2j})\setminus\chi_{2j-1}.
\label{eqZghsmThClFgrchmmsC}
\end{equation}
Now if \eqref{eqbhzTmmRngjsKsLta}, then
\eqref{eqZghsmThClFgrchmmsC} yields that $\pi(a_j)=a_{k-j}$
for all meaningful $j$. It follows similarly that $\pi(b_j)=b_{k-1-j}$. Hence, if \eqref{eqbhzTmmRngjsKsLta}, then $\pi$ is the reflection across the vertical symmetry axis of $\Zfi$, see Figure~\ref{figzbrdd}. It follows similarly that \eqref{eqbhzTmmRngjsKsLtb} implies that $\pi$ is the identity permutation. We have obtained that at most two permutations belong to $\Stb{\Zfi}$: the  reflection mentioned above and the identity permutation. 
So $|\Stb{\Zfi}|\leq 2$.  Hence, Lagrange's Theorem gives that  $\Stb{\Zfi}$ has at least $\Zfi!/2=n!/2$ cosets. Permutations from different cosets send $\set{\stzga,\stzgb,\stzgc,\delta'}$ to different generating sets; this explains the $n!/2$ in \eqref{eqTrzRsslbRsnkbnTh}. Since $n$ has been assumed to be odd,  $n-4-\len n=0$ by \eqref{eqkrSnmkGtmKlPtrZg} and the  binomial coefficient in \eqref{eqTrzRsslbRsnkbnTh} disappears. Therefore, the argument above proves the statement for $n$ odd.

From now on till the end of the proof, we assume that $n$ is even and we let $n':=n-1$. 
There are $n$ ways to pick an element $c=\stzc$, which will be the unique singleton $\stza$-block.  It is clear by the construction that if $\set{\stzga,\stzgb,\stzgc,\delta'}$ is a generating set
of $\Equ\Zfi$, then the restrictions of these four equivalences generate the lattice of equivalences of $\Zfi\setminus\set{\stzc}$; compare Figures~\ref{figzbrkkpe} and \ref{figvrzZ} with 
Figure~\ref{figzbrdd}. Since the odd case has already been settled, there are  
\begin{equation}
\frac 1 2\cdot n'!\cdot \Bell{\wid {n'}-1}\cdot \Bell{\wid {n'}}
\label{eqdzhBldnnbRnjlkRk}
\end{equation}
ways to pick the four-element set of these restricted equivalences.
To each such set, there are 
\begin{equation}
{\wid{n'}}\choose 2
\label{eqZrcmrKzgnmRZrD}  
\end{equation}
ways to add a necktie. If two four-element sets of the above-mentioned restricted equivalences are distinct ``without neckties", then they will remain distinct after ``putting on'' neckties. 
So the number of four-element generating sets we have shown up is the product of $n$,  \eqref{eqdzhBldnnbRnjlkRk}, and \eqref{eqZrcmrKzgnmRZrD}. 
But $\wid n=\wid{n'}$, $n\cdot n'!= n!$, and $n-4-\len n=1$, so this product is \eqref{eqTrzRsslbRsnkbnTh}. This completes the proof of  Proposition~\ref{proplwStmtsH}.
\end{proof}

For a few values of $n$, the number described by  \eqref{eqTrzRsslbRsnkbnTh}
is given in Table~\ref{tableYSkszjSnhmHL}. For comparison, note that, say, $16!$ is only $2.092\cdot 10^{13}$, up to rounding.
\isitneeded{
\begin{table}
\[
\vbox{\tabskip=0pt\offinterlineskip
\halign{\strut#&\vrule#\tabskip=2pt plus 2pt&
#\hfill& \vrule\vrule\vrule#&
\hfill#&\vrule#&
\hfill#&\vrule#&
\hfill#&\vrule#&
\hfill#&\vrule#&
\hfill#&\vrule\tabskip=0.1pt#&
#\hfill\vrule\vrule\cr
\vonal\vonal\vonal\vonal
&&\hfill$n$&&$7$&&$8$&&$9$&&$10$&&$11$&
\cr\vonal\vonal
&&\hfill\eqref{eqTrzRsslbRsnkbnTh}&&$25\,200$&&$604\,800$&&$13\,608\,000$&&$816\,480\,000$&&$15\,567\,552\,000$&
\cr\vonal
&&\hfill$100p_{\eqref{eqTrzRsslbRsnkbnTh}}$&&$1.03\cdot 10^{-4}$&&$4.95\cdot 10^{-6}$&&$1.63\cdot 10^{-7}$&&$1.08 \cdot 10^{-8}$&&$1.76 \cdot 10^{-10}$&
\cr\vonal
\vonal\vonal\vonal
}} 
\]
\caption{The estimate \eqref{eqTrzRsslbRsnkbnTh} for $n\in\set{7,\dots,10}$}\label{tablekszjSnhmHL}
\end{table}
}

\begin{table}\[
\vbox{\tabskip=0pt\offinterlineskip
\halign{\strut#&\vrule#\tabskip=2pt plus 2pt&
#\hfill& \vrule\vrule\vrule#&
\hfill#&\vrule#&
\hfill#&\vrule#&
\hfill#&\vrule#&
\hfill#&\vrule#&
\hfill#&\vrule\tabskip=0.1pt#&
#\hfill\vrule\vrule\cr
\vonal\vonal\vonal\vonal
&&\hfill$n$&&$7$&&$8$&&$9$&&$10$&&$11$&
\cr\vonal
&&\hfill\eqref{eqTrzRsslbRsnkbnTh}&&$25\,200$&&$604\,800$&&$13\,608\,000$&&$816\,480\,000$&&$15\,567\,552\,000$&
\cr\vonal\vonal\vonal\vonal
  \isitneeded{&&\hfill$100p_{\eqref{eqTrzRsslbRsnkbnTh}}$&&$1.03
  \cdot 10^{-4}$&&$4.95\cdot 10^{-6}$&&$1.63\cdot 
  10^{-7}$&&$1.08 \cdot 10^{-8}$&&$1.76 \cdot 10^{-10}$&
  \cr\vonal}
&&\hfill$n$&&$12$&&$13$&&$14$&&$15$&&$16$&
\cr\vonal
&&\hfill\eqref{eqTrzRsslbRsnkbnTh}&&$1.868\cdot 10^{12}$&&$3.287\cdot 10^{13}$&&$6.902\cdot 10^{15}$&&$1.164\cdot 10^{17}$&&$3.911\cdot 10^{19}$&
\cr\vonal\vonal
\vonal\vonal
}} 
\]
\caption{The estimate \eqref{eqTrzRsslbRsnkbnTh} for $n\in\set{7,8,\dots,16}$}\label{tableYSkszjSnhmHL}
\end{table} 

\isitneeded{
\begin{table}
\[
\vbox{\tabskip=0pt\offinterlineskip
\halign{\strut#&\vrule#\tabskip=2pt plus 2pt&
#\hfill& \vrule\vrule\vrule#&
\hfill#&\vrule#&
\hfill#&\vrule#&
\hfill#&\vrule#&
\hfill#&\vrule#&
\hfill#&\vrule\tabskip=0.1pt#&
#\hfill\vrule\vrule\cr
\vonal\vonal\vonal\vonal
&&\hfill$n$&&$12$&&$13$&&$14$&&$15$&&$16$&
\cr\vonal\vonal
&&\hfill\eqref{eqTrzRsslbRsnkbnTh}&&$1.868\cdot 10^{12}$&&$3.287\cdot 10^{13}$&&$6.902\cdot 10^{15}$&&$1.164\cdot 10^{17}$&&$3.911\cdot 10^{19}$&
\cr\vonal
&&\hfill$100p_{\eqref{eqTrzRsslbRsnkbnTh}}$&&$1.42\cdot 10^{-11}$&&$1.35\cdot 10^{-13}$&&$1.25\cdot 10^{-14}$&&$7.64\cdot 10^{-17}$&&$7.78\cdot 10^{-18}$&
\cr\vonal
\vonal\vonal\vonal
}} 
\]
\caption{The estimate \eqref{eqTrzRsslbRsnkbnTh} for $n\in\set{12,\dots,16}$}\label{tablekjfzHfVL}
\end{table}
}

\isitneeded{
\begin{table}
\[
\vbox{\tabskip=0pt\offinterlineskip
\halign{\strut#&\vrule#\tabskip=2pt plus 2pt&
#\hfill& \vrule\vrule\vrule#&
\hfill#&\vrule#&
\hfill#&\vrule#&
\hfill#&\vrule#&
\hfill#&\vrule\tabskip=0.1pt#&
#\hfill\vrule\vrule\cr
\vonal\vonal\vonal\vonal
&&\hfill$n$&&$98$&&$99$&&$100$&&$1000$&
\cr\vonal\vonal
&&\hfill\eqref{eqTrzRsslbRsnkbnTh}&&$1.252\cdot 10^{245}$&&$3.15\cdot 10^{246}$&&$3.70\cdot 10^{251}$&&$5.288\cdot 10^{4252}$&
\cr\vonal
&&\hfill$100p_{\eqref{eqTrzRsslbRsnkbnTh}}$&&$3.17\cdot 10^{-203}$&&$1.10\cdot 10^{-207}$&&$1.73\cdot 10^{-208}$&&$1.59\cdot 10^{-3454}$&
\cr\vonal
\vonal\vonal\vonal
}} 
\]
\caption{The estimate \eqref{eqTrzRsslbRsnkbnTh} for some large $n$}\label{tablehjfcRszd}
\end{table}
}

\section{Generating direct products}\label{sectdirprod}
The goal of this section is to prove  the only theorem of this paper. Before the reader gets frightened by the technicalities that are needed to formulate its part (B) and the complicated  nature of this part, we formulate two of its corollaries.

\begin{corollary}\label{coroltconsecutive} 
For every integer number $n\geq 9$, 
\begin{equation}
\Part n\times \Part{n+1}\times\dots\times \Part{3n-14}
\label{eqmhTnrSzmdgKszMzHb}
\end{equation}
is a four-generated lattice.
\end{corollary}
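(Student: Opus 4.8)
The plan is to derive Corollary~\ref{coroltconsecutive} as a special case of the technical part~(B) of Theorem~\ref{thmmain}, once that theorem is in place. Since the excerpt stops at the statement of the corollary, I would first isolate what the machinery built in Section~\ref{sectlowbnd} already gives us. The key point is that Lemma~\ref{lemmaZoddxzG} produces, for \emph{every} id-quadruple $\vph$, a fixed quadruple of quaternary lattice terms that reconstructs every atom of $\Equ{\Zfi}$ from $\stzbmu = \tuple{\stzga,\stzgb,\stzgc,\stzgd}$; moreover the ``comparison'' inequalities \eqref{eqszGnslmsLmjTzZbh}, together with the covering-term bookkeeping in \eqref{eqpbxKjtmsVstpFta}--\eqref{eqpbxKjtmsVstpFtc}, are exactly the tool that lets one apply one configuration's terms to another configuration's generators and get an \emph{inequality} rather than an equality. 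This asymmetry (equality on the ``home'' factor, $\leq$ on the others) is the standard device for generating a direct product: if $G=\set{\alpha,\beta,\gamma,\delta}\subseteq L_1\times\dots\times L_r$ is chosen so that on each factor $L_i$ the projection restricts to the generators of a configuration of the same Z-length $m$, then a term that equals an atom $p$ in the $i$-th coordinate will be $\leq$ (the image of) that atom in all other coordinates; meeting finitely many such terms over a suitable family lets one carve out $\tuple{0,\dots,0,p,0,\dots,0}$, and joining these gives everything.

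Concretely, for the corollary I would take $n\geq 9$ and consider the factors $\Part n,\Part{n+1},\dots,\Part{3n-14}$. Each $\Part{n+j}$ is (isomorphic to) $\Equ{\Zppfi{\vph_j}}$ for an id-quadruple $\vph_j$ of Z-length $m=\len n$ or $m=\len n + 1$ — and here is where the arithmetic $3n-14$ comes from: I need a \emph{common} Z-length $m$ across all the listed factors, which forces $\len N \leq m \leq \zlen{k}$-type constraints, i.e. the largest index $3n-14$ must still admit a configuration of the same Z-width as the smallest. The relation $\len n = \zlen{\wid n} = 2\wid n - 3$ and $n_\vph \in \set{m+4, m+5}$ means that a fixed Z-width $k = \wid n$ accommodates the odd size $m+4$ and the even sizes $m+5$ obtained by varying the necktie $\pair s t$ with $0\leq s<t\leq k-1$; counting how far one can push the largest factor while keeping a common $m$ (equivalently, choosing the largest necktie-expanded size) yields an upper index that works out to $3n-14$. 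I would write out this index computation explicitly: it should reduce to checking $3n-14 \leq$ (the largest $n_\vph$ available at Z-width $\wid n$), which is a one-line inequality in $n$.

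The main obstacle — and the part I would spend the most care on — is verifying that the chosen $G$ actually generates the full direct product, not merely each projection. The two ingredients are: (1) generators restrict correctly, which is immediate from Convention~\ref{conventionnwSszGshmmM} and \eqref{eqZgRnGvSwsdr} once the configurations are laid out on disjoint (or appropriately overlapping) vertex sets sharing the ``backbone'' $\set{a_0,\dots,a_k,b_0,\dots,b_{k-1}}$ pattern; and (2) the ``separation'' step: for a distinguished factor $L_i$ and an atom $p$ of $L_i$, one must exhibit a lattice term $\tau$ (built from the $\stzeuv u v$ of \eqref{eqMssnzLlBkbsZwhgmLrRF} and the side terms \eqref{eqsDtrmSdf}) whose value has $i$-th coordinate $p$ and $j$-th coordinate $0$ for all $j\neq i$. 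For the coordinates $j$ that also carry a nontrivial necktie, the subtlety flagged in \eqref{eqpbxKjtmsVstpFta}--\eqref{eqpbxKjtmsVstpFtc} about where $\ode$ and $\stzogb(\obmu)\ode$, $\stzogc(\obmu)\ode$ occur as subterms is what guarantees that applying factor-$i$'s term to factor-$j$'s generators collapses to $0$ (or to something $\leq$ an atom that we then intersect away). Once this separation is established factor-by-factor, atomisticity of each $\Part{n+j}$ — hence of the product — finishes the job exactly as at the end of the proof of Lemma~\ref{lemmaZoddxzG}: the generated sublattice contains $\tuple{0,\dots,p,\dots,0}$ for every atom $p$ in every factor, and joins of these are all of the direct product. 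I would present the corollary's proof as: ``Apply Theorem~\ref{thmmain}(B) with the following choice of parameters \dots; the hypotheses hold because \dots [the index inequality]'', deferring all the real work to the theorem itself.
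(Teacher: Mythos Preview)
Your overall strategy---derive the corollary by plugging a specific parameter choice into Theorem~\ref{thmmain}(B) and then invoking Remark~\ref{remarkcZgBnjT}---is exactly what the paper does. But the parameter choice you sketch rests on a misconception that would make the argument fail.

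You write that you ``need a \emph{common} Z-length $m$ across all the listed factors'' and try to explain the bound $3n-14$ as the largest size still admitting a configuration of the same Z-width as $\Part n$. This cannot work: for a fixed odd Z-length $m$, the configurations $\Zfi$ have size $n_\vph\in\set{m+4,m+5}$ only (see \eqref{eqpbxHnstrNKhmrrDm}), so a single Z-length covers at most two consecutive partition lattices, not the $2n-13$ appearing in \eqref{eqmhTnrSzmdgKszMzHb}. Your check ``$3n-14\leq$ (the largest $n_\vph$ at Z-width $\wid n$)'' would yield an upper bound around $n+1$, far short of $3n-14$. More fundamentally, Theorem~\ref{thmmain}(B) is stated for \emph{strictly increasing} odd numbers $m_1<m_2<\dots<m_{d+2}$; there is no common $m$ in its hypotheses. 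The separation of factors with different Z-lengths is precisely the hard part of the theorem, and it is accomplished not through the $\stzeuv u v$ terms and necktie bookkeeping you describe, but through the ``virtual locksmithing'' mechanism built into the pin vectors $\vec z_\vph$.

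The paper's proof instead takes $d\geq 3$ odd and sets $m_i:=d+2i$ for $i\in\set{1,\dots,d+1}$ (the slot $i=d+2$ is discarded via Remark~\ref{remarkcZgBnjT}). Using \eqref{eqcztsmsTrhltmRb} and \eqref{eqpbxmNkSgtSnkMhG}, one checks that $\pair 1 1$ lies in each of the sets $\tra{\cdot}{\cdot}$ required by \eqref{eqthmmaina}--\eqref{eqthmmaind}, so $p_i=q_i=1$ is admissible for $i\leq d+1$; Remark~\ref{remarkcZgBnjT} also lets one replace every $w_i$ by $1$. Since $n_1'=m_1+4=d+6$ and $n_{d+1}''=m_{d+1}+5=3d+7$, the theorem gives that $\Part{d+6}\times\dots\times\Part{3d+7}$ is four-generated. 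For $n$ odd put $d:=n-6$, so the range is $[n,3n-11]\supseteq[n,3n-14]$; for $n$ even put $d:=n-7$, so the range is exactly $[n,3n-14]$. Thus the bound $3n-14$ comes from the arithmetic $3d+7$ with $d\in\set{n-6,n-7}$, not from any common-width constraint.
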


\begin{corollary}\label{corolmanyfactors} For every positive integer $u$, 
\begin{equation}
\Part{n}^u\times \Part{n+1}^u\times\dots \Part{n+u-1}^u
\label{eqhgsvMtmSkpbNhsrLVt}
\end{equation} 
is a four-generated lattice for all sufficiently large integers $n$.
\end{corollary}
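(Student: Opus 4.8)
The plan is to deduce Corollary~\ref{corolmanyfactors} from Theorem~\ref{thmmain}(B), once the latter has been established. The first step is the harmless remark that, by associativity and commutativity of the direct product,
\[
\Part{n}^u\times \Part{n+1}^u\times\dots\times \Part{n+u-1}^u\;\cong\;\prod_{j=0}^{u-1}\ \prod_{\ell=1}^{u}\Part{n+j}.
\]
Hence it is enough to choose, for each $j\in\set{0,\dots,u-1}$, a list $\vph_{j,1},\dots,\vph_{j,u}$ of id-quadruples with $n_{\vph_{j,\ell}}=n+j$ for all $\ell$, in such a way that the whole collection of these $u^2$ id-quadruples satisfies the hypotheses of Theorem~\ref{thmmain}(B); that theorem then produces a four-element generating set of $\prod_{j,\ell}\Equ{\Zppfi{\vph_{j,\ell}}}$, and $\Equ A\cong\Part A$ finishes the deduction.

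The second step is this selection, which is pure bookkeeping once Theorem~\ref{thmmain}(B) is available. By \eqref{eqpbxHnstrNKhmrrDm} and \eqref{eqkrSnmkGtmKlPtrZg}, an integer $n+j\ge 5$ is the size $n_\vph$ of an id-quadruple $\vph$ precisely when $m_\vph=\len{n+j}$, with the trivial necktie if $n+j$ is odd and with some nontrivial necktie if $n+j$ is even; recall $\len{n+j}=\zlen{\wid{n+j}}$. Inside this pool the remaining freedom is the pin vector $\vec z\in\set{0,1}^{\len{n+j}}$, together with the choice of necktie in the even case. I expect the compatibility condition of Theorem~\ref{thmmain}(B) to amount, for id-quadruples of the same dimension, to an antichain requirement on their pin vectors, and, for id-quadruples of different dimensions, to an incomparability requirement between the shorter pin vector and the initial segments of the longer one --- conditions whose germs already appear in \eqref{eqpbxZhgGtrcshwnhSVkpL} and \eqref{eqszGnslmsLmjTzZbh}. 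Since a maximal antichain in $\set{0,1}^d$ has size roughly $2^d/\sqrt d$ and $\len{n+j}\to\infty$ as $n\to\infty$ for each fixed $j$, and since only the $u$ target sizes $n,\dots,n+u-1$ are involved, there is an $n_0=n_0(u)$ such that for all $n\ge n_0$ the required $u^2$ id-quadruples can be chosen simultaneously, meeting the inter-dimension incomparability constraints as well. Feeding them into Theorem~\ref{thmmain}(B) and undoing the isomorphism of the first step then proves the corollary.

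The main obstacle is not this counting but the prerequisite, Theorem~\ref{thmmain}(B) itself. Its content is that putting the ``merged'' generators $\alpha,\beta,\gamma,\delta$ --- each a single equivalence on the disjoint union of all the configurations, restricting on $\Zppfi{\vph_{j,\ell}}$ to the corresponding equivalence of \eqref{eqZgRnGvSwsdr} --- generates the \emph{full} direct product and not merely a proper subdirect product. By Lemma~\ref{lemmaZoddxzG} every coordinate projection is already onto, so one needs only, for each component $\vph_{j,\ell}$ and each atom $\equ uv$ of $\Equ{\Zppfi{\vph_{j,\ell}}}$, a quaternary term evaluating to $\equ uv$ on that component and to $\enul$ on all the others. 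The terms $\stzeuv uv(\obmu)$ produced in the proof of Lemma~\ref{lemmaZoddxzG} are the natural candidates: by \eqref{eqpbxZhgGtrcshwnhSVkpL}, \eqref{eqszGnslmsLmjTzZbh}, the covering-term discipline \eqref{eqpbxKjtmsVstpFta}--\eqref{eqpbxKjtmsVstpFtc}, and Lemma~\ref{lemmaHamilt}, the term tailored to $\vph_{j,\ell}$ recovers $\equ uv$ in its own component, while the incomparability of the pin vectors --- and of their initial segments across different dimensions --- makes the ``right-going'' terms $\stzf_i$ of the foreign configurations stall, forcing the value down to $\enul$ there. Once all atoms of all factors are seen to lie in the generated sublattice, atomisticity of each factor yields four-generation of the whole product, and the corollary follows.
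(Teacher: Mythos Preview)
Your proposal does not actually apply Theorem~\ref{thmmain}(B); it speculates about what its hypotheses ``amount to'' and then re-sketches the proof of the theorem itself. The corollary is meant to be a short deduction \emph{from} the theorem as stated, so the third paragraph is beside the point, and the second paragraph never makes contact with the concrete conditions \eqref{eqthmmaina}--\eqref{eqthmmaind}: you do not choose $d$, you do not choose the odd integers $m_1<\dots<m_{d+2}$, and you do not verify that any particular $p_i,q_i$ satisfy the $\sba{\cdot}{\cdot}$ or $\tra{\cdot}{\cdot}$ constraints. Saying ``a maximal antichain in $\{0,1\}^d$ has size roughly $2^d/\sqrt d$'' is not the same as checking \eqref{eqthmmaina}--\eqref{eqthmmainc}, and the theorem is not phrased in terms of a free choice of $u^2$ id-quadruples with ad hoc incomparability conditions.

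The paper's argument is short and explicit. One may assume $u=2v$ is even (Remark~\ref{remarkcZgBnjT} lets you pass to a larger $u$). Set $d:=4v+1$ and $m_i:=8v+2i-1$ for $i\in\{1,\dots,v\}$; drop the indices $i>v$ via $\pair{p_i}{q_i}=\pair00$ (again Remark~\ref{remarkcZgBnjT}). Using \eqref{eqSprNrskzPnnmDlrCjsX} one checks $\sba d{m_1-1}$, $\sba{m_2-d}{m_2-d-1}$, and $\sba{d+3-i}{m_i-d-1}$ are all $\geq 4v=2u$, so the equalities in \eqref{eqthmmaina}--\eqref{eqthmmainc} permit $p_i=q_i=u$. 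Since $n_1',n_1'',\dots,n_v',n_v''$ are then $2v=u$ consecutive integers beginning at $m_1+4=8v+5$, Theorem~\ref{thmmain}(B) (with the $w_i$ reduced to $1$ by Remark~\ref{remarkcZgBnjT}) gives the result for every $n\geq 8v+5$. What is missing from your write-up is exactly this: a choice of parameters and a verification of the stated hypotheses.
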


As a further appetizer for our theorem, we note the following.

\begin{remark}\label{remarkcZgBnjT} Let $L_1$, \dots, $L_t$ be lattices. If the direct product $L_1\times\cdots\times L_t$ is four-generated, then for each  $i\in\set{1,\dots,t}$ and for every $J\subseteq \set{1,\dots, t}$, the lattice  $L_i$ and the direct product $\prod_{j\in J}L_j$ are at most four-generated.
Also (and in particular), if the direct product $L_1^{s_1}\times\cdots\times L_t^{s_t}$  of direct powers is four-generated, $J\subseteq \set{1,\dots,t}$, and $r_j\leq s_j$ for all $j\in J$, then  $\prod_{j\in J}L_j^{r_j}$ is  four-generated, too.
\end{remark}

We know from Z\'adori~\cite{zadori} that $\Part n$ is not 
three-generated for $n\geq 5$. Hence, 
Remark~\ref{remarkcZgBnjT} gives that  our theorem  implies the Strietz--Z\'adori result stating the $\gnu n$ from \eqref{pbxgnujl} is at least 1 provided $5\leq n\in\NN$. 
Since the natural projection homomorphisms $L_1\times\dots\times L_t\to L_i$ and  $L_1\times\dots\times L_t\to \prod_{j\in J}L_j$ send  generating sets to  generating sets, Remark~\ref{remarkcZgBnjT} is trivial and needs no  separate proof.
Yet we make another remark before introducing some notation that are needed to formulate our theorem. In Cz\'edli~\cite{czgparallel}, where only some direct powers of partition lattices were proved to be four-generated, each of the  configurations that gave rise to the direct factors were of the same width. Hence, we knew that when the values of the terms going to the right just abutted those of the terms going to the left in one of the direct factors, then the same happened in all direct factors. Now the difficulty is bigger since 
we have to defend our argument from the threat that the values mentioned above just abut in one direct factor but fully overlap in another factor.

The \emph{airiness} $\air {\vec x}$ of a bit vector $\vec x=\tuple{x_1,x_2,x_3,\dots, x_t}$ is the largest integer $u$ such that there are $u$ consecutive zeros among the $x_1,x_2,\dots, x_t$. That is, if there is a $j\in\set{1,\dots, t-u+1}$ such that $x_j=x_{j+1}=\dots =x_{j+u-1}=0$. If $\vdim{\vec x}$ is fixed, then $\vec 0:=\tuple{0,0,\dots,0}$ has the maximal arity, $\dim(\vec x)$, while $\air{\tuple{1,1,\dots,1}}=0$. 
Bit vectors of a fixed \emph{dimension}  form a boolean lattice with respect to the componentwise ordering.
For positive integers
$u$ and $t$, 
\begin{equation}\left.
\parbox{8.2cm}{let $\DBV u t$ be the set of all bit vectors of the form $\tuple{1,x_2,x_3,\dots, x_t}$ with airiness strictly less than $u$.}\,\,\right\}
\label{eqpbxZFUdmszmznflkKbL}
\end{equation}
The acronym comes from ``Dense Bit Vectors'' as we think of bit vectors with small airiness as dense ones. 
Since we are going to use large antichains of the poset $\DBV u t$, we define
\begin{equation}
\sba u t:=\max\set{ |X|: X\text{ is an antichain in }\DBV u t };
\label{eqsstsmsNklpdRrvsK}
\end{equation}
the acronym comes from ``Size of one of the Biggest Antichains''.
We also define
\begin{align}
\parbox{9cm}{
$\TRA u t:=\{\pair X Y: X$  and $Y$ are antichains in\\
\phantom{mmmm} $\DBV u t$  
and $(\forall\vec x\in X)\,(\forall \vec y\in Y)(\vec x\not\leq\vec y)\}$ and
}
\label{eqxzbtSWslSGqlRdkLLvN}
\\
\parbox{9cm}{
$\tra u t:=\{\pair {|X|} {|Y|}: \pair X Y \in \TRA u t\}$;
}\label{eqtrLhsjfzltgyl}
\end{align}
here the acronym comes from ``Two Related Antichains''.
A systematic study of $\sba u t$ and $\tra u t$ with involved tools of combinatorics is not targeted in the present paper; we  only give some easy facts that will be needed later. Namely, 
\begin{align}
&\sba u t   \geq {{t-1}\choose{u-1}}\,\;\text{ if }u-1\leq\upint{(t-1)/2},\label{eqcztsmsTrhltmRa}\\
&\sba u t  = {{t-1}\choose{\upint{(t-1)/2}}}\,\,\text{ if }u-1\geq \upint{(t-1)/2},
\label{eqSprNrskzPnnmDlrCjsX}\\
&\left\langle{{t-1}\choose{i-1}} , {{t-1}\choose{{i}}}\right\rangle \in  \tra u t\,\,\text{ for }i\in\set{1,\dots, \min(u,t)-1},
\label{eqcztsmsTrhltmRb}\\
&\parbox{7.5cm}{if $\pair {x}{y}\in\tra u t$, $0\leq x'\leq x$, and $0\leq y'\leq y$,\\ \phantom{mmmmmmmmmmmm}  then $\pair {x'}{y'}\in\tra u t$,}
\label{eqpbxmNkSgtSnkMhG}
\\
&\sba 1 t=1\text{ and }\set{\pair 01,\pair 10 }\subseteq \tra 1 t\text{ for }t\geq 1, 
\label{eqhmghZtkkrLnspSg}
\\
&\sba u 2=1\text{ and }\pair 1 1 \in  \tra u 2\text{ for }u\geq 2\text{, and} \label{eqcskknwgmLmnxFnT}\\
&\text{if $0<u_1<u_2$ and $0<t$, then $\tra {u_1} t \subseteq \tra  {u_2} t$.}
\label{eqnHspjzlkHMkLbR}
\end{align}
Indeed, \eqref{eqcztsmsTrhltmRa} follows from 
the fact that the bit vectors $\tuple{1,x_2,\dots, x_t}$ with exactly $u-1$ zero entries belong to $\DBV u t$ and  form an antichain. For $i\in\set{1,\dots, \min(u,t)-1}$, the bit vectors $\tuple{1,x_2,\dots, x_t}$ with exactly $i-1$ zero entries and those with exactly $i$ zero entries form an antichain $X_{i-1}$ and an antichain $X_i$, respectively, and 
 $\pair{|X_{i-1}|}{|X_{i}|} \in \tra u t$ implies  \eqref{eqcztsmsTrhltmRb}. 
Let $B_{t-1}$ be the boolean lattice of all $(t-1)$-dimensional bit vectors, and let $B'_{t-1}:=\set{\tuple{1,x_2,\dots,x_t}: \tuple{x_2,\dots,x_t}\in B_{t-1}}$. The map $B_{t-1}\to B'_{t-1}$, defined by 
$\tuple{x_2,\dots,x_t}\mapsto \tuple{1,x_2,\dots,x_t}$ is an order isomorphism. We know from Sperner's Theorem, proved by Sperner~\cite{sperner} and cited in Gr\"atzer~\cite[page 354]{gratzer}, that the $(t-1)$-dimensional bit vectors with exactly $\upint{(t-1)/2}$ zero entries form a maximum sized antichain $X$ in $B_{t-1}$. Let $X'$ be the image of $X$ with respect to the just-mentioned isomorphism. Clearly, $X'$ is a maximum-sized antichain in $B'_{t-1}$. Since 
$X'\subseteq \DBV u t\subseteq B'_{t-1}$ and $|X'|=|X|={{t-1}\choose{\upint{(t-1)/2}}}$, we conclude \eqref{eqSprNrskzPnnmDlrCjsX}. The validity of \eqref{eqpbxmNkSgtSnkMhG} is trivial. 
In order to see the validity of \eqref{eqhmghZtkkrLnspSg}, note that $\DBV 1 t=\set{\vec1}$ is a singleton and both $\DBV 1 t$ and $\emptyset$ are antichains. Since, for $u\geq 2$,  $\DBV u 2=\set{\pair 1 0,\pair 1 1}$ is the two-element chain, \eqref{eqcskknwgmLmnxFnT} is trivial. Finally,
\eqref{eqnHspjzlkHMkLbR} is trivial again since its premise yields that  $\DBV {u_1} t\subseteq \DBV {u_2} t$.

Using the notation given in \eqref{eqsstsmsNklpdRrvsK} and
\eqref{eqtrLhsjfzltgyl}, we are in the position to formulate our main theorem.

\begin{theorem}[Main Theorem]\label{thmmain}\ 

\textup{(A)} If $5\leq n<n'\in\NN$, then the direct product $\Part{n}\times \Part {n'}$ of the partition lattices $\Part {n}$ and $\Part{n'}$ is a four-generated lattice.
 
\textup{(B)} Let $d$ be an \emph{odd} positive integer, and let $m_1,m_2,m_3,\dots, m_{d+2}$  also be \emph{odd} positive integers such that $d\leq m_1<m_2<m_3<\dots <m_{d+2}$ and $m_1\geq 3$.
Also, for $i\in\set{1,2,\dots,d+2}$, let 
\begin{equation}
\wextra_i={{(m_i+3)/2}\choose{2}},
\label{eqlGnwsqjxTr}
\end{equation}
and  let $p_i$ and  $q_i$ be nonnegative integers such that $p_i+q_i>0$ and 
\begin{align}
&\left.
\begin{aligned}
  &p_1+q_1=\sba{d}{m_1-1}\text{, see \eqref{eqsstsmsNklpdRrvsK} and  \eqref{eqcztsmsTrhltmRa}--\eqref{eqcskknwgmLmnxFnT},  or }\cr
  &\pair{p_1}{q_1}\in\tra {d}{m_1-1},\text{ see  \eqref{eqtrLhsjfzltgyl} and  \eqref{eqcztsmsTrhltmRb}--\eqref{eqcskknwgmLmnxFnT},} 
\end{aligned} 
\,\,\right\}\,\,
\label{eqthmmaina}\\ 
&\left.
\begin{aligned}
  &p_2+q_2=\sba{m_2-d}{m_2-d-1}\text{,  or }\cr
  &\pair{p_2}{q_2}\in\tra {m_2-d}{m_2-d-1},
\end{aligned}\,\,\right\}
\label{eqthmmainb}
\end{align}
and for each $i\in \set{3,4,\dots,d+2}$, 
\begin{align}
&p_i+q_i=\sba {d+3-i} {m_i-d-1},\text{   or}
\label{eqthmmainc}\\
&\pair{p_i}{q_i}\in\tra {d+3-i}  {m_i-d-1}.
\label{eqthmmaind}\end{align}
Finally, for $i\in\set{1,\dots,d+2}$, let 
\begin{equation}
\text{$n_i':=m_i+4$ and $n_i'':=m_i+5$.}
\label{eqhCshbfhRvhswkm} 
\end{equation}
Then 
\begin{equation} 
\prod_{i=1}^{d+2} \Bigl( \Part{n_i'}^{p_i}\times \Part{n_i''}^{q_i\wextra_i} \Bigr),
\label{eqdzGrlsTmTnhzk}
\end{equation}
which is a direct product of direct powers of partition lattices, is a four-generated lattice. 
\end{theorem}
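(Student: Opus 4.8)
The plan is to prove (B) — part (A) being the (much simpler) two-factor instance of the same construction — by realising each direct factor of \eqref{eqdzGrlsTmTnhzk} as the equivalence lattice of a Z\'adori-type configuration and then showing that a single four-element generating set for the disjoint union generates the whole direct product. First I would, for every factor occurring in \eqref{eqdzGrlsTmTnhzk}, choose an id-quadruple: a $\Part{n_i'}$-factor gets Z-length $m_i$ and the trivial necktie, a $\Part{n_i''}$-factor gets Z-length $m_i$ (so $n_i''=m_i+5$, in agreement with \eqref{eqhCshbfhRvhswkm}) and one of the $\wextra_i=\binom{\zwid{m_i}}{2}$ nontrivial neckties of \eqref{eqlGnwsqjxTr}, while the bit vectors are read off from antichains of dense bit vectors. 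Concretely, for each $i$ I would fix a pair $\langle X_i,Y_i\rangle$ realising the relevant one among \eqref{eqthmmaina}--\eqref{eqthmmaind} (either a single antichain of size $\sba u t$ as in \eqref{eqsstsmsNklpdRrvsK}, split arbitrarily into $p_i$ and $q_i$ vectors, or a pair $\langle X_i,Y_i\rangle\in\TRA u t$ with $|X_i|=p_i$, $|Y_i|=q_i$, as in \eqref{eqxzbtSWslSGqlRdkLLvN}); these exist by \eqref{eqcztsmsTrhltmRa}--\eqref{eqnHspjzlkHMkLbR}, and the airiness bounds $d$, $m_2-d$, $d+3-i$ together with the dimensions $m_1-1$ and $m_i-d-1$ are exactly what is needed so that, after padding each vector (a leading $1$ plus the right number of fixed further bits) to length $m_i$, the at most $u-1$ consecutive zeros it contains cannot stretch past the place where the ``going-right'' and ``going-left'' fronts of any other configuration would meet. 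Each $\vec x\in X_i$ then contributes one id-quadruple (trivial necktie) and each $\vec y\in Y_i$ contributes $\wextra_i$ id-quadruples (one per necktie); counting gives $p_i$ copies of $\Part{n_i'}$ and $q_i\wextra_i$ copies of $\Part{n_i''}$, i.e.\ precisely \eqref{eqdzGrlsTmTnhzk}.

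Next I would put all these configurations on pairwise disjoint base sets, let $A$ be their union, and let $\alpha,\beta,\gamma,\delta\in\Equ A$ be the componentwise unions of the $\stzga,\stzgb,\stzgc,\stzgd$ attached to the individual configurations by \eqref{eqZgRnGvSwsdr}. Since no defining graph has an edge between distinct components, every join/meet of $\alpha,\beta,\gamma,\delta$ again respects the partition of $A$ into components, so restriction to a component commutes with the lattice operations; consequently the sublattice $S:=\sublat{\alpha,\beta,\gamma,\delta}$ is contained in the direct product $D$ of the lattices $\Equ{\Zfi}$ (identified with the sublattice of $\Equ A$ of equivalences that identify no pair of points from different components), and each component-restriction map $S\to\Equ{\Zfi}$ is onto by Lemma~\ref{lemmaZoddxzG}. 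Because $D$ is atomistic, with atoms the $\equ u v$ for $u,v$ in one common component, it suffices to show that every such atom lies in $S$. Fixing a component $\Zfi$ and an atom $\equ u v$ there, I would take the quaternary term $\stzeuv u v$ built in the proof of Lemma~\ref{lemmaZoddxzG}, for which $\stzeuv u v(\stzbmu)=\equ u v$ inside $\Equ{\Zfi}$ by \eqref{eqzkfnbbBNhJdMr}; then $\stzeuv u v(\alpha,\beta,\gamma,\delta)$ equals $\equ u v$ on $\Zfi$ and, on any other component $\Zpfi$, equals the ``foreign value'' $\stzeuv u v(\stpzbmu)$. The whole point is that \emph{every foreign value is $\enul$}. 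Recall that $\stzeuv u v$ is assembled, via the Circle Principle (Lemma~\ref{lemmaHamilt}), from the side terms and from meets $\stzf_j(\obmu)\cdot{}^\ast\kern-1pt f_{m_\vph+1-j}(\obmu)$ of a term going to the right and a term going to the left of $\vph$ (cf.\ \eqref{eqZrrtgrGth}, \eqref{eqinTrwkhFt}, \eqref{eqdmRtnNdgBlW}). Evaluated at $\stpzbmu$, the right-going part is bounded below a set of initial rungs of $\Zpfi$ by \eqref{eqBGkJggLjSjgygLsMz} and the left-going part above a set of terminal rungs by its mirror image; the incomparability of the chosen bit vectors (within a length group this is the antichain property of $X_i\cup Y_i$, across length groups it is guaranteed by the calibrated airiness of Paragraph~1, and the $\TRA u t$-asymmetry between $X_i$ and $Y_i$ plus the presence/value of the necktie takes care of the residual same-length cases) makes the hypothesis of the ``reaching'' inequality \eqref{eqpbxZhgGtrcshwnhSVkpL} (and of its left-going analogue, and of \eqref{eqskzTskzblGvfrPhkgKzWs}, \eqref{eqszGnslmsLmjTzZbh}) fail, so both fronts stall inside a run of at most $u-1$ zeros before they can meet; hence the two rung blocks in the meet are disjoint, therefore involve disjoint vertex sets of $\Zpfi$, and so $\stzf_j(\stpzbmu)\cdot{}^\ast\kern-1pt f_{m_\vph+1-j}(\stpzbmu)=\enul$. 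Tracing this through the Circle Principle gives $\stzeuv u v(\stpzbmu)=\enul$.

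Having thus shown $\stzeuv u v(\alpha,\beta,\gamma,\delta)=\langle\enul,\dots,\enul,\equ u v,\enul,\dots,\enul\rangle\in S$ for every component and every atom, and $D$ being atomistic with $S\subseteq D$ a sublattice, I conclude $S=D$, and $D$ is isomorphic to the product \eqref{eqdzGrlsTmTnhzk} by the factor count fixed in Paragraph~1; this proves (B). Part (A) is the same argument carried out with exactly two configurations, chosen so that they differ in Z-length or, when the Z-lengths happen to coincide (e.g.\ $\Part 5$ versus $\Part 6$), so that exactly one of them carries a necktie; by Remark~\ref{remarkcZgBnjT} and Z\'adori's non-three-generatedness this also re-derives that $\Part n$ is four- but not three-generated for $n\ge 5$. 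The step I expect to be the real obstacle is precisely the ``every foreign value vanishes'' claim, which is the danger flagged in the remark preceding the theorem: a term tailored to $\Zfi$ must produce the correct atom on $\Zfi$ and simultaneously collapse \emph{nothing} on any other component, even a much longer one, on which the right-going and left-going fronts would cheerfully overlap unless the bit vectors are chosen to make them stall early enough; quantifying ``early enough'' uniformly over all the distinct Z-lengths $m_1<\dots<m_{d+2}$, all pairs of configurations (including the delicate cases where a longer configuration's term is evaluated on a shorter configuration's generators, and where only a necktie distinguishes two equal-length configurations), and all atoms, is exactly what the airiness parameters $d$, $m_2-d$, $d+3-i$ and the antichain/$\TRA u t$ sizes in \eqref{eqthmmaina}--\eqref{eqthmmaind} are engineered to make possible, and the careful verification that these choices really do kill every foreign value is the longest and most technical part of the proof.
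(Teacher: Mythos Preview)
Your overall architecture is right and matches the paper: realise each factor as an $\Equ{\Zfi}$, take $\tuple{\valpha,\vbeta,\vgamma,\vdelta}$ componentwise, and show that for each $\pvph$ there is a quaternary term which yields the generators of $\Equ{\Zpfi}$ on the $\pvph$-component and $\enul$ on every other component. The choice of padded bit vectors and the airiness bookkeeping are also essentially correct.

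The genuine gap is in your ``every foreign value vanishes'' step. You assert that within a fixed length group the antichain property of $X_i\cup Y_i$ makes the fronts stall, but $X_i\cup Y_i$ is \emph{not} required to be an antichain: in the $\tra{}{}$ alternative of \eqref{eqthmmaina}--\eqref{eqthmmaind} only $X_i$ and $Y_i$ are antichains separately, together with the one-sided condition $\vec x\not\le\vec y$ from \eqref{eqxzbtSWslSGqlRdkLLvN}. Nothing forbids $\vec y\le\vec x$ for $\vec y\in Y_i$ and $\vec x\in X_i$, and when this happens the paper explicitly proves (see \eqref{eqpbxmCHglsbrgSk} and \eqref{eqpbxZhdhBvTdnbf}) that for $\vph\in\Psi_j$ and $\pvph\in\Gamma_j$ with $\vec z_\pvph\le\vec z_\vph$ the right-going term $\stpzf_{m+1}$ \emph{does} get through $\Zfi$. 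So your front-stalling argument cannot kill this case, and neither does ``the presence/value of the necktie'' by itself: the going-right and going-left terms, and hence the $\stzg_j$'s and the Circle-Principle terms $\stpzeuv uv$, do not see the necktie at all.

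What the paper actually does to handle this residual case is to build a further term $\stpzkg$ (see \eqref{eqkplPndjTlrsDbkt}--\eqref{eqpHltstszb}) in which the necktie element $c$ appears explicitly through the subterms $\stpzeuv{a_{s'}}{c}$ and $\stpzeuv{a_{t'+1}}{c}$. When $\pvph\in\Gamma_j$ but $\vph\in\Psi_j$ (trivial necktie), these subterms collapse to $\enul$ on $\Zfi$ by \eqref{eqzHfGskgHzwPWRt}--\eqref{eqczshrPbTNXnHqxj}, and a four-point identity \eqref{equchnGyksGncPtlWmcs} then forces $\stpzkg(\stzbmu)=\enul$; this is the content of \eqref{eqzhGszpfCsLjpHlG}. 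Only after this does the paper bootstrap, via \eqref{eqmTrkfjzSgkhLtksRlca}--\eqref{eqmTrkfjzSgkhLtksRlck}, to terms that isolate $\stpzga,\stpzgb,\stpzgc,\stpzgd$ in the $\pvph$-component with $\enul$ elsewhere. Your sketch skips this entire mechanism and so, as written, does not establish the key separation property.
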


Note that \eqref{eqdzGrlsTmTnhzk} can also be written as 
$\prod_{i=1}^{d+2}  \Part{n_i'}^{p_i}\times \prod_{i=1}^{d+2} \Part{n_i''}^{q_i\wextra_i}$. Before proving our theorem, we illustrate its meaning by an example.

\begin{example}\label{example2020}
While part (B) of Theorem~\ref{thmmain} does not give much for small partition lattices, it gives  more for large ones if not too many of them are considered. For example, part (B) implies that 
\begin{equation}
\prod_{n=1011}^{2020}\Part n^{{10^{127}}}
\label{eqdpXtbSlmBwWpS}
\end{equation}
is a four-generated lattice.
To see this, let $d:=579$ and let $m_i:= 1005 +2i$ for $i\in\set{1,2,\dots, 505}$. We do not use the $m_i$ for $i=506,\dots, d+2$; in other words, for $i\in\set{506, \dots, d+2}$, we pick the pair $\pair {p_i}{q_i}=\pair 0 0$ from 
$\tra {d+3-i}  {m_i-d-1}$, see \eqref{eqpbxmNkSgtSnkMhG}--\eqref{eqhmghZtkkrLnspSg} and \eqref{eqthmmaind}. (See also Remark~\ref{remarkcZgBnjT} since the theorem itself does not permit that $\pair {p_i}{q_i}=\pair 0 0$.) Using the estimates \eqref{eqcztsmsTrhltmRa}--\eqref{eqSprNrskzPnnmDlrCjsX} and the equations in \eqref{eqthmmaina}--\eqref{eqthmmainc} and letting $p_i=q_i$ for $i\in\set{1,2,\dots, 505}$, we obtain by computer algebra (namely, by a Maple worksheet available from the first author's website)   that all these $p_i$ and $q_i$ are at least $10^{127}$. Note that the initial choice $d:=579$ was not a random one because other choices would have yielded smaller exponents.
By Remark~\ref{remarkcZgBnjT},  we can change $w_i$ from \eqref{eqlGnwsqjxTr} to 1. Therefore, it follows from 
Theorem~\ref{thmmain} that the lattice given in \eqref{eqdpXtbSlmBwWpS} is four-generated.
\end{example}

\begin{proof}[Proof of Theorem~\ref{thmmain}] In addition to earlier experience with generating sets of partitions lattices, see the References section, an idea is also borrowed from locksmithing. In a traditional pin tumbler cylinder lock,
there are \emph{pins} of different (to be more precise,  not necessarily equal) lengths, and a key can turn the plug if and only if the heights of its \emph{ridges} match these lengths.
The $\sum \set{\kequ {\stzI_i}: x_i=1,\,\,1\leq i \leq m}$ part in the definition of $\stzgd$, see \eqref{eqZgRnGvSwsdr},  will play a role similar to that of the pins in a cylinder lock. The is why we call $\vec z_\vph$ a pin vector. The meetands $\ode$ in \eqref{eqZgbVrSdGnNzkb} will play the role of the heights of the above-mentioned ridges. Unfortunately, the situation in the proof is not as pleasant as in locksmithing. Namely, while a single ridge of wrong size of a real key fully prevents the plug of a real lock from turning around, an extra meetand in a wrong place paralyzes the sequence of our terms only for a moment in general and further meetands
are necessary to prevent the ``progress'' to the right later.

Let us enlighten the situation from another aspect. When we use a real key with code vector $\vec x$ to open a real pin tumbler cylinder lock with pin  vector $\vec y$, then the key and the lock are usually assumed to have the same ``length'', that is, 
$\dim(\vec x)=\dim(\vec y)$. If not, then  $\inseg{\vec x}{\dim(\vec y)}$ has the main effect; see \eqref{eqZhghnTkzDjk} for this notation. For a real key, possibly longer than needed, the necessary and sufficient condition to open the lock is the \emph{equality} 
$\inseg{\vec x}{\dim(\vec y)}=\vec y$. In our ``\emph{virtual locksmithing}'', the corresponding  necessary and sufficient condition  is, very roughly saying, 
\begin{equation}
\text{the \emph{inequality} $\,\inseg{\vec x}{\dim(\vec y)}\leq \vec y$;}
\label{eqtxthmBhZrsSJmLs}
\end{equation}  
note that \eqref{eqpbxZhgGtrcshwnhSVkpL}, 
\eqref{eqtxtZghthnGwpndJCn},  \eqref{eqpbxBlRSblJsNg},
\eqref{eqbRmrmlnDrbRsrNfsT},
\eqref{eqpbxBsmsgzsghWq}, and  \eqref{eqpbxmCHglsbrgSk}
are or will be connected to  \eqref{eqtxthmBhZrsSJmLs}.
The connection of \eqref{eqthmmaina}--\eqref{eqthmmaind} in the theorem with antichains occurring in \eqref{eqsstsmsNklpdRrvsK}--\eqref{eqnHspjzlkHMkLbR} is explained by our intention to exclude the strict inequality from \eqref{eqtxthmBhZrsSJmLs}; this is why our construction is involved.  Below, in the first part of the proof, we deal with ``virtual locksmithing". Let $\vph$ be an id-quadruple; see \eqref{eqpbxHnstrNKhmrrDm}. 

For a $\vph$-configuration $\Zfi$, defined in \eqref{eqZgRnGvSwsdr}, and a term $\stpzf_i$ going to the right, defined in \eqref{eqZgbVrSdGnNzka}--\eqref{eqZgbVrSdGnNzkb} (with $\vph$ instead of $\pvph$), 
\begin{equation}
\text{we say that $\stpzf_i$\,\, \emph{gets through}\,\, $\Zfi\,$ if $\,\stpzf_i({\stzbmu})\geq \kequ{\stzI_{m_\vph+1}}$,}
\label{eqtxTGtsThrgH}
\end{equation}
where $\stzbmu$ and $\kequ{\stzI_{m_\vph+1}}$ have been defined in \eqref{eqMlKnySrDGttSz} and in (and after) \eqref{eqtxtMhRskSlHgW}, respectively. 
It follows immediately from \eqref{eqBGkJggLjSjgygLsMz} that 
\begin{equation}
\text{if $\stpzf_i$ gets through $\Zfi$, then $i\geq m_\vph+1$.}
\label{eqtxtZghthnGwpndJCn}
\end{equation}

Let us agree that when we deal with bit vectors, then $0^i$ and $1^i$ will denote the $i$-dimensional zero vector $\tuple{0,\dots,0}$ and unit vector $\tuple{1,\dots,1}$, respectively. Also,  for $\vec x=\tuple{x_1,\dots,x_i}$ and 
$\vec y=\tuple{y_1,\dots,y_j}$, the concatenation $\tuple{x_1,\dots,x_i,y_1,\dots,y_j}$ is denoted by 
$\vec x\lev \vec y$. With this convection, we can denote, for example, the eleven-dimensional vector $\tuple{1,0,0,0,1,1,0,1,1,1,1}$ by $1\lev 0^3\lev 1^2\lev 0\lev 1^4$ and an arbitrary vector beginning with two 0 components by $0^2\lev \vec z$.
Note that the concatenation with the ``empty'' vector $0^0=1^0$ makes no effect; for example, $1^0\lev \vec x=\vec x =\vec x\lev 0^0$.

For a while, let $\vph$ with all of its ingredients mentioned in \eqref{eqpbxHnstrNKhmrrDm} as well as $\pvph$ from \eqref{eqpbxnhMrtnkVnsRlTn} be fixed. In particular, $m=m_\vph$, $\vec z=\vec z_\vph$, and $\pvec z=\vec z_\pvph$.
As a counterpart of \eqref{eqtxtZghthnGwpndJCn}, we assert that 
for any nonnegative integers  $i\leq m_\pvph+1$ and  $p$  and any positive integer $q$, 
\begin{equation}\left.
\parbox{7.4cm}{if $\vec z$ is of the form $\vec z= 1^p\lev 0^q\lev \vec y$\, and $\air{\pvec z}<q$, then $\stpzf_i$ does not get through  $\Zfi$,}
\,\,\right\}
\label{eqpbxBlRSblJsNg}
\end{equation}
no matter how large $m'=m_\pvph$ and  $i\leq m'+1$ are. 
Note that the possibility $\dim(\vec y)=0$, that is $\dim(\vec z)=p+q$, is allowed. 
In order to  show the validity of  \eqref{eqpbxBlRSblJsNg}, assume that $0\leq j\leq i$. We are going to define  $\eff j$, the \emph{effectiveness} of a subscript $j$ with respect to $\stpzf_i$ and $\Zfi$
or, shortly,   with respect to \eqref{eqpbxBlRSblJsNg}, as follows. 
Let $\eff j$ be 
\begin{equation}
\text{the smallest $u$ such that }0\leq 
u\leq m+1 \text{ and } \stpzf_j(\stzbmu) \leq \sum_{\othv=0}^u \kequ{\stzI_{\othv}}.
\label{eqZhgmnHfLGz}
\end{equation}
By  \eqref{eqBGkJggLjSjgygLsMz}, $\eff j$ exists and 
\begin{equation}
\eff j\leq \min(j,m+1). 
\label{eqczgShRdkTsChszlGRk}
\end{equation}
Visually, we can think of $\eff j$ as the ``distance'' that measures ``how far the equivalence relation $\stpzf_j(\stzbmu)$ has gone'' from $\kequ{\stzI_0}$. 
Since $m=\dim(\vec z)=\dim(1^p\lev 0^q\lev \vec y)\geq p+q$, \eqref{eqpbxBlRSblJsNg} will clearly follow from the way we defined our concepts in \eqref{eqtxTGtsThrgH} and \eqref{eqZhgmnHfLGz} if we manage to prove that  
\begin{equation}
\text{if $\air{\pvec z}<q$ and $j\leq i$, then $\eff j<p+q$  with respect to \eqref{eqpbxBlRSblJsNg}.}
\label{eqpjbWhGhTjfhN}
\end{equation}
Indeed, \eqref{eqpjbWhGhTjfhN}, which has not been proved yet,  implies   \eqref{eqpbxBlRSblJsNg} by letting $j:=i$. 
Similarly to the straightforward proof of \eqref{eqBGkJggLjSjgygLsMz}, it follows easily that for $0\leq u\leq m+1$ and $j\leq m'$,
\begin{equation}
\stpzf_j(\stzbmu)   \leq \sum_{\othv=0}^u \kequ{\stzI_{\othv}}  \text{  implies that  } \stpzf_{j+1}(\stzbmu)  \leq  \sum_{\othv=0}^{\min(u+1,m+1)} \kequ{\stzI_{\othv}}. 
\label{eqrzhGhfkdtmszlDtm}
\end{equation}
This yields that, for all $j\in\NN$ such that $j < i\leq m'+1$,
\begin{equation}
\eff{j+1}\leq \eff j +1.
\label{eqhTknSnTnzKtsVknLflC}
\end{equation}

Now, for the sake of contradiction, suppose that  \eqref{eqpjbWhGhTjfhN} fails. Then we know that  $\air{\pvec z}<q$ but
\begin{equation}
\text{there is a $j$ such that  $j\leq i$ and $\eff j \geq p+q$.}
\label{eqFjtNkmnnRfmrGhnstvsG}
\end{equation}
We know from \eqref{eqhTknSnTnzKtsVknLflC} that  effectiveness increases ``slowly'' and so it ``cannot jump over numbers''. This reminds us a continuous $\mathbb R\to\mathbb R$ function; later, we will refer to this property of effectiveness as ``continuity''. 
Continuity and \eqref{eqFjtNkmnnRfmrGhnstvsG} yield a least subscript $u$ with $\eff u=p$. Similarly, there exists a 
least subscript $w$ with $\eff w=p+q$. As a consequence of \eqref{eqhTknSnTnzKtsVknLflC}, note that 
\begin{equation}
\text{for any $u'<u$ and $w'<w$, $\eff{u'}<p$ and $\eff{w'}<p+q$.}
\label{eqtxthmsrznPrsqWvkXccVrZ}
\end{equation}
In particular,   $u<w$. Note another consequence of \eqref{eqhTknSnTnzKtsVknLflC}: $w-u\geq q$, so  
\begin{equation}
w-q\geq u\geq 0\,\, \text{ and }\,\,   w-(q-1)\geq u+1\geq 1.
\label{eqsHcSnRjTkPsTmJfdtNgh}
\end{equation}
We claim that, for every $v < w$,
\begin{equation}
\text{if $\eff {v}<p+q$ and $z'_{v+1}=1$, then $\eff{v+1}\leq p$.}
\label{eqtxthfjbrkzlKrhGv}
\end{equation}
Since $\eff {v}\leq p+q-1$, we have that  $\stpzf_v(\stzbmu)   
\leq \sum\set{\kequ{\stzI_\othv}: \othv\leq p+q-1}$. Combining this inequality with \eqref{eqrzhGhfkdtmszlDtm}, we obtain that 
$\stpzf_{v+1}(\stzbmu)   
\leq \sum\set{\kequ{\stzI_\othv}: \othv\leq p+q}$.
From $z'_{v+1}=1$ and  \eqref{eqZgbVrSdGnNzkb}, we obtain that 
$\stpzf_{v+1}(\stzbmu)   \leq \stzgd$. The two inequalities we have just obtained yield that  
\begin{equation}
\stpzf_{v+1}(\stzbmu)   \leq \stzgd \cdot  \sum\set{\kequ{\stzI_\othv}: \othv\leq p+q}.
\label{eqlMsndtRrhDkp}
\end{equation}
The nonsingleton blocks of the second meetand on the right of \eqref{eqlMsndtRrhDkp} are  
\begin{align}
&\kequ{\stzI_0}+\kequ{\stzI_2}+\dots+\kequ{\stzI_{2\loint{(p+q)/2}}}=
\kequ{a_0,a_1,\dots,a_{1+\loint{(p+q)/2}}} \text{ and}
\label{eqZhgRhBmSrZszpdfa}
\\
&\kequ{\stzI_1}+\kequ{\stzI_3}+\dots+\kequ{\stzI_{2\loint{(p+q+1)/2}-1}}=
\kequ{b_0,b_1,\dots,b_{\loint{(p+q+1)/2}}}.
\label{eqZhgRhBmSrZszpdfb}
\end{align}
So these nonsingleton blocks are initial segments of the upper horizontal line and the lower horizontal line in Figure~\ref{figzbrdd}. Note that $\set{2\loint{(p+q)/2},\,2\loint{(p+q+1)/2}-1 } =\set{p+q-1, p+q}$.
By $z_{p+1}=z_{p+2}=\dots=z_{p+q}=0$ and the construction of  $\Zfi$, see  \eqref{eqZgRnGvSwsdr}, $\stzgd$ collapses none of $\stzI_{p+1}$, \dots, $\stzI_{p+q}$, which are on the right of the horizontal segments given in \eqref{eqZhgRhBmSrZszpdfa} and \eqref{eqZhgRhBmSrZszpdfb}.  This implies that the inequality $\othv\leq p+q$ in
\eqref{eqlMsndtRrhDkp} can be replaced by $\othv\leq p$. 
Hence, $\stpzf_{v+1}(\stzbmu)   \leq  \sum\set{\kequ{\stzI_\othv}: \othv\leq p}$. Therefore,  $\eff{v+1}\leq p$, proving \eqref{eqtxthfjbrkzlKrhGv}.

Next, iterating \eqref{eqhTknSnTnzKtsVknLflC}, we obtain the 
\begin{equation}
\eff{j+r}\leq \eff j +\othv\text{ whenever }0\leq j\leq j+\othv\leq i\leq m'+1.
\label{eqsrKkcsmTgnGjsRt}
\end{equation}
We claim that 
\begin{equation}
p<\eff{w-\othv}\,\,\text{ for all }\,\, \othv\in\set{0,1,\dots, q-1};
\label{eqbRmrmlnDrbRsrNfsT}
\end{equation}
note that $w-\othv>0$ by \eqref{eqsHcSnRjTkPsTmJfdtNgh}.
To verify \eqref{eqbRmrmlnDrbRsrNfsT}, suppose the contrary. Then there is an $\othv\in\set{0,1,\dots, q-1}$ such that 
\[
p+q=\eff w=\eff{(w-\othv)+r}\leqref{eqsrKkcsmTgnGjsRt} \eff{w-\othv}+\othv\leq
p+\othv<p+q,
\]
which is a contradiction. This shows the validity of \eqref{eqbRmrmlnDrbRsrNfsT}. 

Next, for $\othv\in\set{0,1,\dots, q-1}$, let $v_\othv:= w-\othv-1$; note that $v_\othv\geq 0$ by  \eqref{eqsHcSnRjTkPsTmJfdtNgh}.
Since $\eff{v_\othv+1}=\eff{w-\othv}>p$ by  \eqref{eqbRmrmlnDrbRsrNfsT}, $v_\othv$ fails to satisfy the conclusion of  \eqref{eqtxthfjbrkzlKrhGv}. Hence, $v_\othv$ fails to satisfy the premise of  \eqref{eqtxthfjbrkzlKrhGv} either. However, we know from \eqref{eqtxthmsrznPrsqWvkXccVrZ} and $v_\othv<w$ that  $\eff{v_\othv}<p+q$. 
Hence the only way that the premise of  \eqref{eqtxthfjbrkzlKrhGv} fails is that $z'_{v_\othv+1}=0$. This gives us $q$ consecutive zeros in $\pvec z$ since $\othv$ ranges in $\set{0,1,\dots, q-1}$. So $\air{\pvec z}\geq q$. This inequality contradicts the $\air{\vec z}<q$ part of \eqref{eqpjbWhGhTjfhN}, which we have assumed. Thus, we have proved  \eqref{eqpjbWhGhTjfhN}.  In virtue of the sentence following   \eqref{eqpjbWhGhTjfhN}, we have also proved \eqref{eqpbxBlRSblJsNg}.

For the particular case  $\pair pq=\pair 01$, a stronger variant of \eqref{eqpbxBlRSblJsNg} holds; namely, we claim that 
for any nonnegative integer $i$ and any positive integer $q$, 
\begin{equation}\left.
\parbox{9.7cm}{if $\vec z=\vec z_\vph$  and $\pvec z=\vec z_\pvph$ are of the forms
 $\vec z=  0\lev \vec y$\, and  $\pvec z=  1\lev \pvec y$,   respectively, then $\stpzf_i$ does not get through  $\Zfi$,}
\,\,\right\}
\label{eqpbxBsmsgzsghWq}
\end{equation}
provided $i\leq m_{\pvph}+1$ (since otherwise $\stpzf_i$ is undefined). In order to verify this, observe that $\stzgd$ does not collapse $\stzI_1$ since the first bit of $\vec z_\vph$ is 0, see \eqref{eqZgRnGvSwsdr}. Also, $\stzgd$ fails to collapse $\stzI_0$, regardless what $\vph$ is. Hence,
$(\kequ{\stzI_0}+\kequ{\stzI_1})\cdot \stzgd=\enul$, where $\enul$ (still) denotes the smallest element of $\Equ{\Zfi}$. 
This together with 
\eqref{eqBGkJggLjSjgygLsMz} (in which the role of $\vph$ and that of $\pvph$ are now interchanged) and \eqref{eqZgbVrSdGnNzkb} give that $\stpzf_1(\stzbmu)=\enul$. Since both $(\enul +\stzgb)\stzga$ and  $(\enul +\stzgc)\stzga$  (even without the meetand $\stzgd$) are equal to $\enul$,
a trivial induction based on  \eqref{eqZgbVrSdGnNzkb} yields that 
$\stpzf_i(\stzbmu)=\enul$ for all meaningful $i$. This implies  \eqref{eqpbxBsmsgzsghWq}.

Next, we claim that for arbitrary id-quadruples $\vph$ and $\pvph$, the following holds; the notation given in \eqref{eqpbxHnstrNKhmrrDm} and \eqref{eqpbxnhMrtnkVnsRlTn} will be in effect.
\begin{equation}\left.
\parbox{6.7cm}{Assume that $m:=m_\vph=m_\pvph$. Then 
$\stpzf_{m+1}$  gets through  $\Zfi$ if and only if  $\pvec z\leq \vec z$.}
\,\,\,\right\}
\label{eqpbxmCHglsbrgSk}
\end{equation}
In order to prove \eqref{eqpbxmCHglsbrgSk}, note that we already know from \eqref{eqpbxZhgGtrcshwnhSVkpL} that $\pvec z\leq \vec z$ implies that $\stpzf_m$  gets through  $\Zfi$.
Hence, it suffices to deal with the ``only if'' part of \eqref{eqpbxmCHglsbrgSk}. To do so, assume that  $\pvec z\not\leq \vec z$. Pick a subscript $r\in\set{1,\dots,m}$ such that $z'_r=1$ but $z_r=0$; this is the place where $\stpzf_{m+1}$  will be ``delayed'' in $\Equ{\Zfi}$.
 With  more details and computing in $\Equ{\Zfi}$, we know from  \eqref{eqBGkJggLjSjgygLsMz} that  
\begin{equation}
\stpzf_{r-1}(\stzbmu)  \leq \kequ{\stzI_j:0\leq j\leq r-1}.
\label{eqdzhBfjnwnRmRks}
\end{equation}
We know the same for $r$ instead of $r-1$ in \eqref{eqdzhBfjnwnRmRks}; however, since $\ode$ is a meetand of the term $\stpzf_{r}(\obmu)$ 
(because of \eqref{eqZgbVrSdGnNzkb} and $z'_r=1$) but $\stzgd$ fails to collapse $I_r$ since $z_r=0$, it follows that 
\begin{equation}
\stpzf_{r}(\stzbmu)  \leq \kequ{\stzI_j:0\leq j\leq r-1}.
\label{eqdzhcmsTksZr}
\end{equation}
Let, say, $r$ be odd; the even case is similar. Then, using our construction (and the dotted ovals in Figure~\ref{figZgRjmCbsWs} if $r=5$) 
at the inequality $\leq^\bullet$ below, we obtain that 
\begin{align}
\stpzf_{r+1}(\stzbmu) & \leqref{eqZgbVrSdGnNzkb} 
\bigl(\stpzf_{r}(\stzbmu)+\stzogc(\stzbmu) \bigr)\stzga
\cr
& \leqref{eqmCsltcGrna} \bigl(\stpzf_{r}(\stzbmu)+\stzgc \bigr)
\stzga \cr
& \leqref{eqdzhcmsTksZr}
\bigl(\kequ{\stzI_j:0\leq j\leq r-1}+\stzgc) \bigr)\stzga
\cr
& \kern 6pt \leq^\bullet \kequ{\stzI_j:0\leq j\leq r-1}.
\label{eqhbBkjSwTb}
\end{align}
Comparing  \eqref{eqdzhBfjnwnRmRks} and \eqref{eqhbBkjSwTb},
we can intuitively see that $\stpzf_{m+1}$ is delayed by (at least) two. More precisely, it is clear that 
if we modify the induction giving \eqref{eqBGkJggLjSjgygLsMz}
so that we use \eqref{eqhbBkjSwTb} rather than $\stpzf_{r+1}(\stzbmu)\leq  \kequ{\stzI_j:0\leq j\leq r+1}$ at the right moment, then we obtain that $\stpzf_{i+1}(\stzbmu)\leq  \kequ{\stzI_j:0\leq j\leq i-1}$ holds for $r\leq i\leq m$.  
In particular, $\stpzf_{m+1}(\stzbmu)\leq  \kequ{\stzI_j:0\leq j\leq m-1}$, which shows that $\stpzf_{m+1}$  does not get through  $\Zfi$. Thus, we have shown the validity of 
\eqref{eqpbxmCHglsbrgSk}.

Now that we are sufficiently familiar with virtual locksmithing, we turn our attention to the task of selecting appropriate configurations $\Zfi$. 
However, first we point out the following. If $\vph$ and $\pvph$ are distinct id-quadruples such 
$m_\vph=m_\pvph$, then we need a ``pin'', that is, we need a common position in $\vec z_\vph$ and $\vec z_\pvph$ that allows us to differentiate between $\Zfi$ and $\Zpfi$ by the ``key ''terms $\stzf_{m_{\vph+1}}$ and $\stpzf_{m_{\pvph+1}}$; these terms play the role of real keys  real locksmithing.
%
Also
we have to reserve additional positions in $\vec z_\vph$
for ``pins'' (that is, bits) to guarantee that $\Zfi$ can be distinguished from $\Zpfi$ by appropriate key term when $m_\pvph>m_\vph$. So we have to reserve pins (that is, positions) for two different purposes: 
to differentiate between configurations of the same Z-length, and to do this in case of different Z-lengths. These two purposes require distinct strategies, which can work simultaneously only if $m_\vph=\dim(z_\vph)$ is not too small.  
Fortunately, it will be clear from the  proof of part (B) of the theorem that 
\begin{equation}\left.
\parbox{8.9cm}{our task simplifies a lot of if we form the direct product of two distinct equivalence lattices, that is, if only two id-quadruples occur in our construction;}
\,\,\right\}
\label{eqpbxhmTnmcslFzVpjn}
\end{equation}
because then only one pin, that is, only one bit is necessary
to differentiate, and this can be done in a much easier way.

Next, we are going to present all the technical details for part (B) of the theorem.  After that part (B) has been proved, \eqref{eqpbxhmTnmcslFzVpjn} will allow us to omit the painful technicalities and give only a less formal and sketchy argument for part (A).

In order to make our toolkit applicable, we are going to redefine the direct product \eqref{eqdzGrlsTmTnhzk} so that instead of taking partition lattices of unstructured sets, we take equivalence lattices of appropriate configurations.  
For $j\in\set{1,2,\dots, d+2}$, we define 
\begin{equation}
k_j:=\zwid {m_j},\,\text{} W_j:=\set{\pair s t: 0\leq s<t<k_j};\text{ clearly, }|W_j|=w_j;
\label{eqZrmhcvTPrpLro}
\end{equation}
see \eqref{eqtxtHskzftmmCs} and \eqref{eqlGnwsqjxTr}.
For $j\in\set{1,2,\dots, d+2}$,  \eqref{eqthmmaina}--\eqref{eqthmmaind}  together with \eqref{eqsstsmsNklpdRrvsK}--\eqref{eqtrLhsjfzltgyl}  allow us to select antichains $X_j$ and $Y_j$ such that
\begin{align}
&|X_j|=p_j,\quad |Y_j|=q_j,\quad \vec x\not\leq \vec y\,\label{eqZrmhcvTPrpLra}
\\
&\kern 3cm \text{for all }j\in\set{1,\dots,d+2},\,\, \vec x\in X_j\text{, and }\,\vec y\in Y_j,
\cr
&X_1,Y_1\subseteq \DBV{d}{m_1-1},\,\, X_2,Y_2\subseteq \DBV{m_2-d}{m_2-d-1},\text{ and}
\label{eqZrmhcvTPrpLrb}
\\
&X_i,Y_i\subseteq \DBV{d+3-i} {m_i-d-1}\text{ for }i\in \set{3,4,\dots,d+2}.
\label{eqZrmhcvTPrpLrc}
\end{align}
In connection with \eqref{eqZrmhcvTPrpLra}, note that 
$\vec x\not\leq \vec y$ is clear by \eqref{eqxzbtSWslSGqlRdkLLvN}  and \eqref{eqtrLhsjfzltgyl}  if we go, say, after \eqref{eqthmmaind}. The inequality $\vec x\not\leq \vec y$ is also clear if we go after, say, 
\eqref{eqthmmainc}, because then we simply partition an antichain of size $p_i+q_i$ into a disjoint union $X_i\cup Y_i$; see \eqref{eqsstsmsNklpdRrvsK}. We also need the following antichains:
\begin{align}
& X_1^+:=\set{0\lev \vec x: \vec x\in X_1},\qquad\qquad Y_1^+:=\set{0\lev \vec y: \vec y\in Y_1},
\label{eqZhgBhvgRtkRpRa}
\\
&   X_j^+:=\set{1^{j-1}\lev 0^{d+2-j}\lev \vec x: \vec x\in X_j}
\,\text{ for }j\in\set{2,\dots,d+2},
\label{eqZhgBhvgRtkRpRb}
\\
&  Y_j^+:=\set{1^{j-1}\lev 0^{d+2-j}\lev \vec y: \vec y\in Y_j}
\,\,\,\text{ for }j\in\set{2,\dots,d+2}.
\label{eqZhgBhvgRtkRpRc}
\end{align}
Associated with our antichains, we define some sets of id-quadruples as follows. For $j\in\set{1,2,\dots, d+2}$, let
\begin{align}
& \Psi_j:=\set{\tuple{m_j,1,1,\vec z}: \vec z\in X_j^+},\quad\Gamma_j:=\set{\tuple{m_j,s,t,\vec z}: \vec z \in Y_j^+ ,\,\, \pair s t\in W_j},
\label{eqztspRstlFtdPlTcrp}
\\
& \Phi_j:= \Psi_j\cup \Gamma_j,\quad\text{ and }\quad\Phi:=\bigcup_{i=1}^{d+2}\Phi_i . \label{eqhpfpflvdjknenvKnwwm} 
\end{align}
Note at this point that, for $j\in\set{2,3,\dots, d+2}$, every  $\vec z\in X_j^+\cup  Y_j^+$ is of dimension
$(j-1)+(d+2-j)+ (m_j-d-1)=m_j$ while the vectors in $ X_1^+\cup  Y_1^+$ are of dimension $m_1$. Hence, $\Phi$ is indeed a set of id-quadruples.
Armed with $\Phi$ defined in \eqref{eqhpfpflvdjknenvKnwwm}, we define a lattice $L$ as follows.
\begin{equation}
L:=\prod_{\vph\in \Phi}\Equ\Zfi.
\label{eqmslgzRnprmrFzrmhSdmSmdR}
\end{equation}
We went after  \eqref{eqlGnwsqjxTr}-- \eqref{eqthmmaind} when defining
the $W_j$ in \eqref{eqZrmhcvTPrpLro}, the antichains \eqref{eqZrmhcvTPrpLrb}--\eqref{eqZhgBhvgRtkRpRc}, and the sets of id-quadruples in \eqref{eqztspRstlFtdPlTcrp}--\eqref{eqhpfpflvdjknenvKnwwm}. Therefore, 
$|\Psi_j|=p_j$ and $|\Gamma_j|=q_jw_j$ for all $j\in\set{1,2,\dots, d+2}$, and we conclude that 
the direct product \eqref{eqdzGrlsTmTnhzk} and $L$ defined in \eqref{eqmslgzRnprmrFzrmhSdmSmdR} are isomorphic lattices.  Thus, it suffices to prove that $L$ is four-generated. 
The elements of $L$ are $|\Phi|$-dimensional vectors; equivalently, they are 
choice functions $\vec\tau\colon  \Phi \to \bigcup_{\vph\in \Phi}\Equ\Zfi$, that is, functions with the property $\vec\tau(\vph)\in \Equ\Zfi$ for all $\vph\in \Phi$. However, we will often use the visual notation $\tuple{\vec\tau(\vph): \vph\in\Phi}$ instead of $\vec\tau$. 
With the intention of presenting the elements of a four-element generating set, we define $\valpha,\vbeta,\vgamma,\vdelta\in L$ by defining their components in the direct factors in the following natural way:
\begin{equation}
\begin{aligned}
\valpha&=\tuple{\stzga:\vph\in\Phi}, \quad&
\vbeta&=\tuple{\stzgb:\vph\in\Phi},\cr
\vgamma&=\tuple{\stzgc:\vph\in\Phi}, \quad&
\vdelta&=\tuple{\stzgd:\vph\in\Phi}.
\end{aligned}
\label{eqnkvVkrmKmpkWzslnk}
\end{equation}
We claim that 
\begin{equation}\left.
\parbox{8.9cm}{for any $\vph,\pvph\in\Phi$ and $0\leq i\leq m_\pvph+1$, the term $\stpzf_i$ gets through $\Zfi$ if and only if either $\vph=\pvph$ and $i=m_\vph+1$, or   there is a (unique) $j\in\set{1,\dots, d+2}$ such that $\vph\in \Psi_j$, $\pvph\in\Gamma_j$, $i=m_\vph+1=m_\pvph+1$, and $\vec z_\pvph\leq \vec z_\vph$.}
\,\,\,\right\}
\label{eqpbxZhdhBvTdnbf}
\end{equation}
Remember from  \eqref{eqtxthnGtvGcslfrlTdLmb} that 
 $0\leq i\leq m_\pvph+1$ means that $\stpzf_i$ is defined.
The ``if part'' of \eqref{eqpbxZhdhBvTdnbf} follows from \eqref{eqpbxmCHglsbrgSk}. In order to show the ``only if part'', assume that  $\stpzf_i$ gets through $\Zfi$. It follows from \eqref{eqtxtZghthnGwpndJCn} that  $i \geq m_\vph+1$. Since $m_\pvph+1\geq i$, we obtain that $m_\pvph\geq  m_\vph$. 

As the next step towards \eqref{eqpbxZhdhBvTdnbf}, we are going to show that 
\begin{equation}\left.
\parbox{7.5cm}{if $\vph,\pvph\in\Phi$, $0\leq i\leq m_\pvph+1$, and $m_\pvph >  m_\vph$, then $\stpzf_i$ does not get through $\Zfi$.}
\,\,\,\right\}
\label{eqpbwprnPfrLndsStmLCr}
\end{equation}
%
Since $m_\pvph >  m_\vph$, it follows from the first sentence of part (B) of Theorem~\ref{thmmain} (which is only an
 assumption here but not the statement we are proving) and \eqref{eqztspRstlFtdPlTcrp}--\eqref{eqhpfpflvdjknenvKnwwm} that $\vph\in \Phi_j$ and $\pvph\in \Phi_{j'}$ with $1\leq j<j'\leq d+2$. First, if $j=1$, then 
the first bit of $\vec z_\vph$ and that of $\vec z_\pvph$  are 0 and 1, respectively, by \eqref{eqZhgBhvgRtkRpRa}--\eqref{eqztspRstlFtdPlTcrp}.
Hence, \eqref{eqpbxBsmsgzsghWq} yields that  $\stpzf_i$ does not get through $\Zfi$, as required. 

Second, assume that $2=j<j'\leq d+2$. We know from  \eqref{eqZrmhcvTPrpLrb} and \eqref{eqZhgBhvgRtkRpRb}--\eqref{eqhpfpflvdjknenvKnwwm} that  $\vec z$ is of the form
$\vec z=1\lev 0^d \lev \vec x$ with $\vec x\in\DBV{m_2-d}{m_2-d-1}$; see \eqref{eqZrmhcvTPrpLrb}. Since $m_1$ and $m_2$ are \emph{odd} numbers and $d\leq m_1<m_2$, we obtain that both parameters of $\DBV{m_2-d}{m_2-d-1}$ are positive; this is necessary to make the definition of this set in 
 \eqref{eqpbxZFUdmszmznflkKbL} meaningful.  (Note that since $m_2-d>m_2-d-1$, the first parameter tailors no restriction on the airiness.) Hence, $\vdim{\vec x}>0$ and the first bit of $\vec x$ is 1. 
Since $2<j'$,  \eqref{eqZrmhcvTPrpLrc} and \eqref{eqZhgBhvgRtkRpRb}--\eqref{eqZhgBhvgRtkRpRc} tell us that $\pvec z$ is of the form  $\pvec z=1^{j'-1}\lev 0^{d+2-j'}\pvec x$ 
with  $\pvec x\in\DBV{d+3-j'}{m_{j'}-d-1}$. Since $j'\leq d+2$ and $m_{j'}-d-1 >m_2-d-1>0$, the parameters of $\DBV{d+3-j'}{m_{j'}-d-1}$ are positive.
Hence,  \eqref{eqpbxZFUdmszmznflkKbL} yields that $\air{\pvec x}< d+3-j'\leq d$ and that the first bit of $\pvec x$ is 1. Since the zeros in $\pvec z$ are separated by the first bit of $\pvec x$, we obtain that 
\begin{equation}
\begin{aligned}
\air{\pvec z}&=\max\Bigl(\air{1^{j'-1}\lev 0^{d+2-j'}}, \air{\vec x}  \Bigr)\cr
&\leq \max(d+2-j', d-1)\leq d-1.
\end{aligned}
\label{eqczhGnrTshmnFgnhW}
\end{equation}
Now, we are in the position to conclude from  $\vec z=1\lev 0^d \lev \vec x$, \eqref{eqpbxBlRSblJsNg}, and  \eqref{eqczhGnrTshmnFgnhW} that   $\stpzf_i$ does not get through $\Zfi$, as required.

Third, we assume that $3\leq j<j'\leq d+2$. Using \eqref{eqZrmhcvTPrpLrc} and \eqref{eqZhgBhvgRtkRpRb}--\eqref{eqZhgBhvgRtkRpRc} as above, we obtain that 
$\vec z$ and  $\pvec z$ are of the forms  
$\vec z=1^{j-1}\lev 0^{d+2-j}\vec x$ 
with  $\vec x$ in $\DBV{d+3-j}{m_{j}-d-1}$ and 
$\pvec z=1^{j'-1}\lev 0^{d+2-j'}\pvec x$ 
with  $\pvec x$ belonging to $\DBV{d+3-j'}{m_{j'}-d-1}$. 
As earlier, the parameters of these two sets are positive. 
Using \eqref{eqpbxZFUdmszmznflkKbL}, we obtain  that  $\air{\pvec x}< d+3-j'$ and the first bit of $\pvec x$ is 1. This bit serves as a separator again and, similarly to \eqref{eqczhGnrTshmnFgnhW},  we obtain that $\air{\pvec z}\leq d+2-j'$. But $d+2-j>d+2-j'\geq 0$, whereby 
 $\vec z=1^{j-1}\lev 0^{d+2-j}\vec x$,   $\air{\pvec z}\leq d+2-j'< d+2-j$ and \eqref{eqpbxBlRSblJsNg} imply that $\stpzf_i$ does not get through $\Zfi$, as required.  I this way, \eqref{eqpbwprnPfrLndsStmLCr} has been proved.

Armed with  \eqref{eqpbwprnPfrLndsStmLCr}, we resume the argument for the ``only if part'' of \eqref{eqpbxZhdhBvTdnbf}. So $\stpzf_i$ is assumed to get through $\Zfi$. 
If $\vph=\pvph$, then it follows from  $i\leq m_\pvph+1$ and   \eqref{eqtxtZghthnGwpndJCn} that $i = m_\vph+1$, as required. Hence, we can assume that $\vph\neq\pvph$. 
We showed right before  \eqref{eqpbwprnPfrLndsStmLCr} that $m_\pvph\geq m_\vph$. This inequality and 
 \eqref{eqpbwprnPfrLndsStmLCr} yield that  $m_\pvph = m_\vph$.  
Since $i\leq m_\pvph +1= m_\vph+1$ by \eqref{eqtxthnGtvGcslfrlTdLmb}, we obtain from \eqref{eqtxtZghthnGwpndJCn} that 
$i= m_\pvph +1= m_\vph+1$, as required. So it is
$\stpzf_{m+1}$ what gets through $\Zfi$, whereby \eqref{eqpbxmCHglsbrgSk} implies that 
\begin{equation}
\vec z_\pvph\leq \vec z_\vph.
\label{eqhTmfkPsTmFJcpKhWn}
\end{equation}
Also,  $m_\pvph = m_\vph$ implies that there is a unique $j\in\set{1,\dots, d+2}$ such that both $\vph$ and $\pvph$ belong to $\Phi_j$. Using that  $X_j$ and $Y_j$ in \eqref{eqZrmhcvTPrpLra} are antichains, it follows from $\vph\neq\pvph$, \eqref{eqZrmhcvTPrpLra}, \eqref{eqztspRstlFtdPlTcrp},  \eqref{eqhpfpflvdjknenvKnwwm}, and the inequality 
 $\vec z_\pvph\leq \vec z_\vph$  that $\vph\in \Psi_j$ and $\pvph\in\Gamma_j$. This completes the proof of \eqref{eqpbxZhdhBvTdnbf}. 

Next, for later use, we show that whenever $x$, $y$, $z$, and $w$ are elements of a set $A$ such that  $x\neq y$ and $z\neq w$, then
\begin{equation}
 \equ x y\cdot(\equ x z+\equ w y)\cdot(\equ x w+\equ z y)=\enul
\label{equchnGyksGncPtlWmcs}
\end{equation}
holds in $\Equ A$. Since $x$ and $y$ play a symmetric role and so do $z$ and $w$, there are only three cases to deal with: $\set{x,y,z,w}|=4$, $\set{x,y,z,w}|=3$ with $y=z$, and $\set{x,y,z,w}|=2$ with $\pair x y=\pair z w$. In each of these three cases,
\eqref{equchnGyksGncPtlWmcs} trivially holds. This shows the validity of \eqref{equchnGyksGncPtlWmcs}. 

Next, with reference to \eqref{eqMssnzLlBkbsZwhgmLrRF}, we define the following auxiliary term.
\begin{equation}
\stzdotsg(\obmu):=\stzeuv {a_s}{a_{t+1}}(\obmu)\cdot(\stzeuv {a_s}{c}(\obmu)+\stzeuv {a_{t+1}}{c}(\obmu)),\text{ where }s = s_\vph\text{ and }t = t_\vph.
\label{eqkplPndjTlrsDbkt}
\end{equation}
It follows from \eqref{eqzkfnbbBNhJdMr} that, 
regardless whether $s=s_\vph$ is less than $t=t_\vph$ or not, 
\begin{equation}
\stzdotsg(\stzbmu)=\equ{\stza_s}{\stza_{t+1}} \text{ holds in }
\Equ\Zfi.
\label{eqkpTmmSfrndptKG}
\end{equation}
With $s:=s_\vph$, $t:=t_\vph$, and $k:=k_\vph$, we define a term $\stzkg$ as follows. Let
\begin{equation}
\begin{aligned}
\stzkg(\obmu):= 
\stzf_{m+1}(\obmu)\cdot \rftrm 0(\obmu) &\cdot (\stzeuv {a_{k-1}}{a_s}(\obmu)+\stzdotsg(\obmu)+ 
\stzeuv {a_{t+1}}{a_k}(\obmu))    \cr
&\cdot   (\stzeuv {a_{k-1}}{a_{t+1}}(\obmu)+\stzdotsg(\obmu)+\stzeuv {a_{s}}{a_k}(\obmu)).
\end{aligned}
\label{eqpHltstszb}
\end{equation}
Note that for any other id-quadruple $\pvph$, the terms  $\stpzdotsg(\obmu)$ and $\stpzkg(\obmu)$ are also defined by 
\eqref{eqkplPndjTlrsDbkt} and \eqref{eqpHltstszb}
 but then, of course, $s$, $t$, and $m$ in \eqref{eqkplPndjTlrsDbkt} and  \eqref{eqpHltstszb} stand for $s_\pvph$, $t_\pvph$, and $m_\pvph$, respectively. We claim that for any $\vph,\pvph\in \Phi$,
\begin{equation}
\stpzkg(\stzbmu)=\begin{cases}
\kequ{\stzI_{m_\vph+1}},&\text{ if }\pvph=\vph;\cr
\enul,&\text{ if }\pvph \neq \vph.
\end{cases}
\label{eqzhGszpfCsLjpHlG}
\end{equation}
In order to prove \eqref{eqzhGszpfCsLjpHlG}, first we assume  that $\pvph=\vph$. 
It follows from 
\eqref{eqmmMcnVgSkrgBrsbkMS} and  \eqref{eqinTrwkhFt} 
that the meet of the first two meetands of 
\eqref{eqpHltstszb} becomes $\kequ{\stzI_{m_\vph+1}}$ after substituting $\stzbmu$ for $\obmu$, that is, 
$\stzf_{m_\vph+1}(\stzbmu)\cdot \rftrm 0(\stzbmu)= \kequ{\stzI_{m_\vph+1}} =\equ{\stza_{k_\vph-1}}{\stza_{k_\vph}}$. By \eqref{eqzkfnbbBNhJdMr} and \eqref{eqkpTmmSfrndptKG},   the same substitution turns the third meetand, which is a three-fold join, into 
$\equ {\stza_{k_\vph-1}}{\stza_{s_\vph}}  + \equ {\stza_{s_\vph}}{\stza_{t_\vph+1}}  + 
\equ {a_{t_\vph+1}}{a_{k_\vph}}$, which is greater than 
$\equ{\stza_{k_\vph-1}}{\stza_{k_\vph}}=\kequ{\stzI_{m_\vph+1}}$. 
We obtain similarly that the fourth meetand becomes greater than 
$\kequ{\stzI_{m_\vph+1}}$ after substituting $\stzbmu$ for $\obmu$. Hence, $\stpzkg(\stzbmu)=\kequ{\stzI_{m_\vph+1}}$ if $\pvph=\vph$, as required.

Second, assume that  $\pvph\neq\vph$. We can assume that there is a (unique) $j\in\set{1,\dots, d+2}$ such that 
\begin{equation}
\vph\in \Psi_j, \,\, \pvph\in\Gamma_j,\,\,  m_\vph+1=m_\pvph+1=m_j+1,\,\,\text{and}\,\,\vec z_\pvph\leq \vec z_\vph,
\end{equation}
since otherwise   \eqref{eqinTrwkhFt},  
 \eqref{eqpbxZhdhBvTdnbf}, and the first two meetands in \eqref{eqpHltstszb} immediately imply that 
$\stpzkg(\stzbmu)=\enul$ holds in $\Equ\Zfi$. Since  $\vph\in \Psi_j$ and $\pvph\in \Gamma_j$, we have  that 
the necktie of $\Zfi$ is trivial but $s':=s_\pvph<t_\pvph <t_\pvph+1=:t'+1$. Hence, it follows from the construction of $\Zfi$ and  $(t'+1)-s'\geq 2$ that 
\begin{equation}
\stzgb\cdot(\stzgc + \equ{\stza_{s'}}{\stza_{t'+1}})=\enul\,\,
\text{ and }\,\stzgc\cdot(\stzgb + \equ{\stza_{s'}}{\stza_{t'+1}})=\enul.
\label{eqzHfGskgHzwPWRt}
\end{equation}
Combining \eqref{eqszGnslmsLmjTzZbh} and \eqref{eqzHfGskgHzwPWRt},
we obtain that
\begin{equation*}
\stzgb\cdot(\stzgc + \stpzeuv{a_{s'}}{a_{t'+1}}(\stzbmu))=\enul\,\,
\text{ and }\,\stzgc\cdot(\stzgb + \stpzeuv{a_{s'}}{a_{t'+1}}(\stzbmu))=\enul.
\end{equation*}
After comparing these two equalities with \eqref{eqzghbjRwJF} (applied to $\pvph$ rather than to $\vph$), 
\begin{equation}
\stpzeuv{a_{s'}}{c}(\stzbmu)=\enul\,\,\text{ and }\,\,\stpzeuv{a_{t'+1}}{c}(\stzbmu)=\enul. 
\label{eqczshrPbTNXnHqxj}
\end{equation}
With $m:=m_\vph=m_\pvph$ and $k:=k_\vph=k_\pvph$,
we have that
\begin{equation}
\stpzf_{m+1}(\stzbmu)\cdot \rftrm 0(\stzbmu)\leq  \rftrm 0(\stzbmu)
\overset{\eqref{eqinTrwkhFt}}\leq \kequ{\stzI_{m+1}}=\equ{\stza_{k-1}}{\stza_k}.
\label{eqdzhGhsKtkmRdKQRvLn}
\end{equation}
The equalities in \eqref{eqczshrPbTNXnHqxj} show that both joinands in \eqref{eqkplPndjTlrsDbkt}, applied to $\pvph$ rather than to $\vph$, 
turn to $\enul$ if $\stzbmu$ is substituted for $\obmu$. Hence,   $\stpzdotsg(\stzbmu)=\enul$ in $\Equ\Zfi$. Thus, the middle joinands disappear in \eqref{eqpHltstszb}, applied to $\pvph$ rather than to $\vph$, after letting $\obmu:=\stzbmu$.
This fact together with \eqref{eqdzhGhsKtkmRdKQRvLn} show that,
with the notation $s':=s_\pvph$ and $t':=t_\pvph$,
\begin{align*}
\stpzkg(\stzbmu)\leq 
\equ{\stza_{k-1}}{\stza_k}\cdot \bigl(\stpzeuv {a_{k-1}}{a_{s'}}(\stzbmu)+\stpzeuv {a_{t'+1}}{a_k}(\stzbmu)\bigr)  \cr  
\cdot   \bigl(\stpzeuv {a_{k-1}}{a_{t'+1}}(\stzbmu)+\stpzeuv {a_{s'}}{a_k}(\stzbmu)\bigr)\cr
\leqref{eqszGnslmsLmjTzZbh} \equ{\stza_{k-1}}{\stza_k}\cdot
\bigl(\equ{\stza_{k-1}}{\stza_{s'}} + \equ{\stza_{t'+1}}{\stza_{k}}    \bigr) \cr 
\cdot 
\bigl(\equ{\stza_{k-1}}{\stza_{t'+1}} + \equ{\stza_{s'}}{\stza_{k}}    \bigr) \eeqref{equchnGyksGncPtlWmcs} \enul.
\end{align*}
This completes the proof of \eqref{eqzhGszpfCsLjpHlG}.

We still have to define some terms. For $\pvph\in \Phi$, let
\allowdisplaybreaks{
\begin{align}
&\stpzbg(\obmu):= \bigl(\stpzkg(\obmu)+\oga\ode\bigr)\cdot \stpzogb(\obmu),\,\,\text{ see \eqref{eqmCsltcGrna} and \eqref{eqpHltstszb}},
\label{eqmTrkfjzSgkhLtksRlca}\\
&\stpzogtwb(\obmu):=  \stpzogb(\obmu)\cdot\bigl({}\stpzbg(\obmu) + \oal  \bigr),\label{eqmTrkfjzSgkhLtksRlcb}\\
&\stpzogtwc(\obmu):=  \stpzogc(\obmu)\cdot\bigl({}\stpzbg(\obmu) + \oal  \bigr),\label{eqmTrkfjzSgkhLtksRlcc}\\
&\stpzogtwa(\obmu):= \oal\cdot\bigl(\, \stpzogtwb(\obmu) + \stpzogtwc(\obmu)   \bigr),\label{eqmTrkfjzSgkhLtksRlcd}\\
&\stpzogthb(\obmu):=\obe\cdot\bigl(\, \stpzogtwa(\obmu) + \oga  \bigr), \label{eqmTrkfjzSgkhLtksRlcf}\\
&\stpzogthc(\obmu):=\oga\cdot\bigl(\, \stpzogtwa(\obmu) + \obe  \bigr),\,\,\text{ and}\label{eqmTrkfjzSgkhLtksRlcg}\\
&\stpzogthd(\obmu):=\ode\cdot\bigl(\, \stpzogthb(\obmu) + \stpzogthc(\obmu)  \bigr).\label{eqmTrkfjzSgkhLtksRlck}
\end{align}}%
Our goal with \eqref{eqmTrkfjzSgkhLtksRlca}--\eqref{eqmTrkfjzSgkhLtksRlck} is revealed by the following equalities; each of them will follow from earlier equalities out of \eqref{eqmTrkfjzSgkhLtksRlca}--\eqref{eqmTrkfjzSgkhLtksRlck}, from the structure of $\Zfi$, and from some other equalities that will be indicated. To prepare some references to \eqref{eqmgszRtmTBvsGa} and \eqref{eqmgszRtmTBvsGb}, note that the terms occurring in them do not depend on $\pvph$; see \eqref{eqmCsltcGrna}.
For any two $\vph,\pvph\in \Phi$, we have that 
\allowdisplaybreaks{
\begin{align}
&\stpzbg(\stzbmu)=\begin{cases}
\equ{\stza_{k_\vph-1}}{\stzb_{k_\vph-1}},&\text{ if }\pvph=\vph,\cr
\enul,&\text{ if }\pvph\neq \vph ,
\end{cases}\,\text{ by  \eqref{eqmgszRtmTBvsGa} and \eqref{eqzhGszpfCsLjpHlG};}
\label{alignSmpgtKPsRha}\\
&\stpzogtwb(\stzbmu)=\begin{cases}
 \stzogb(\stzbmu),&\text{ if }\pvph=\vph,\cr
\enul,&\text{ if }\pvph\neq \vph ,
\end{cases}\,\,\text{ by \eqref{eqmgszRtmTBvsGa} and \eqref{alignSmpgtKPsRha};}
\label{alignSmpgtKPsRhb}\\
&\stpzogtwc(\stzbmu)=\begin{cases}
 \stzogc(\stzbmu),&\text{ if }\pvph=\vph,\cr
\enul,&\text{ if }\pvph\neq \vph ,
\end{cases}\,\,\text{ by \eqref{eqmgszRtmTBvsGb} and \eqref{alignSmpgtKPsRha};}
\label{alignSmpgtKPsRhc}\\
&\stpzogtwa(\stzbmu)=\begin{cases}
 \stzga,&\text{ if }\pvph=\vph,\cr
\enul,&\text{ if }\pvph\neq \vph ,
\end{cases}\,\,\text{ by \eqref{eqmgszRtmTBvsGa}--\eqref{eqmgszRtmTBvsGb} and   \eqref{alignSmpgtKPsRhb}--\eqref{alignSmpgtKPsRhc};}
\label{alignSmpgtKPsRhd}\\
&\stpzogthb(\stzbmu)=\begin{cases}
 \stzgb,&\text{ if }\pvph=\vph,\cr
\enul,&\text{ if }\pvph\neq \vph ,
\end{cases}\,\,\text{ by \eqref{alignSmpgtKPsRhd};}
\label{alignSmpgtKPsRhe}\\
&\stpzogthc(\stzbmu)=\begin{cases}
 \stzgc,&\text{ if }\pvph=\vph,\cr
\enul,&\text{ if }\pvph\neq \vph ,
\end{cases}\,\,\text{ by \eqref{alignSmpgtKPsRhd};}
\label{alignSmpgtKPsRhf}\\
&\stpzogthd(\stzbmu)=\begin{cases}
 \stzgd,&\text{ if }\pvph=\vph,\cr
\enul,&\text{ if }\pvph\neq \vph 
\end{cases}\,\,\text{ by \eqref{alignSmpgtKPsRhe} and \eqref{alignSmpgtKPsRhf}.}
\label{alignSmpgtKPsRhg}
\end{align}}%
Finally, if we apply the quaternary term $\stpzogtwa(\stzbmu)$ from \eqref{eqmTrkfjzSgkhLtksRlcd} to the quadruple $\tuple{\valpha,\vbeta,\vgamma,\vdelta}$ defined in \eqref{eqnkvVkrmKmpkWzslnk}, then we do this componentwise. Hence,
it follows from \eqref{alignSmpgtKPsRhd} that 
\begin{equation}
\stpzogtwa(\valpha,\vbeta,\vgamma,\vdelta)=
\tuple{\enul,\dots,\enul, \stpzga ,\enul,\dots,\enul},
\label{eqmRmVsSzsTtfNxkdKna}
\end{equation}
where $\stpzga\in \Zpfi$ is in the $\pvph$-th component of $L$ defined  in \eqref{eqmslgzRnprmrFzrmhSdmSmdR}. Using \eqref{alignSmpgtKPsRhe}--\eqref{alignSmpgtKPsRhg}, we obtain similarly that 
\begin{align}
\stpzogthb(\valpha,\vbeta,\vgamma,\vdelta)=
\tuple{\enul,\dots,\enul, \stpzgb ,\enul,\dots,\enul}
\label{eqmRmVsSzsTtfNxkdKnb}\\
\stpzogthc(\valpha,\vbeta,\vgamma,\vdelta)=
\tuple{\enul,\dots,\enul, \stpzgc ,\enul,\dots,\enul}
\label{eqmRmVsSzsTtfNxkdKnc}\\
\stpzogthd(\valpha,\vbeta,\vgamma,\vdelta)=
\tuple{\enul,\dots,\enul, \stpzgd ,\enul,\dots,\enul}.
\label{eqmRmVsSzsTtfNxkdKnd}
\end{align}
Let $S:=\sublat{\valpha,\vbeta,\vgamma,\vdelta}$, the sublattice of $L$ generated by $\set{\valpha,\vbeta,\vgamma,\vdelta}$. The elements of $L$ described in \eqref{eqmRmVsSzsTtfNxkdKna}-- \eqref{eqmRmVsSzsTtfNxkdKnd} belong to $S$ since they are obtained from the generators of $S$ with the help of lattice terms. It follows from 
Lemma~\ref{lemmaZoddxzG} that 
\begin{equation}
\text{for every }\stpzge\in \Equ\Zpfi,\quad
\tuple{\enul,\dots,\enul, \stpzge ,\enul,\dots,\enul}\in S,
\label{eqmkSrLnSsrKdWrcPctzl}
\end{equation}
where $\stpzge$ is the $\pvph$-th component. Since each element of $L$ is the join of elements of form \eqref{eqmkSrLnSsrKdWrcPctzl}, $L=S$. Hence, $L$ is four-generated, 
and  part (B) of the theorem has been proved.

Next, we turn our attention to part (A). There are four cases to deal with.

\begin{case}\label{caseone} We assume that 
 $7\leq n=n'-1$ and $n$ is odd. Let $d:=\len n$; see \eqref{eqkrSnmkGtmKlPtrZg}, and let $m_1:=d$. By  \eqref{eqkrSnmkGtmKlPtrZg}, $m_1=\len {n'}$, $n=m_1+4$; and $n'=m_1+5$; compare the last two equalities to \eqref{eqhCshbfhRvhswkm}. Since $n\geq 7$, we have that $d:=m_1\geq 3$. Hence, $\pair 1 1\in \tra d{d-1} = \tra{d}{m_1-1}$ either by \eqref{eqcztsmsTrhltmRb} and \eqref{eqpbxmNkSgtSnkMhG}, if $d>3$, or by   \eqref{eqcskknwgmLmnxFnT}, if $d=3$. 
Thus, we can let $p_1=q_1=1$ and, for $i\in\set{2,\dots, d+2}$, $p_i=q_i=0$ according to  \eqref{eqpbxmNkSgtSnkMhG},  \eqref{eqthmmaina}, \eqref{eqthmmainb}, and \eqref{eqthmmaind}. 
By Remark~\ref{remarkcZgBnjT}, we can disregard the stipulation $p_i+q_i>0$. Similarly, instead of going after \eqref{eqlGnwsqjxTr},
Remark~\ref{remarkcZgBnjT} allows us to take $w_1:=1$. Then the lattice given in \eqref{eqdzGrlsTmTnhzk} is clearly (isomorphic to) $\Part n\times \Part {n'}$, whereby this direct product is four-generated by part (B) of the theorem, as required.  
\end{case}

\begin{case}\label{casetwo}  We assume that either
 $7\leq n<n'-1$ and $n$ is odd, or $7<n<n'$ and $n$ is even.
By  \eqref{eqkrSnmkGtmKlPtrZg}, $m_1:=\len n$ is smaller than $m_2:=\len{n'}$ and $d:=m_1$ is at least 3. As in Case~\ref{caseone}, $\pair 1 1$ belongs to $\tra{d}{m_1-1}$. By   \eqref{eqpbxmNkSgtSnkMhG}, so do $\pair 1 0$ and $\pair 0 1$.  So we can let $\pair{p_1}{q_1}:=\pair 1 0$ if $n$ is odd, and we can  let  $\pair{p_1}{q_1}:=\pair 01$ if $n$ is even; 
this choice complies with \eqref{eqthmmaina}. Similarly, let $\pair{p_2}{q_2}:=\pair 1 0$ if $n'$ is odd and 
 $\pair{p_2}{q_2}:=\pair 01$ if $n'$ is even. 
Since $m_2>m_1=d$ and both numbers are odd, $m_2-d-1$ from \eqref{eqthmmainb} is positive. Hence, it follows from 
\eqref{eqhmghZtkkrLnspSg} and \eqref{eqnHspjzlkHMkLbR} that the choice of  $\pair{p_2}{q_2}$ complies with \eqref{eqthmmainb}. 
 Choosing the rest of $p_i$'s and $q_i$'s to be 0 and all the $w_i$'s to be 1, as  in Case~\ref{caseone}, 
 the lattice given in \eqref{eqdzGrlsTmTnhzk} turns into $\Part n\times \Part {n'}$, and part (B) completes this case as previously. 
\end{case}

Since all possibilities with $7\leq n$ have been settled, 
the rest of the cases will assume that  $n\in\set{5,6}$. 

\begin{case} We assume that $n\in\set{5,6}$ and $n'\in\set{5,6}$. This means that $n=5$ and $n'=6$, because $n<n'$. 
While dealing with this case, $\vec 0$ and $\vec 1$ will denote the two 1-dimensional bit vectors. Let $\vph=\tuple{1,1,1,\vec 1}$ and $\pvph=\tuple{1,0,1,\vec 0}$. Since $|\Zfi|=n_\vph=5$ and $|\Zpfi|=n_\pvph=6$, it suffices to show that $\Equ\Zfi\times\Equ\Zpfi$ is four-generated. Using \eqref{eqpbxmCHglsbrgSk} and \eqref{eqpbxhmTnmcslFzVpjn}, a simplified version of the technique used for part (B) yields in a straightforward way that $\Equ\Zfi\times\Equ\Zpfi$ is indeed four-generated; the tedious details are omitted.
\end{case}

\begin{case} We assume that $n\in\set{5,6}$ but $n'\notin\set{5,6}$. Then $m:=\len n=1$,  $n'\geq 7$, and  $m':=\len{n'}\geq 3$. It is important that $m<m'$.  We let $\vph=\tuple{1,s,t,\vec 0}$ where $\vec 0$ is 1-dimensional, 
$\pair s t=\pair 1 1$ if $n=5$, and  $\pair s t=\pair 0 1$ if $n=6$. With the $m'$-dimensional ``unit vector'' $\vec 1=1^{m'}=\tuple{1,\dots,1}$, we let $\pvph=\tuple{1,1,1,\vec 1}$ if $n'$ is odd while we let  $\pvph=\tuple{1,0,1,\vec 1}$ if $n'$ is even. Based on \eqref{eqtxtZghthnGwpndJCn}, $m<m'$,  
 and   \eqref{eqpbxBsmsgzsghWq},
a simplified version of the technique used for part (B) yields that  $\Equ\Zfi\times\Equ\Zpfi = \Equ n\times \Equ{n'}$ is four-generated; the straightforward tedious details are omitted again. 
This completes the proof of Theorem~\ref{thmmain}. \qedhere
\end{case}
\end{proof}

\begin{proof}[Proof of Corollary~\ref{coroltconsecutive}]
Keeping the notation of Theorem~\ref{thmmain} in effect, assume that $d\geq 3$. For $i\in\set{1,2,\dots, d+1}$, we let 
$m_i=d+2i$.  (Note that $m_{d+2}$ is not used here.)
Since $\tra d{m_1-1}=\tra d{d+1}$ contains $\pair 1 1$ by 
\eqref{eqcztsmsTrhltmRb} and \eqref{eqpbxmNkSgtSnkMhG},
\eqref{eqthmmaina} allows us to chose $\pair{p_1}{q_1}=\pair 1 1$. Similarly, $\pair 1 1\in \tra{m_2-d}{m_2-d-1}=\tra 4 3$ by \eqref{eqcztsmsTrhltmRb} and \eqref{eqpbxmNkSgtSnkMhG}, so \eqref{eqthmmainb} allows us to let $\pair{p_2}{q_2}:=\pair 1 1$.  
For $i\in\set{3,4,\dots, d+1}$,
$\tra{d+3-i}{m_i-d-1}=\tra{d+3-i}{2i-1}$, which contains $\pair 1 1$ since $d+3-i\geq 2$ and we can apply  \eqref{eqcztsmsTrhltmRb} and  \eqref{eqpbxmNkSgtSnkMhG}. Hence, \eqref{eqthmmaind} complies with $\pair{p_i}{q_i}=\pair 1 1$.  So if we reduce $w_i$ down to 1, which is possible by Remark~\ref{remarkcZgBnjT}, then 
\begin{equation}\left.
\parbox{9cm}{$\Part{d+6} \times \Part{d+7} \times \Part{d+8} \times
\dots\times \Part{3d+7}$ is four-generated}
\,\,\,\right\}
\label{eqcmzGmhKmpgnccwfLs}
\end{equation} 
by Theorem~\ref{thmmain} since
\begin{equation}
\parbox{10cm}{$n_1'=m_1+4=d+6$, $n_1''=m_1+5=d+7$, $n_2'=m_2+4=d+2\cdot 2 +4=d+8$, \dots, $n_{d+1}''=m_{d+1}+5=d+2(d+1)+5=3d+7$.}
\label{eqpbxmtlKhvmhswbsszzrsk}
\end{equation}
Now there are two cases. First, if $n\geq 9$ is odd, then we can let $n_1':=n$. This defines $d$ as $n-6$ by \eqref{eqpbxmtlKhvmhswbsszzrsk}, and $3d+7$ equals $3(n-6)+7=3n-11\geq 3n-14$. Second, if $n\geq 9$ is even, then we  let $n_1'':=n$. Then $d=n-7$ by \eqref{eqpbxmtlKhvmhswbsszzrsk}, which leads to 
$3d+7=3n-14$.  In both cases, the
 corollary follows  from 
\eqref{eqcmzGmhKmpgnccwfLs}  and Remark~\ref{remarkcZgBnjT}.
\end{proof}

\begin{proof}[Proof of Corollary~\ref{corolmanyfactors}]
We can assume that $u$ is a (large) even number, so $u=2v$ for, say, $9\leq v\in\NN$. In harmony with the notation used in Theorem~\ref{thmmain}, let $d:=4v+1$ and, for $i\in\set{1,2,\dots,v}$,
let $m_i:=8v+2i-1$. Similarly to the previous proof, $m_i$ for $i$ outside the chosen index set will be disregarded and $w_i$ from 
\eqref{eqlGnwsqjxTr} will be replaced by 1. 
For $i\in\set{3,4,\dots,v}$, 
\begin{align*}
&\sba d{m_1-1}=\sba{4v+1}{8v}\overset{\eqref{eqSprNrskzPnnmDlrCjsX}}{\geq}{{8v-1}\choose{4v}}\geq 8v-1 \geq 4v,\cr
&\sba {m_2-d}{m_2-d-1}=\sba {4v+2}{4v+1} \overset{\eqref{eqSprNrskzPnnmDlrCjsX}}{\geq}  {{4v}\choose{2v}}\geq  4v,\cr
&\sba {d+3-i}{m_i-d-1}=\sba {4v+4-i}{4v+2i-3}\cr
&\kern 3.5cm \overset{\eqref{eqSprNrskzPnnmDlrCjsX}}{\geq} 
{{4v+2i-4}\choose{2v+i-2}}\geq  
4v+2i-4 \geq    4v.
\end{align*}
Hence, \eqref{eqthmmaina}, \eqref{eqthmmainb}, and \eqref{eqthmmainc} together with Remark~\ref{remarkcZgBnjT} allow that
$p_1=q_1=p_2=\dots=q_v :=2v=u$.  Then $n_1', n_1'', n_2', n_2'',\dots, n_v', n_v''$ are $u=2v$ consecutive numbers and the corollary follows from Theorem~\ref{thmmain}.
\end{proof}

\section{Facts and  statistical analysis achieved with the help computer programs}\label{sectstatcomp}

\subsection{Estimating confidence intervals}\label{subsectconfint}
In this section, we are going to use some well-known facts of statistics; see, for example,  Hodges and Lehmann~\cite[page 255]{hodgeslehmann}, 
Lefebvre~\cite[Chapter 6.2]{lefebvre}, and mainly  Mendenhall, Beaver and Beaver~\cite[around page 60]{mendenhallatal}. For a beginner in statistics, like the first author, it is not so easy to extract the necessary tools from the literature with little effort. Hence, 
we recall and summarize what we need from statistics.
Note that an event of a binomial model corresponds to a random variable with two possible values, 0 and 1; this allows us to simplify what follows below.  
Note also that for a large $k$, $\sqrt{k/(k-1)}$ is very close to 1; for example, it is $1.000020000$ (up to nine digits) for $k=25000$. Hence, if we replaced $k-1$ by $k$ in \eqref{subsectconfint} as some sources of information do, then the error would be neglectable (and smaller than what rounding can cause).

Assume that an experiment has only two possible outcomes: ``success'' with probability $p$ and ``failure'' with probability $q:=1-p$ but none of $p$ and $q$ is known. In order to obtain some information on $p$, we take a random \emph{sample}, that is, we repeat the experiment $k$ times independently. 
Let $s$ denote the number of those  experiments that ended up with ``success''. Then, of course, 
\begin{equation}
\text{we estimate $p$ by $\ovl p:=s/k$},
\label{eqtxtvRlphszLk}
\end{equation}
but we also would like to know how much we can rely on this estimation. Therefore, 
we let $\ovl q:=1-\ovl p$, pick a ``confidence level'' $\conf\in (0,1)\subset \mathbb R$, we let
\begin{equation}
\ovl\sigma:=\sqrt{\frac {\ovl p \cdot \ovl q}{k-1}},
\end{equation}
and we determine the positive real number $\zconf$ from the equation   
\begin{equation}
\conf = \int_{-\zconf}^{\zconf} \frac 1{\sqrt{2\pi}}\cdot e^{-x^2/2} d x.
\label{eqnwsHjwz8KF}
\end{equation}
Note that the function to be integrated in \eqref{eqnwsHjwz8KF} is the so-called density function of the \emph{standard normal distribution} and the $\zconf$ for many typical values of $\conf$ are given in practically all books on statistics. In this paper, to maintain five-digit accuracy, we
are going to use the values given in Table~\ref{tabledkzdskTG}. Finally, we define the
\begin{equation}
 \text{\emph{confidence interval} $I(\conf)$ to be 
$[\,\ovl p-\zconf\ovl\sigma,\, \ovl p+\zconf\ovl\sigma\,]$.}
\label{eqcTfjWvRhgBnNk}
\end{equation}
Let us emphasize that while $p$ is a concrete real number, the confidence interval is \emph{random}, because it depends on a randomly chosen sample. Even if we take another $k$-element sample with the same $k$ (that is, we repeat the experiments $k$ times again), then (with very high probability in general) we obtain a different confidence interval. We cannot claim that the confidence interval $I(\conf)$ surely contains the unknown probability $p$. However, statistics says that 
\begin{equation}
\text{with probability at least $\conf$, \,\,$I(\conf)$ contains $p$.}
\label{eqtxtPrhBkszWmXx}
\end{equation}

\begin{table}
\[
\vbox{\tabskip=0pt\offinterlineskip
\halign{\strut#&\vrule#\tabskip=4pt plus 2pt&
#\hfill& \vrule\vrule\vrule#&
\hfill#&\vrule#&
\hfill#&\vrule#&
\hfill#&\vrule#&
\hfill#&\vrule\tabskip=0.1pt#&
#\hfill\vrule\vrule\cr
\vonal\vonal\vonal\vonal
&&\hfill$\conf$&&$0.900$&&$0.950$&&$0.990$&&$0.999$&
\cr\vonal\vonal
&&\hfill$\zconf$&&$1.64485$&&$1.95996$&&$2.57583$&&$3.29053$&
\cr\vonal
\vonal\vonal\vonal
}} 
\]
\caption{$\zconf$ for some confidence levels $\conf$; taken from 
\texttt{https://mathworld.wolfram.com/ConfidenceInterval.html}}\label{tabledkzdskTG}
\end{table}

Next, assume that $G$ is a given subset of a large finite set $F$. (For example, and this is what is going to happen soon, $F$ can be the set of all four-element subsets of $\Part n$, for $n\in\set{4,5,\dots,9}$, and $G$ can be $\set{H\in G: \sublat H=\Part n}$.)
We know the size $|F|$ of $F$ but that of $G$ is usually unknown for us. We would like to obtain some information on $|G|$; this task is equivalent to getting information on the portion $p:=|G| / |F|$. With respect to uniform distribution, $p$ is the probability that a randomly selected element of $F$ belongs  $G$. Take a $k$-element sample, that is, select $k$ members of $F$ independently,
and declare ``success'' if a randomly chosen member belongs to $G$. With a fixed confidence level $\conf$, compute the confidence interval $I(\zconf)$ according to \eqref{eqnwsHjwz8KF} and \eqref{eqcTfjWvRhgBnNk}. Then we conclude from \eqref{eqcTfjWvRhgBnNk} and \eqref{eqtxtPrhBkszWmXx} that
\begin{equation}\left.
\parbox{10cm}{with probability at least $\conf$, the $k$-element sample has been chosen so that
$(\ovl p-\zconf\ovl\sigma)\cdot|F|\leq |G|
\leq (\ovl p+\zconf\ovl\sigma)\cdot|F|$.
}\,\,\,\right\}
\label{eqpbxZhGrmWdkRsRh}
\end{equation}

\subsection{Computer programs}
The authors, working with computers independently, have developed two disjoint sets of computer programs; all data to be reported in (this) Section~\ref{sectstatcomp} were achieved by these programs. 
Furthermore, a sufficient amount of these data, including $\gnu 4=50$ and $\gnu 5=5\,305$ from Table~\ref{tableexact}, were achieved independently by both authors, with different programs, different attitudes to computer programming, and different computers. This fact gives us a lot of confidence in our programs and the results obtained by them even if some results that needed too much performance from our computers and programs were achieved only by one of the above-mentioned two settings. 

The first setting includes some programs written in 
Bloodshed \emph{Dev-Pascal} v1.9.2 (Freepascal) under Windows 10 and also in \emph{Maple} V. Release 5 (1997); these programs are available from the website of the first author; see the list of publications there.  The strategy is to represent a partition by a lexicographically ordered list of its blocks separated by zeros. For example, for $n=8$, the partition
$\set{\set{4,6},\set{1,5,3,7}, \set{2,8}}$ is represented by
the vector 
\begin{equation}
\tuple{1,3,5,7,0,2,8,0,4,6,0,-1,-1,-1,-1,-1,-1}
\label{eqnbhtWfkJSvBbK}
\end{equation}
where $-1$ means that there is no more block. This representation is unique, which allows us to order any set of partitions lexicographically. The benefit of this lexicographic ordering is that it takes only $O(t)$ step to decide if a given partition belongs to a $t$-element set of partitions; this task occurs many times when computing sublattices generated by four partitions. The collection of all partitions of $\set{1,2,\dots,n}$ was computed recursively by a Maple program. This collection was saved into a txt file. After inputting this file, the hard job of generating sublattices was done by Pascal programs, which took care of efficiency in some ways.
Since all the partitions were input by these Pascal programs into an array, a random partition was selected by selecting its index as a random number of the given range.
Note that Maple was also used to do some computations, trivial for Maple, to obtain some numbers occurring in this section; see, for example, the numbers in Table~\ref{tablehnyklNc}.

The second set  consist of programs 
 written in \emph{R} (64 bit, version 3.6.3)  and \emph{Python} 3.8 which works well with any operating system.
Implementing the urn model of Stam~\cite{stam} in R, random members of $\Part n$ were selected in a sophisticated way as follows; note that this method does not require that $\Part n$ be stored in the computer. The method consists of two steps. First, choose a positive integer $u$ 
according to the probability distribution  
\begin{equation}
P(u=j)= 
\frac{j^n}{e\cdot j!\cdot \Bell n}, \qquad j\in\NN,
\label{eqdBsnskHGrL}
\end{equation}
where $e$ is the well-known constant $\lim_{j\to\infty} (1+1/j)^j$.
It is pointed out in Stam~\cite{stam} that $\sum_{j=1}^\infty P(u=j)=1$, so \eqref{eqdBsnskHGrL} is 
indeed a probability distribution. Fortunately, the series in \eqref{eqdBsnskHGrL} converges very fast and  $\sum_{j=1}^{2n} P(u=j)$ is very close to 1. Hence, though the program chose $u$ from $\set{1,2,\dots, 2n}$ rather than from $\NN$, the error is neglectable. In the next step, put the numbers  1, 2, \dots, $n$ into $u$ urns at random, according to the uniform distribution on the set of urns. 
Finally, the contents of the nonempty urns constitute a random partition that we were looking for.  
The implementation of partitions involved importing \emph{sympy} (external library with functions for computing \emph{Partitions}) into Python. Additionally, \emph{itertool} which is a �built-in� function in Python was necessary for computing various combinations for the four partitions of which meets and joins were evaluated. The combination of these two functions was crucial for the computation of the sublattices generated by the four partitions.

The lion's share of the computation was done on a desktop computer
with AMD Ryzen 7 2700X Eight-Core Processor 3.70 GHz. With the speed 3.70 GHz, the total amount of ``pure computation time'' was a bit more than two and a half weeks; see Tables~\ref{tableexact} and \ref{tablestat}. By ``pure computation time'' we mean that a single copy of the program was running without being disturbed by other programs and without letting the computer go to an idle state. 
The whole computation took more than a month because of several breaks when the computer was idle or it was turned off or it was used for  other purposes.

\subsection{Data obtained by computer programs}
The results obtained by computers are given in Tables~\ref{tableexact} and \ref{tablestat}. In particular, Table~\ref{tableexact} gives the number $\gnu n$ of four-element generating sets for $n\in\set{4,5,6}$; see also \eqref{pbxgnujl} where the notation $\gnu n$ is defined. Clearly, $\gnu 7$ cannot be determined by our programs and computers, although this task might be possible with thousands or millions of similar computers working jointly for a few years or so. (But this is just a first impression not supported by real analysis.)

Table~\ref{tablestat} shows what we have obtained from random samples. For a given $n$, let 
\begin{equation}
\grho=\grho (n):= \gnu n \cdot {\Bell n\choose 4 }^{-1};
\label{eqzrhNjZjJzs}
\end{equation}
this is the exact theoretical probability that a random four-element subset of $\Part n$ generates $\Part n$. 
In accordance with \eqref{eqtxtvRlphszLk}, $\ogrho=\ogrho (n)$ is $s/k$; the table contains $100\ogrho$ up to five digits.  The least and the largest endpoints of the confidence interval $I(\conf)$ are denoted by ${\conf}_\ast$ and $\conf{}^\ast$, respectively. For example, with this notation,  $[0.999_\ast,\, 0.999^\ast]$ is the confidence interval $I(0.999)$. 

In order to enlighten the meaning of Table~\ref{tablestat} even more, consider, say,  the  entries of its last two 
rows in the column for $n=7$. Taking \eqref{eqpbxZhGrmWdkRsRh} and Table~\ref{tablehnyklNc} also into account, we obtain that 
\begin{align}
&[0.0157753,\,\, 0.0159877]\text{ contains }\grho (7) 
\text{ with probability }0.999\text{ and}
\label{eqtxtdzRmfMhBgrnGr}
\\
&[3.86180\cdot 10^8,\,\, 3.91381 \cdot 10^8]\text{ contains }\gnu 7 
\text{ with probability }0.999.
\label{eqtxlGhermtsb5zNt}
\end{align}
However, let us note the following. From rigorous mathematical point of view, not even $\gnu 7\geq 2$ is proved by  Table~\ref{tablestat}. Indeed, it is theoretically possible (although very unlikely) that all the $238\,223$ generating sets the program found after fifteen million experiments are the same. 
The right interpretation of \eqref{eqtxtdzRmfMhBgrnGr} and \eqref{eqtxlGhermtsb5zNt} is that if very many lattice theorists and statisticians pick  random samples of the same size (that is, they pick fifteen million four-elements subsets of $\Part n$ and count the generating sets among them), then these samples give many different intervals but approximately 99.9 percent of these colleagues find intervals that happen to contain $\grho(7)$ and  $\gnu 7$, respectively. We the authors hope that we belong to these 99.9 percent.

When only Proposition~\ref{proplwStmtsH}, Table~\ref{tableexact} and so $\gnu 4$, $\gnu 5$,  and $\gnu 6$ were only known, it was not clear whether $\lim_{n\to\infty} \grho(n)$ is zero or not. But now, based on  experience with generating sets of $\Equ n$ and Table~\ref{tablestat}, we risk formulating the following conjecture.

\begin{conjecture} The set $\set{\grho(n): 4\leq n\in\NN}$ has a positive lower bound.
\end{conjecture}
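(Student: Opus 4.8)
Since $0<\grho(n)\le 1$ for every $n\ge 4$, the Conjecture is equivalent to the assertion that $\gnu n=\Theta\bigl(\binom{\Bell n}{4}\bigr)$, i.e.\ that a \emph{positive fraction} of the four-element subsets of $\Part n$ generate $\Part n$; as only finitely many values are at stake otherwise, this is purely a statement about the asymptotics of $\gnu n$. Note first that the lower bound of Proposition~\ref{proplwStmtsH} is of order $\exp\bigl((2+o(1))n\ln n\bigr)$, whereas $\binom{\Bell n}{4}=\exp\bigl((4+o(1))n\ln n\bigr)$; so the construction behind that proposition is \emph{super-exponentially} too wasteful (it fixes the combinatorial type of a single configuration and then only varies $\delta'$ and permutes), and a genuinely new idea is needed. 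The plan is therefore to show that the ``bad'' quadruples — those failing to generate — have vanishingly small relative density.

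The first step I would take is to reduce the problem to atoms. Because $\Part n$ is atomistic, $\sublat{\alpha,\beta,\gamma,\delta}=\Part n$ if and only if every atom $\kequ{i,j}$ lies in $\sublat{\alpha,\beta,\gamma,\delta}$. Hence, by a union bound over the $\binom n2$ atoms, it would suffice to prove that for a uniformly random quadruple $\tuple{\alpha,\beta,\gamma,\delta}\in\Part n^4$ and a \emph{fixed} pair $\set{i,j}$ one has $\Pr\bigl[\kequ{i,j}\notin\sublat{\alpha,\beta,\gamma,\delta}\bigr]=o(1/n^2)$; then a fixed positive fraction of all quadruples would generate $\Part n$, giving $\gnu n\geq c\binom{\Bell n}4$.

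For the per-pair estimate I see two natural routes. The combinatorial route exploits the well-understood shape of a random partition of $\set{1,\dots,n}$: with overwhelming probability it has about $n/\ln n$ blocks, almost all of size $O(\log n)$, and the four partitions are independent. One would try to locate, inside such a generic quadruple, an $O(\log n)$-element window carrying a sub-configuration isomorphic to (or reducible, via the Circle Principle, Lemma~\ref{lemmaHamilt}, to) a piece of Z\'adori's configuration, and then ``transport'' the identification $\kequ{i,j}$ in from an already-collapsed pair using the \emph{terms going to the right} and \emph{to the left} from the proof of Lemma~\ref{lemmaZoddxzG}. The algebraic route would instead describe the bad set $B_{ij}=\set{\tuple{\alpha,\beta,\gamma,\delta}:\kequ{i,j}\notin\sublat{\alpha,\beta,\gamma,\delta}}$ as a union of a controlled number of ``degenerate loci'', each cut out by the failure of finitely many join/meet relations among the four partitions on small windows, and bound the measure of each locus by a first-moment computation.

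The hard part — and, I expect, the reason this appears only as a conjecture — will be the per-pair probability estimate itself. Unlike the classical situation of generating a symmetric group by random elements, there is currently no machinery for deciding whether four \emph{random} partitions generate $\Part n$; moreover $\sublat{\alpha,\beta,\gamma,\delta}$ need not be stable under small local perturbations of a single coordinate, so a crude ``robustness of one good configuration'' argument again loses a factor of order $\exp\bigl(\Omega(n\ln n)\bigr)$ and cannot survive the union bound over the $\binom n2$ atoms. Making either the embedded-configuration argument or the degenerate-locus argument quantitative enough to beat that union bound is the crux, and any of the auxiliary estimates above (in the spirit of \eqref{eqTrzRsslbRsnkbnTh}, but now \emph{generic} rather than \emph{explicit}) would be the decisive ingredient.
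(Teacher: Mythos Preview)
The paper does not prove this statement. It is explicitly labelled a \emph{Conjecture}, introduced with the words ``we risk formulating the following conjecture'' and supported only by the computational and statistical evidence of Tables~\ref{tableexact} and~\ref{tablestat}. There is no proof in the paper to compare your proposal against.

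Your proposal is not a proof either, and to your credit you say so plainly: you sketch two possible strategies (locating Z\'adori-type subconfigurations inside a generic quadruple, or bounding the measure of ``degenerate loci'') and then identify the per-pair probability estimate as the missing ingredient that you cannot supply. That diagnosis is accurate and matches the paper's own stance that this is open. Your observation that the explicit lower bound \eqref{eqTrzRsslbRsnkbnTh} is of order $\exp\bigl((2+o(1))n\ln n\bigr)$ while $\binom{\Bell n}{4}$ is of order $\exp\bigl((4+o(1))n\ln n\bigr)$, so that Proposition~\ref{proplwStmtsH} is super-exponentially far from what the conjecture would require, is correct and worth recording; the paper does not make this comparison explicit.

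In short: there is no gap to name because there is no claimed proof on either side. What you have written is a reasonable research outline, not a proof, and the paper offers nothing stronger.
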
 

It would be too early to formulate the rest of our feelings as a conjecture, so we formulate it as an open problem. 
\begin{problem} Is it true that  
\[
\grho(6)>\grho(7)\leq \grho(8)\leq \grho(9)\leq \grho(10)\leq \grho(11)\leq\dots
\] 
and, for all $4\leq n\in \NN$, $\grho(n)\geq 3/200$? Note that we already know from Table~\ref{tableexact}  that $\grho(4)> \grho(5)> \grho(6)>3/200$.
\end{problem}

Since $\grho(6)$  is exactly known, our confidence in $\grho(6)>\grho(7)$ at least $0.999$. However, we are not really confident in, say, $\grho(8)\leq \grho(9)$ even if this inequality is more likely to hold than to fail.

\bigskip

\begin{table}
\[
\vbox{\tabskip=0pt\offinterlineskip
\halign{\strut#&\vrule#\tabskip=4pt plus 2pt&
#\hfill& \vrule\vrule\vrule#&
\hfill#&\vrule#&
\hfill#&\vrule#&
\hfill#&\vrule\tabskip=0.1pt#&
#\hfill\vrule\vrule\cr
\vonal\vonal\vonal\vonal
&&\hfill$n$&&$\,7$&&$\,8$&&$\,9$&
\cr\vonal\vonal
&&\hfill$\displaystyle{{\Bell n}\choose 4}$&&$24\,480\,029\,875$&&$12\,222\,513\,708\,615$&&$8\,330\,299\,023\,110\,190$&
\cr\vonal
\vonal\vonal\vonal
}} 
\]
\caption{The number of four-element subsets of $\Part n$ for $n\in\set{7,8,9}$}\label{tablehnyklNc}
\end{table}

\begin{table}
\[
\vbox{\tabskip=0pt\offinterlineskip
\halign{\strut#&\vrule#\tabskip=4pt plus 2pt&
#\hfill& \vrule\vrule\vrule#&
\hfill#&\vrule#&
\hfill#&\vrule#&
\hfill#&\vrule\tabskip=0.1pt#&
#\hfill\vrule\vrule\cr
\vonal\vonal\vonal\vonal
&&\hfill$n$&&$\,4$&&$\,5$&&$\,6$&
\cr\vonal\vonal
&&\hfill$\displaystyle{{\Bell n}\choose 4}$&&$1\,365$&&$270\,725$&&$68\,685\,050$&
\cr\vonal
&&$\hfill\gnu n$&&$50$&&$5\,305$&&$1\,107\,900$&
\cr\vonal
&&\hfill \%, i.e., $100\grho(n)$&&$3.663003663$&&$1.959553052$&&$1.613014768$&
\cr\vonal
&&\hfill computer time&&$0.11$sec&&$68$ sec&&$38$ hours&
\cr\vonal
\vonal\vonal\vonal
}} 
\]
\caption{The (exact) number $\gnu n$ of the four-element generating sets of $\Equ n$ for $n\in\set{4,5,6}$}\label{tableexact}
\end{table}

\begin{table}
\[
\vbox{\tabskip=0pt\offinterlineskip
\halign{\strut#&\vrule#\tabskip=2pt plus 2pt&
#\hfill& \vrule\vrule\vrule#&
\hfill#&\vrule#&
\hfill#&\vrule#&
\hfill#&\vrule#&
\hfill#&\vrule#&
\hfill#&\vrule#&
\hfill#&\vrule\tabskip=0.1pt#&
#\hfill\vrule\vrule\cr
\vonal\vonal\vonal\vonal
&&\hfill$n$&&$\,4$&&$\,5$&&$\,6$&&$\,7$&&$8$&&$9$&
\cr\vonal
&&\hfill$100\grho(n) $&&$3.6630037$&&$1.9595531$&&$1.613014768$&&$\phantom x$&&$\phantom x$&&$\phantom x$&
\cr\vonal\vonal\vonal\vonal
&&\hfill$k$&&$10\,000\,000$&&$10\,000\,000$&&$10\,000\,000$&&$15\,000\,000$&&$500\,000$&&$25\,000$&
\cr\vonal
&&\hfill time&&$8$ minutes&&$27$ min&& \hfill$\text{3h+33min}$&&$102$ hours&&$95$ h&&$166$ h&
\cr\vonal
&&\hfill $s$&&$367\,221$&&$196\,243$&&$161\,768$&&$238\,223$&&$8\,244$&&$438$&
\cr\vonal
&&$\hfill 100\ogrho(n)$&&$3.67221$&&$1.96243$&&$1.61768$&&$1.58815$&&$1.64880$&&$1.75200$&
\cr\vonal\vonal\vonal\vonal
\isitneeded{
&&$0.800_\ast$&&$3.66459$&&$1.95681$&&$1.61257$&&$1.58402$&&$1.62572$&&$1.64566$&
\cr\vonal
&&$0.800^\ast$&&$3.67983$&&$1.96805$&&$1.62279$&&$1.59229$&&$1.67188$&&$1.85834$&
\cr\vonal\vonal\vonal\vonal
}
&&$0.900_\ast$&&$3.66243$&&$1.95522$&&$1.61112$&&$1.58284$&&$1.61918$&&$1.61551$&
\cr\vonal
&&$0.900^\ast$&&$3.68199$&&$1.96964$&&$1.62424$&&$1.59346$&&$1.67842$&&$1.88849$&
\cr\vonal\vonal\vonal\vonal
&&$0.950_\ast$&&$3.66055$&&$1.95383$&&$1.60986$&&$1.58183$&&$1.61350$&&$1.58936$&
\cr\vonal
&&$0.950^\ast$&&$3.68387$&&$1.97103$&&$1.62550$&&$1.59448$&&$1.68410$&&$1.91464$&
\cr\vonal\vonal\vonal\vonal
&&$0.990_\ast$&&$3.65689$&&$1.95113$&&$1.60740$&&$1.57984$&&$1.60241$&&$1.53826$&
\cr\vonal
&&$0.990^\ast$&&$3.68753$&&$1.97373$&&$1.62796$&&$1.59647$&&$1.69519$&&$1.96574$&
\cr\vonal\vonal\vonal\vonal
&&$0.999_\ast$&&$3.65264$&&$1.94800$&&$1.60455$&&$1.57753$&&$1.58954$&&$1.47896$&
\cr\vonal
&&$0.999^\ast$&&$3.69178$&&$1.97686$&&$1.63081$&&$1.59877$&&$1.70806$&&$2.02504$&
\cr\vonal
\vonal\vonal\vonal
}} 
\]
\caption{Statistics with $k$ experiments that yielded $s$  many 4-element generating sets of $\Part n$ for $n\in\set{4,\dots,9}$}\label{tablestat}
\end{table}%

\end{document}